\documentclass[12pt]{amsart}


\usepackage[mathscr]{eucal}
\usepackage{amsmath,amssymb,amscd,amsthm}
\usepackage{color}
\usepackage{epsfig}
\newtheorem{theorem}{Theorem}

\newtheorem{definition}{Definition}
\newtheorem{lemma}{Lemma}
\newtheorem{cor}{Corollary}
\newtheorem{proposition}{Proposition}
\theoremstyle{definition}
\newtheorem{remark}{Remark}

\theoremstyle{plane}

\def \beq{ \begin{equation} }
\def \eeq{\end{equation}}
\def  \q {{\bf q}}
\def \snorm{\sqrt{\q_i\cdot \q_i}\sqrt{\q_j\cdot\q_j}}
\def \hnorm{\sqrt{-\q_i\boxdot \q_i}\sqrt{-\q_j\boxdot\q_j}}

\addtolength{\textwidth}{1.3cm}

\title{The $n$-body problem in spaces of constant curvature}
\begin{document}
\maketitle
\markboth{F. Diacu, E. P\'erez-Chavela, and M. Santoprete}{The $n$-Body Problem in
Spaces of Constant Curvature}
\author{\begin{center}
Florin Diacu\\
\smallskip
{\footnotesize Pacific Institute for the Mathematical Sciences\\
and\\
Department of Mathematics and Statistics\\
University of Victoria\\
P.O.~Box 3060 STN CSC\\
Victoria, BC, Canada, V8W 3R4\\
diacu@math.uvic.ca\\
}\end{center}

\begin{center}
Ernesto P\'erez-Chavela\\
\smallskip
{\footnotesize
Departamento de Matem\'aticas\\
Universidad Aut\'onoma Metropolitana-Iztapalapa\\
Apdo.\ 55534, M\'exico, D.F., M\'exico\\
epc@xanum.uam.mx\\
}\end{center}

\begin{center}
Manuele Santoprete\\
\smallskip
{\footnotesize Department of Mathematics\\
Wilfrid Laurier University\\
75 University Avenue West,\\
Waterloo, ON, Canada, N2L 3C5.\\
msantopr@wlu.ca\\
}\end{center}
}

\vskip0.5cm

\begin{center}
\today
\end{center}

\bigskip

\begin{abstract}
{We generalize the Newtonian $n$-body problem to spaces of curvature 
$\kappa={\rm constant}$, and study the motion in the 2-dimensional case. For $\kappa>0$, the equations of motion encounter non-collision singularities, which occur when two bodies are antipodal. This phenomenon leads, on one hand, to hybrid solution singularities for as few as 3 bodies, whose corresponding orbits end up in a collision-antipodal configuration in finite time; on the other hand, it produces
non-singularity collisions, characterized by finite velocities and forces
at the collision instant. We also point out the existence of several classes of relative equilibria, including the hyperbolic rotations for $\kappa<0$. In the end, we prove Saari's conjecture when the bodies are on a geodesic that rotates elliptically 
or hyperbolically. We also emphasize that fixed points are specific to the case 
$\kappa>0$, hyperbolic relative equilibria to $\kappa<0$, and Lagrangian orbits of arbitrary masses to $\kappa=0$---results that provide new criteria towards understanding the large-scale geometry of the physical space.}
\end{abstract}

\newpage

\tableofcontents

\section{Introduction}



The goal of this paper is to extend the Newtonian $n$-body problem of celestial
mechanics to spaces of constant curvature. Though attempts of this kind existed 
for two bodies in the 19th century, they faded away after the birth of special and general relativity, to be resurrected several decades later, but only in the case $n=2$. As we will further argue, the topic we are opening here is important for understanding particle dynamics in spaces other than Euclidean, for shedding some new light on the classical case, and perhaps helping us understand the nature of the physical space.


\subsection{History of the problem}

The first researcher who took the idea of gravitation beyond ${\bf R}^3$ was Nikolai Lobachevsky. In 1835, he proposed a Kepler problem in the 3-dimensional hyperbolic space, ${\bf H}^3$, by defining an attractive force proportional to the inverse area of the 2-dimensional sphere of the same radius as the distance between bodies, \cite{Lob}. Independently of him, and at about the same time, J\'anos Bolyai came up with a similar idea, \cite{Bol}. 

These co-discoverers of the first non-Euclidean geometry had no followers in their pre-relativistic attempts until 1860, when Paul Joseph Serret\footnote{Paul Joseph Serret (1827-1898) should not be confused with another French mathematician, Joseph Alfred Serret (1819-1885), known for the Frenet-Serret formulas of vector calculus.} extended the gravitational force to the sphere ${\bf S}^2$ and solved the corresponding Kepler problem, \cite{Ser}. Ten years later, Ernst Schering revisited Lobachevsky's law for which he obtained an analytic expression given by the
cotangent potential we study in this paper, \cite{Sche}. Schering also wrote that Lejeune Dirichlet had told some friends to have dealt with the same problem during his last years in Berlin\footnote{This must have happened around 1852, as
claimed by Rudolph Lipschitz, \cite{Lip72}.}, \cite{Sche1}. In 1873, Rudolph 
Lipschitz considered the same problem in ${\bf S}^3$, but defined a potential proportional to $1/\sin({r/R})$, where $r$ denotes the distance between bodies and $R$ is the curvature radius, \cite{Lip}. He obtained the general solution of this problem in terms of elliptic functions, but his failure to provide an explicit formula invited new approaches. 

In 1885, Wilhelm Killing adapted Lobachevsky's idea to ${\bf S}^3$ and defined an 
extension of the Newtonian force given by the inverse area of a 2-dimensional sphere (in the spirit of Schering), for which he proved a generalization of Kepler's three laws, \cite{Kil10}. Another contributor was Heinrich Liebmann,\footnote{Although he signed his works as Heinrich Liebmann, his full name was Karl Otto Heinrich Liebmann (1874-1939). He did most of his work in Heidelberg and Munich.}. In 1902, he showed that the orbits of the two-body problem are
conics in ${\bf S}^3$ and ${\bf H}^3$ and generalized Kepler's three laws to
$\kappa\ne 0$, \cite{Lie1}. One year later, Liebmann proved ${\bf S}^2$- and ${\bf H}^2$-analogues of Bertrand's theorem, \cite{Ber}, \cite{Win}, which states that there exist only two analytic central potentials in the Euclidean space for which all bounded orbits are closed, \cite{Lie2}. He also summed up his results in a book published in 1905, \cite{Lie3}.

Unfortunately, this direction of research was neglected in the decades following 
the birth of special and general relativity. Starting with 1940, however, Erwin 
Schr\"odinger developed a quantum-mechanical analogue of the Kepler problem 
in ${\bf S}^2$, \cite{Schr}. Schr\"odinger used the same cotangent potential of
Schering and Liebmann, which he deemed to be the natural extension of
Newton's law to the sphere\footnote{``The correct form of [the] potential (corresponding to $1/r$ of the flat space) is known to be $\cot\chi$,'' \cite{Schr}, p.~14.}. Further results in this direction were obtained by 
Leopold Infeld, \cite{Inf}, \cite{Ste}. In 1945, Infeld and his student Alfred Schild extended this problem to spaces of constant negative curvature using a potential given by the hyperbolic cotangent of the distance. A list of the above-mentioned works also appears in \cite{Sh}, except for Serret's book, \cite{Ser}.
A bibliography of works about mechanical problems in spaces
of constant curvature is given in \cite{Shch2}.

Several members of the Russian school of celestial mechanics, including Valeri Kozlov and Alexander Harin, \cite{Koz}, \cite{Koz2}, Alexey Borisov, Ivan Mamaev, and Alexander Kilin, \cite{Bor}, \cite{Bor1}, \cite{Bor2}, \cite{Bor3},  \cite{Kilin}, Alexey Shchepetilov, \cite{Shc}, \cite{Shc1}, \cite{Shch2}, and Tatiana Vozmischeva, \cite{Voz}, revisited the idea of the cotangent potential for the 2-body problem and considered related problems in spaces of constant curvature starting with the 1990s. The main reason for which Kozlov and Harin supported this approach was mathematical. They pointed out, as Schering, Liebmann, Schr\"odinger, Infeld,
and others had insisted earlier, that  (i) the classical one-body problem satisfies Laplace's equation (i.e.~the potential is a harmonic function), which also means that the equations of the problem are equivalent with those of the harmonic oscillator; (ii) its potential generates a central field in which all bounded orbits are closed---according to Bertrand's theorem. Then they showed that the cotangent potential is the only one that satisfies these properties in spaces of constant curvature and has, at the same time, meaning in celestial mechanics. The results they obtained support the idea that the cotangent potential is, so far, the best
extension found for the Newtonian potential to spaces of nonzero constant 
curvature. Our paper brings new arguments that support this view.

The latest contribution to the case $n=2$ belongs to Jos\'e Cari\~nena, Manuel Ra\~nada, and Mariano Santander, who provided a unified approach in the framework of differential geometry, emphasizing the dynamics of the cotangent potential in ${\bf S}^2$ and ${\bf H}^2$, \cite{Car} (see also \cite{Car2}, \cite{Gut}). They also proved that, in this unified context, the conic orbits known in Euclidean space extend naturally to spaces of constant curvature, in agreement with the results obtained by Liebmann, \cite{Sh}.

\subsection{Relativistic $n$-body problems}


Before trying to approach this problem with contemporary tools, we were 
compelled to ask why the direction of research proposed by Lobachevsky was
neglected after the birth of relativity. Perhaps this phenomenon occurred because relativity hoped not only to answer the questions this research direction had asked, but also to regard them from a better perspective than classical mechanics, whose days seemed to be numbered. Things, however, didn't turn out this way. Research on the classical Newtonian $n$-body problem continued and even flourished in the decades to come, and the work on the case $n=2$ in spaces of constant curvature was revived after several decades. But how did relativity fare with respect to this fundamental problem of any gravitational theory?

Although the most important success of relativity was in cosmology and related fields, there were attempts to discretize Einstein's equations and define an $n$-body problem. Reamrkable in this direction were the contributions of Jean Chazy, \cite{Cha}, Tullio Levi-Civita, \cite{Civ}, \cite{Civita}, Arthur Eddington, \cite{Edd}, Albert Einstein, Leopold 
Infeld\footnote{A vivid description of the collaboration between Einstein and Infeld appears in \cite{Infeld}.}, and Banesh Hoffmann, \cite{Ein}, and Vladimir Fock, \cite{Fock}. Subsequent efforts led to refined post-Newtonian approximations (see, e.g., \cite{Soff1}, \cite{Soff2}, \cite{Soff3}), which prove useful in practice, from understanding the motion of artificial satellites---a field with applications in geodesy and geophysics---to using the Global Positioning System (GPS), \cite{Soff4}.

But the equations of the $n$-body problem derived from relativity prove complicated even for $n=2$, and they are not prone to analytical studies similar to the ones done in the classical case. This is probably the reason why the need of some simpler
equations revived the research on the motion of two bodies in spaces of
constant curvature.

Nobody, however, considered the general $n$-body problem\footnote{One of us (Erensto P\'erez-Chavela), together with his student Luis Franco-P\'erez, recently analyzed a restricted 3-body problem in ${\bf S}^1$, \cite{Fra}, in a more restrained context than the one we provide here.} for $n\ge 3$. The lack of developments in this direction may again rest with the complicated form the equations of motion take if one starts from the idea of defining the potential in terms of the intrinsic distance in the framework of differential geometry. Such complications might have discouraged all the attempts to generalize the problem to more than two bodies.

\subsection{Our approach}

The present paper overcomes the above-mentioned difficulties encountered in defining a meaningful $n$-body problem prone to the same mathematical depth achieved in the classical case, by replacing the differential-geometric approach 
used for $n=2$ in the case of the cotangent potential with the variational method of constrained Lagrangian dynamics. Also, the technical complications that arise in understanding the motion within the standard models of the Bolyai-Lobachevsky plane (the Klein-Beltrami disk, the Poincar\'e upper-half-plane, and the Poincar\'e disk) are bypassed through the less known Weierstrass hyperboloidal model (see Appendix), which often provides analogies with the results we obtain in the spherical case. This model also allows us to use hyperbolic rotations---a class of 
isometries---to put into the evidence some unexpected solutions of the equations of motion.


The history of the problem shows that there is no unique way of extending the classical idea of gravitation to spaces of constant curvature, but that the
cotangent potential is the most natural candidate. Therefore we take this potential 
as a starting point of our approach, though some of our results---as for example
Saari's conjecture in the geodesic case---do not use this potential explicitly, only 
its property of being a homogenous function of degree zero. 

Our generalization recovers the Newtonian law when the curvature is zero. Moreover, it provides a unified context, in which the potential varies continuously with the curvature $\kappa$. The same continuity  occurs for the basic results when the curvature tends to zero. For instance, the set of closed orbits of the Kepler problem on non-zero-curvature surfaces tends to the set of ellipses in the Euclidean plane when $\kappa\to 0$ (see, e.g., \cite{Car} or \cite{Lie1}).

\section{Summary of results}

\subsection{Equations of motion}

In Section 3, we extend the Newtonian potential of the $n$-body problem
to spaces of constant curvature, $\kappa$, for any finite dimension. For 
$\kappa\ne 0$, the potential turns out to be a homogeneous function of
degree zero. We also show the existence of an energy integral as well as 
of the integrals of the angular momentum. Like in general relativity, there 
are no integrals of the center of mass and linear momentum. But unlike in 
relativity, where---in the passage from continuous matter to discrete bodies---the 
fact that forces don't cancel at the center of mass leads to difficulties in defining
infinitesimal sizes for finite masses, \cite{Civ}, we do not encounter such 
problems here. We assume that the laws of classical mechanics hold for point
masses moving on manifolds, so we can apply the results of constrained
Lagrangian dynamics to derive the equations of motion. Thus two kinds
of forces act on bodies: (i) those given by the mutual interaction between
particles, represented by the gradient of the potential, and (ii) those that
occur due to the constraints, which involve both position and velocity terms.

\subsection{Singularities}

In Section 4 we focus on singularities, and distinguish between singularities 
of the equations of motion and solution singularities. For any $\kappa\ne 0$,
the equations of motion become singular at collisions, the same as in the
Euclidean case. The case $\kappa>0$, however, introduces some new singularities,
which we call antipodal because they occur when two bodies are at the opposite
ends of a diameter of the sphere.

The set of singularities is endowed with a natural dynamical structure. When the motion of three bodies takes place along a geodesic, solutions close to binary collisions and away from antipodal singularities end up in collision,
so binary collisions are attractive. But antipodal singularities are repulsive
in the sense that no matter how close two bodies are to an antipodal singularity,
they never reach it if the third body is far from a collision with any of them.

Solution singularities arise naturally from the question of existence and uniqueness 
of initial value problems. For nonsingular initial conditions, standard results of the theory of differential equations ensure local existence and uniqueness of an analytic solution defined in some interval $[0,t^+)$. This solution can be analytically extended to an interval $[0,t^*)$, with $0<t^+\le t^*\le\infty$. If $t^*=\infty$, the solution is globally defined. If $t^*<\infty$, the solution is called singular and is said to have a singularity at time $t^*$. 

While the existence of solutions ending in collisions is obvious for any value of $\kappa$, the occurrence of other singularities is not easy to demonstrate. Nevertheless, we prove that some hybrid singular solutions exist in the 3-body problem with $\kappa>0$. These orbits end up in finite time in a collision-antipodal singularity.  Whether other types of non-collision singularities exist, like the pseudocollisions of the Euclidean case, remains an open question. The main reason why this problem is not easy to answer rests with the nonexistence of the center-of-mass integrals.

Another class of solutions connected to collision-antipodal configurations is
particularly interesting. We show that, for $n=3$, there are orbits that reach such a
configuration at some instant $t^*$ but remain analytic at this point because the
forces and the velocities involved remain finite at $t^*$. Such a motion
can, of course, be analytically continued beyond $t^*$. This is the first example
of a natural non-singularity collision.  

\subsection{Relative equilibria}

The rest of this paper, except for the Appendix, focuses on the results we obtained 
in ${\bf S}^2$ and ${\bf H}^2$, mainly because these two surfaces are representative for the cases $\kappa>0$ and $\kappa<0$, respectively. Indeed, the results we proved for these surfaces can be extended to different curvatures of the same sign by a mere change of factor. 

Sections 5 and 6 deal with relative equilibria in ${\bf S}^2$ and ${\bf H}^2$. 
In ${\bf S}^2$ we only have  elliptic relative equilibria. Instead, the relative 
equilibria in ${\bf H}^2$ are of two kinds: elliptic relative equilibria, generated by elliptic rotations, and hyperbolic relative equilibria, generated by hyperbolic rotations (see Appendix). Parabolic relative equilibria, generated by parabolic rotations, do not exist. 

Some of the results we obtain in ${\bf S}^2$ have analogues in ${\bf H}^2$;
others are specific to each case. Theorems \ref{ngonS} and \ref{ngonH}, for 
instance, are dual to each other, whereas Theorem \ref{fix} takes place only
in ${\bf S}^2$.  The latter identifies a class of fixed points of the equations of motion. More precisely, we prove that if an odd number $n$ of equal masses are placed, initially at rest, at the vertices of a regular $n$-gon inscribed in a great circle, then the bodies won't move. The same is true for four equal masses placed at the vertices of a regular tetrahedron inscribed in ${\bf S}^2$, but---due to the occurrence of antipodal singularities---fails to hold for the other regular polyhedra: octahedron (6 bodies), cube (8 bodies), dodecahedron (12 bodies), and icosahedron (20 bodies), as well as in the case of geodesic $n$-gons with an even number of bodies.

Theorem \ref{nofixS} shows that there are no fixed points for $n$ bodies within 
any hemisphere of ${\bf S}^2$. Its hyperbolic analogue, stated in Theorem \ref{nofixH}, proves the nonexistence of fixed points in ${\bf H}^2$. These two results are in agreement with the Euclidean case in the sense that the $n$-body problem has no fixed points within distances, say, not larger than the ray of the visible universe.

It is also natural to ask whether fixed points can generate relative equilibria.
Theorem \ref{fixrel} shows that if $n$ masses $m_1,m_2,
\dots, m_n$ lie initially on a great circle of ${\bf S}^2$ such that the
mutual forces are in equilibrium, then any uniform rotation applied to
the system generates a relative equilibrium.

Theorem \ref{rengon} states that the only way to generate an elliptic relative equilibrium from an initial $n$-gon configuration taken on a great circle, as in Theorem \ref{fix}, is to assign suitable velocities in the plane of the $n$-gon. So 
a regular polygon of this kind can rotate only in a plane orthogonal to the rotation axis. 

Theorem \ref{ngonS} and its hyperbolic analogue, Theorem \ref{ngonH}, show
that $n$-gons of any admissible size can rotate on the same circle, both in ${\bf S}^2$ and ${\bf H}^2$. Again, these results agree with the Euclidean 
case. But something interesting happens with the equilateral (Lagrangian) solutions. Unlike in Euclidean space, elliptic relative equilibria moving in the same plane
of ${\bf R}^3$ can be generated only when the masses move on the same circle and are therefore equal, as we prove in Theorems \ref{lagranS} and \ref{lagranH}. Thus Lagrangian solutions with unequal masses are specific to the Euclidean case.

Theorems \ref{regeo3} and \ref{regeo3H} show that analogues to the collinear (Eulerian) orbits in the 3-body problem of the classical case exist in ${\bf S}^2$ and ${\bf H}^2$, respectively. While nothing surprising happens in ${\bf H}^2$, where
we prove the existence of such solutions of any size, an interesting phenomenon 
takes place in ${\bf S}^2$. Assume that one body lies on the rotation axis (which 
contains one height of the triangle), while the other two are at the opposite ends
of a rotating diameter on some non-geodesic circle of ${\bf S}^2$. Then 
elliptic relative equilibria exist while the bodies are at initial positions 
within the same hemisphere. When the rotating bodies are placed on
the equator, however, they encounter an antipodal singularity. Below the equator, solutions exist again until the bodies form an equilateral triangle. By Theorem \ref{rengon}, any $n$-gon with an odd number of sides can rotate only in its own plane, so the (vertical) equilateral triangle is a fixed point but cannot lead to an elliptic relative equilibrium. If the rotating bodies are then placed below the equilateral position, solutions fail to exist. But the masses don't have to be all
equal. Eulerian solutions exist if, say, the non-rotating body has mass $m$ and
the other two have mass $M$. If $M\ge 4m$, these orbits occur for all
$z\ne 0$. Again, these results prove that, as long as we do not exceed reasonable distances, such as the ray of the visible universe, the behavior of elliptic relative equilibria lying on a rotating geodesic is similar to the one of Eulerian solutions in the Euclidean case.

We then study hyperbolic relative equilibria around a point and along a (usually non-geodesic) hyperbola. Theorem \ref{noreH} proves that such orbits do not exist on fixed geodesics of ${\bf H}^2$, so the bodies cannot chase each other along a geodesic while maintaining the same initial distances. But Theorem \ref{hyp}  proves the existence of hyperbolic relative equilibria in ${\bf H}^2$ for three equal masses. The bodies move along hyperbolas of the hyperboloid that models ${\bf H}^2$ remaining all the time on a moving geodesic and maintaining the initial
distances among themselves. These orbits rather resemble fighter planes flying in 
formation than celestial bodies moving under the action of gravity alone. The
result also holds if the mass in the middle differs from the other two.
The last result of this section, Theorem \ref{thpar}, shows that parabolic relative equilibria do not exist.

\subsection{Saari's conjecture}

Our extension of the Newtonian $n$-body problem to spaces of constant curvature
also reveals new aspects of Saari's conjecture. Proposed in 1970 by Don Saari in
the Euclidean case, Saari's conjecture claims that solutions with constant moment of inertia are relative equilibria. This problem generated a lot of interest from the 
very beginning, but also several failed attempts to prove it. The discovery of the 
figure eight solution, which has an almost constant moment of inertia, and whose existence was proved in 2000 by Alain Chenciner and Richard Montgomery, \cite{Che}, renewed the interest in this conjecture. Several results showed up not long thereafter. The case $n=3$ was solved in 2005 by Rick Moeckel, \cite{Moe}; the collinear case, for any number of bodies and the more general potentials that involve only mutual distances, was settled the same year by the authors of this paper, \cite{Dia2}. Saari's conjecture is also connected to the Chazy-Wintner-Smale conjecture, \cite{Sma}, \cite{Win}, which asks to determine whether the number of central configurations is finite for $n$ given bodies in Euclidean space.

Since relative equilibria have elliptic and hyperbolic versions in ${\bf H}^2$, Saari's conjecture raises new questions for $\kappa<0$. We answered them in Theorem \ref{Saari} of Section 7, when the bodies are restrained to a geodesic that rotates elliptically or hyperbolically.

\bigskip

An Appendix in which we present some basic facts about the Weierstrass model of the hyperbolic plane, together with some historical remarks, closes our paper. We suggest that readers unfamiliar with this model take a look at the Appendix before getting into the technical details related to our results.

\subsection{Some physical remarks}

Does our gravitational model have any connection with the physical reality? Since there is no unique extension of the Newtonian $n$-body problem to spaces of constant curvature, is our generalization meaningful from the physical point of view or does it lead only to some interesting mathematical properties? 

We followed the tradition of the cotangent potential, which seems the most natural candidate. But since the debate on the nature of the physical space is open, the only way to justify this model is through mathematical results. As we will further argue, not only that the properties we obtained match the Euclidean ones, but they also provide a classical explanation of the cosmological scenario, in agreement with the basic conclusions of general relativity. 

But before getting into the physical aspect, let us remark that our model is based on mathematical principles, which lead to a meaningful physical interpretation. As we already mentioned, the cotangent potential preserves two fundamental properties: (i) it is harmonic for the one-body problem and (ii) it generates a central field in which all bounded orbits are closed. Other results that support the  cotangent potential are based on the idea of central (or gnomonic)  projection, \cite{App}. By taking the central projection on the sphere for the planar Kepler problem, Paul Appell obtained the cotangent potential. This idea can be generalized by projecting the planar Kepler problem to any surface of revolution, as one of us
(Manuele Santoprte) proved, \cite{Santoprete}.

In 1992, Kozlov and Harin showed that the only central potential that satisfies the fundamental properties (i) and (ii) in ${\bf S}^2$ and has meaning in celestial mechanics is the cotangent of the distance, \cite{Koz}. This fact had been known to Infeld for the quantum mechanical version of the potential, \cite{Inf}. But since any continuously differentiable and non-constant harmonic function attains no maximum or minimum on the sphere, the existence of two distinct singularities (the collisional and the antipodal---in our case) is not unexpected. And though a force that becomes infinite for points at opposite poles may seem counterintuitive in a gravitational framework, it explains the cosmological scenario. 

Indeed, while there is no doubt that $n$ point masses ejecting from a total collapse
would move forever in  spaces with $\kappa\le0$ for large initial
conditions, in agreement with general relativity, it is not clear 
what happens for $\kappa>0$. But the energy relation \eqref{enerS} shows that, in spherical space, the current expansion of the universe cannot last forever. For a fixed energy constant, $h$, the potential energy, $-U$, would become positive and very large if one or more pairs of particles were to come close to antipodal singularities. Therefore in a homogeneous universe, highly populated with
non-colliding particles, the system could never expand beyond the equator (assuming that the initial ejection took place at one pole). No matter how large (but fixed) the energy constant is, when the potential energy reaches the value $h$, the kinetic energy becomes zero, so all the particles stop simultaneously and the motion reverses.

Thus, for $\kappa>0$, the cotangent potential recovers the growth of the system to a maximum size  and the reversal of the expansion independently on the value of the energy constant. Without antipodal singularities, the reversal could take place only for certain initial conditions. This conclusion is reached without introducing a cosmological force and differently from how it was obtained in the classical model proposed by \'Elie Cartan, \cite{Cart1}, \cite{Cart2}, and shown by Frank Tipler to be as rigorous as Friedmann's cosmology, \cite{Tip1}, \cite{Tip2}.

Another result that suggests the validity of the cotangent potential is the nonexistence of fixed points. They don't show up in the Euclidean case, and neither do they appear in this model within the observable universe. The properties we proved for relative equilibria are also in agreement with the classical $n$-body problem, the only exception being the Lagrangian solutions for $\kappa\ne 0$, which, unlike in the Euclidean case, must have equal masses and move on the same circle. This distinction adds to the strength of the model because, even in the Euclidean case, the arbitrariness of the Lagrangian solutions is a peculiar property.
At least  two arguments support this point of view. 
First, relative equilibria generated from regular polygons, except the equilateral triangle, exist only if the masses are equal. The second argument is related to central configurations, which generate relative equilibria in the Euclidean case. One of us (Florin Diacu) proved that among attraction forces given by symmetric laws of masses, $\gamma(m_i,m_j)=\gamma(m_j,m_i)$,  equilateral central configurations with unequal masses occur only when $\gamma(m_i,m_j)= c\ \! m_im_j$, where $c$ is a nonzero constant, \cite{Diacu}. Since for $\kappa\ne 0$ relative equilibria are equilateral only if the masses are equal means that Lagrangian solutions of arbitrary masses characterize the Euclidean space.

Such orbits exist in nature, the best known example being the equilateral triangle formed by the Sun, Jupiter, and the Trojan asteroids. Therefore our result reinforces the fact that space is Euclidean within distances comparable to those of our solar system. This fact was not known during the time of Gauss, who apparently tried to determine the nature of space by measuring the angles of triangles having the vertices some tens of kilometers apart.\footnote{Arthur Miller argues that these experiments never took place, \cite{Mill}} Since we cannot measure the angles of cosmic triangles, our result opens up a new possibility. Any evidence of a Lagrangian solution involving galaxies (or clusters of galaxies) of unequal masses, could be used as an argument for the flatness of the physical space for distances comparable to the size of the triangle. Similarly, hyperbolic relative equilibria would show that space has negative curvature.


\section{Equations of motion}

We derive in this section a Newtonian $n$-body problem on surfaces of 
constant curvature. The equations of motion we obtain are simple enough 
to allow an analytic approach. At the end, we provide a straightforward 
generalization of these equations to spaces of constant curvature of any finite dimension. 


\subsection{Unified trigonometry} Let us first consider what, following \cite{Car}, we will call trigonometric $\kappa$-functions, which unify elliptical and hyperbolic
trigonometry. We define the 
$\kappa$-sine, ${\rm sn}_\kappa$, as
$$
{\rm sn}_{\kappa}(x):=\left\{
\begin{array}{rl}
{\kappa}^{-1/2}\sin{\kappa}^{1/2}x & {\rm if }\ \ \kappa>0\\
x & {\rm if }\ \ \kappa=0\\
({-\kappa})^{-{1/2}}\sinh({-\kappa})^{1/2}x & {\rm if }\ \ \kappa<0,
\end{array}  \right.
$$
the $\kappa$-cosine, ${\rm csn}_\kappa$, as
$$
{\rm csn}_{\kappa}(x):=\left\{
\begin{array}{rl}
\cos{\kappa}^{1/2}x & {\rm if }\ \ \kappa>0\\
1 & {\rm if }\ \ \kappa=0\\
\cosh({-\kappa})^{1/2}x & {\rm if }\ \ \kappa<0,
\end{array}  \right.
$$
as well as the $\kappa$-tangent, ${\rm tn}_\kappa$, and $\kappa$-cotangent,
${\rm ctn}_\kappa$, as 
$${\rm tn}_{\kappa}(x):={{\rm sn}_{\kappa}(x)\over {\rm csn}_{\kappa}(x)}\ \ \ {\rm and}\ \ \
{\rm ctn}_{\kappa}(x):={{\rm csn}_{\kappa}(x)\over {\rm sn}_{\kappa}(x)},$$
respectively. The entire trigonometry can be rewritten in this unified context,
but the only identity we will further need is the fundamental formula
$$
{\kappa}\ {\rm sn}_{\kappa}^2(x)+{\rm csn}_{\kappa}^2(x)=1.
$$


\subsection{Differential-geometric approach}

In a 2-dimensional Riemann space, we can define geodesic polar
coordinates, $(r,\phi)$, by fixing an origin and an oriented geodesic through it. 
If the space has constant curvature $\kappa$, the range of $r$ depends on $\kappa$; namely $r\in[0,{\pi/(2\kappa^{1/2})}]$ for $\kappa>0$ and $r\in[0,\infty)$ for 
$\kappa\le 0$; in all cases, $\phi\in[0,2\pi]$. The line element is given by
$$ds_{\kappa}^2=dr^2+{\rm sn}_{\kappa}^2(r)d\phi^2.$$
In ${\bf S}^2, {\bf R}^2$, and ${\bf H}^2$, the line element corresponds to $\kappa=1,0,$ and $-1$, respectively, and reduces therefore to 
$$ds_1^2=dr^2+(\sin^2 r)d\phi^2, \ \ \ ds_0^2=dr^2+r^2d\phi^2,\ \  {\rm and}\ \  ds_{-1}^2=dr^2+(\sinh^2 r)d\phi^2.$$

In \cite{Car}, the Lagrangian of the Kepler problem is defined as 
$$L_{\kappa}(r,\phi, v_r, v_{\phi})={1\over 2}[v_r^2+{\rm sn}_{\kappa}^2(r)v_{\phi}^2]+
U_{\kappa}(r),$$
where $v_r$ and $v_{\phi}$ represent the polar components of the velocity, and
$-U$ is the potential, where  
$$U_{\kappa}(r)=G\ {\rm ctn}_{\kappa}(r)$$ 
is the force function, $G>0$ being the gravitational constant. This means that the corresponding force functions in ${\bf S}^2, {\bf R}^2$, and ${\bf H}^2$ are, respectively,
$$U_1(r)={G\cot r}, \ \ \ U_0(r)={G r^{-1}},\ \ \  {\rm and} \ \ \ 
U_{-1}(r)={G\coth r}.$$


In this setting, the case $\kappa=0$ separates the potentials with $\kappa>0$
and $\kappa<0$ into classes exhibiting different qualitative behavior.
The passage from $\kappa>0$ to $\kappa<0$ through $\kappa=0$ takes place continuously. Moreover, the potential is spherically symmetric and satisfies
Gauss's law in a 3-dimensional space of constant curvature $\kappa$. This law
asks that the flux of the radial force field across a sphere of radius $r$ is
a constant independent of $r$. Since the area of the sphere is $4\pi sn_k^2(r)$,
the flux is $4\pi sn_k^2(r)\times{d\over dr}U_{\kappa}(r)$, so the potential 
satisfies Gauss's law. As in the Euclidean case, this generalized potential does
not satisfy Gauss's law in the 2-dimensional space. The results obtained in \cite{Car}
show that the force function $U_{\kappa}$ leads to the expected conic orbits on surfaces of constant curvature, and thus justify this extension of the Kepler problem to $\kappa\ne 0$.


\subsection{The potential}

To generalize the above setting of the Kepler problem to the $n$-body problem
on surfaces of constant curvature, let us start with some notations. Consider
$n$ bodies of masses $m_1,\dots,m_n$ moving on a surface of constant 
curvature $\kappa$. When $\kappa>0$, the surfaces are spheres of radii
${\kappa}^{-1/2}$ given by the equation $x^2+y^2+z^2=\kappa^{-1}$; for 
$\kappa=0$, we recover the Euclidean plane; and if $\kappa<0$, we consider
the Weierstrass model of hyperbolic geometry (see Appendix), which is devised
on the sheets with $z>0$ of the hyperboloids of two sheets 
$x^2+y^2-z^2={\kappa}^{-1}.$
The coordinates of the body of mass $m_i$ are given
by ${\bf q}_i=(x_i,y_i,z_i)$ and a constraint, depending on $\kappa$, 
that restricts the motion of this body to one of the above described surfaces. 

In this paper, ${\widetilde\nabla}_{{\bf q}_i}$ denotes either of the gradient operators
$$
\nabla_{{\bf q}_i}=(\partial_{x_i},\partial_{y_i},\partial_{z_i}),\  {\rm for}\ \ \kappa\ge0,\ \ {\rm or}\ \
{\overline\nabla}_{{\bf q}_i}=(\partial_{x_i},\partial_{y_i},-\partial_{z_i}),\  {\rm for}\ \ \kappa<0,
$$
with respect to the vector ${\bf q}_i$, and $\widetilde\nabla$ stands for the operator
$(\widetilde\nabla_{{\bf q}_1},\dots,\widetilde\nabla_{{\bf q}_n})$.
For ${\bf a}=(a_x,a_y,a_z)$ and ${\bf b}=(b_x,b_y,b_z)$ in ${\bf R}^3$, 
we define ${\bf a}\odot{\bf b}$ as either of the inner products
$${\bf a}\cdot{\bf b}:=(a_xb_x+a_yb_y+a_zb_z) \ \ {\rm for}\ \ \kappa\ge0,$$
$${\bf a}\boxdot{\bf b}:=(a_xb_x+a_yb_y-a_zb_z) \ \ {\rm for}\ \ \kappa<0,$$
the latter being the Lorentz inner product (see Appendix).
We also define ${\bf a}\otimes{\bf b}$ as either of the cross products
$${\bf a}\times{\bf b}:=(a_yb_z-a_zb_y, a_zb_x-a_xb_z, a_xb_y-a_yb_x) \ \ {\rm for}\ \ \kappa\ge0,$$
$${\bf a}\boxtimes{\bf b}:=(a_yb_z-a_zb_y, a_zb_x-a_xb_z, a_yb_x-a_xb_y)\ \ {\rm for}\ \ \kappa<0.$$

The distance between $\bf a$ and $\bf b$ on the surface of constant curvature $\kappa$ 
is then given by
$$
d_{\kappa}({\bf a},{\bf b}):=
\begin{cases}
\kappa^{-1/2}\cos^{-1}(\kappa{\bf a}\cdot{\bf b}),\ \ \ \ \ \ \ \ \  \kappa >0\cr
|{\bf a}-{\bf b}|, \ \ \ \ \ \ \ \ \ \ \ \ \ \ \ \ \ \ \ \ \ \ \  \! \ \hspace{2 pt} \kappa=0\cr
({-\kappa})^{-1/2}\cosh^{-1}(\kappa{\bf a}\boxdot{\bf b}),\hspace{4 pt} \kappa<0,\cr
\end{cases}
$$
where the vertical bars denote the standard Euclidean norm. In particular, the distances in ${\bf S}^2$ and ${\bf H}^2$ are 
$$
d_1({\bf a},{\bf b})=\cos^{-1}({\bf a}\cdot{\bf b}),\ \ 
d_{-1}({\bf a},{\bf b})=\cosh^{-1}(-{\bf a}\boxdot{\bf b}),
$$
respectively. Notice that $d_0$ is the limiting case of $d_\kappa$ when $\kappa\to 0$. Indeed, 
for both $\kappa>0$ and $\kappa<0$, the vectors $\bf a$ and $\bf b$ tend to infinity 
and become parallel, while the surfaces tend to an Euclidean plane, therefore the 
length of the arc between the vectors tends to the Euclidean distance.

We will further define a potential in ${\bf R}^3$ if $\kappa>0$, and in the $3$-dimensional Minkowski space $\mathcal{M}^3$ (see Appendix) if $\kappa<0$,
such that we can use a variational method to derive the equations of motion. For 
this purpose we need to extend the distance to these spaces. We do this by 
redefining the distance as
$$
d_{\kappa}({\bf a},{\bf b}):=
\begin{cases}
\kappa^{-1/2}\cos^{-1}{\kappa{\bf a}\cdot{\bf b}\over\sqrt{\kappa{\bf a}\cdot{\bf a}}
\sqrt{\kappa{\bf b}\cdot{\bf b}}},\ \ \ \ \ \ \ \ \! \ \ \kappa >0\cr
|{\bf a}-{\bf b}|, \ \ \ \ \ \ \ \ \ \ \ \ \ \ \ \ \ \ \ \ \ \ \! \ \hspace{0.82cm} \kappa=0\cr
({-\kappa})^{-1/2}\cosh^{-1}{\kappa{\bf a}\boxdot{\bf b}\over\sqrt{\kappa{\bf a}\boxdot{\bf a}}\sqrt{\kappa{\bf b}\boxdot{\bf b}}},\hspace{4.5 pt} \kappa<0.\cr
\end{cases}
$$
Notice that this new definition is identical with the previous one when we
restrict the vectors ${\bf a}$ and ${\bf b}$ to the spheres $x^2+y^2+z^2=\kappa^{-1}$
or the hyperboloids $x^2+y^2-z^2=\kappa^{-1}$, but is also valid for any
vectors $\bf a$ and $\bf b$  in ${\bf R}^3$
and $\mathcal{M}^3$, respectively.
 
From now on we will rescale the units such that the gravitational constant $G$
is $1$. We thus define the potential of the $n$-body problem as the function 
$-U_\kappa({\bf q})$, where
\begin{equation*}
U_\kappa({\bf q}):={1\over 2}\sum_{i=1}^n\sum_{j=1,j\ne i}^n{m_im_j{\rm ctn}_\kappa
({d}_\kappa({\bf q}_i,{\bf q}_j))}
\label{defpot}
\end{equation*}
stands for the force function, and ${\bf q}=({\bf q}_1,\dots, {\bf q}_n)$ is the configuration of the system.
Notice that ${\rm ctn}_0({d}_0({\bf q}_i,{\bf q}_j))=|{\bf q}_i-{\bf q}_j|^{-1}$, which means that we recover the Newtonian potential in the Euclidean case. 
Therefore the potential $U_\kappa$ varies continuously with the curvature $\kappa$. 

Now that we defined a potential that satisfies the basic continuity condition we 
required of any extension of the $n$-body problem beyond the Euclidean space, 
we will focus on the case $\kappa\ne0$. A straightforward computation shows that
\begin{equation}
U_\kappa({\bf q})={1\over 2}\sum_{i=1}^n\sum_{j=1,j\ne i}^n{m_im_j
(\sigma\kappa)^{1/2}{\kappa{\bf q}_i\odot{\bf q}_j\over\sqrt{\kappa{\bf q}_i
\odot{\bf q}_i}\sqrt{\kappa{\bf q}_j\odot{\bf q}_j}}\over
\sqrt{\sigma-\sigma\Big({\kappa{\bf q}_i\odot{\bf q}_j\over
\sqrt{\kappa{\bf q}_i
\odot{\bf q}_i}\sqrt{\kappa{\bf q}_j\odot{\bf q}_j}}\Big)^2}}, \ \ \kappa\ne 0,
\label{pothom}
\end{equation}
where 
$$
\sigma=
\begin{cases}
+1, \ \ {\rm for} \ \ \kappa>0\cr
-1, \ \ {\rm for} \ \ \kappa<0.\cr
\end{cases}
$$


\subsection{Euler's formula}

Notice that $U_{\kappa}(\eta{\bf q})=U_\kappa({\bf q})=\eta^0U_{\kappa}({\bf q})$
for any $\eta\ne 0$, which means that the potential is a homogeneous function of degree zero. But for ${\bf q}$ 
in ${\bf R}^{3n}$, homogeneous functions $F:{\bf R}^{3n}\to{\bf R}$ of degree $\alpha$ satisfy Euler's formula, ${\bf q}\cdot\nabla F({\bf q})=\alpha F({\bf q})$. With our notations, Euler's formula can be written as
${\bf q}\odot\widetilde\nabla{F({\bf q})}=\alpha F({\bf q})$.
Since $\alpha=0$ for $U_{\kappa}$ with $\kappa\ne 0$, we conclude that
\begin{equation}
{\bf q}\odot\widetilde\nabla U_{\kappa}({\bf q})=0.
\end{equation}
We can also write the force function as 
$U_\kappa({\bf q})={1\over 2}\sum_{i=1}^nU_\kappa^i({\bf q}_i)$, where
$$
U_\kappa^i({\bf q}_i):=
\sum_{j=1,j\ne i}^n{m_im_j(\sigma\kappa)^{1/2}{\kappa{\bf q}_i\odot{\bf q}_j\over\sqrt{\kappa{\bf q}_i\odot{\bf q}_i}\sqrt{\kappa{\bf q}_j\odot{\bf q}_j}}\over\sqrt{\sigma-\sigma\Big({\kappa{\bf q}_i\odot{\bf q}_j\over\sqrt{\kappa{\bf q}_i\odot{\bf q}_i}\sqrt{\kappa{\bf q}_j\odot{\bf q}_j}}\Big)^2}}, \ \ i=1,\dots,n,
$$ 
are also homogeneous functions of degree $0$. Applying Euler's formula for functions $F:{\bf R}^3\to{\bf R}$, we obtain that ${\bf q}_i\odot\widetilde\nabla_{{\bf q}_i} U_\kappa^i({\bf q})=0$. Then using the identity $\widetilde\nabla_{{\bf q}_i}U_\kappa({\bf q})=\widetilde\nabla_{{\bf q}_i} U_\kappa^i({\bf q}_i)$, we can conclude that 
\begin{equation}
{\bf q}_i\odot\widetilde\nabla_{{\bf q}_i} U_\kappa({\bf q})=0, \ \ i=1,\dots,n.
\label{eul}
\end{equation}


\subsection{Derivation of the equations of motion}

To obtain the equations of motion for $\kappa\ne 0$, we will use a variational
method applied to the force function (\ref{pothom}). The Lagrangian of the $n$-body system has the form
$$
L_\kappa({\bf q}, \dot{\bf q})=T_\kappa({\bf q}, \dot{\bf q})+U_\kappa({\bf q}),
$$
where $T_\kappa({\bf q},\dot{\bf q}):={1\over 2}\sum_{i=1}^nm_i(\dot{\bf q}_i\odot\dot{\bf q}_i)(\kappa{\bf q}_i\odot{\bf q}_i)$ is the kinetic energy of the system. 
(The reason for introducing the factors $\kappa{\bf q}_i\odot{\bf q}_i=1$ into the definition of the kinetic energy  will become clear in Section 3.8.)
Then, according to the theory of constrained Lagrangian dynamics  (see, e.g., \cite{Gel}), the equations of motion are
\begin{equation}
{d\over dt}\Bigg({\partial L_\kappa\over\partial\dot{\bf q}_i}\Bigg)-{\partial L_\kappa\over\partial{\bf q}_i}-\lambda_\kappa^i(t){\partial f_i\over\partial{\bf q}_i}={\bf 0},\ \ \ i=1,\dots,n,\label{eqLagrangianS2}
\end{equation}
where $f_\kappa^i={\bf q}_i\odot{\bf q}_i-{\kappa}^{-1}$ is the function that gives the constraint $f_\kappa^i=0$, which keeps the body of mass $m_i$ on the surface of constant curvature $\kappa$, and $\lambda_\kappa^i$ is the Lagrange multiplier corresponding to the same body. Since ${\bf q}_i\odot{\bf q}_i=\kappa^{-1}$ implies that 
$\dot{\bf q}_i\odot{\bf q}_i=0$, it follows that
$$
{d\over dt}\Bigg({\partial L_\kappa\over\partial\dot{\bf q}_i}\Bigg)=
m_i\ddot{\bf q}_i(\kappa{\bf q}_i\odot{\bf q}_i)+2 m_i(\kappa\dot{\bf q}_i\odot{\bf q}_i)=
m_i\ddot{\bf q}_i.
$$
This relation, together with
$$  {\partial L_\kappa\over\partial{\bf q}_i}=m_i\kappa(\dot{\bf q}_i\odot\dot{\bf q}_i){\bf q}_i+\widetilde\nabla_{{\bf q}_i} U_{\kappa}({\bf q}),
$$
implies that equations (\ref{eqLagrangianS2}) are equivalent to
\begin{equation}
m_i\ddot{\bf q}_i-m_i\kappa(\dot{\bf q}_i\odot\dot{\bf q}_i){\bf q}_i-\widetilde\nabla_{{\bf q}_i} U_{\kappa}({\bf q})-2\lambda_\kappa^i(t){\bf q}_i={\bf 0},\ \ \ i=1,\dots, n.\label{equations}
\end{equation}
To determine $\lambda_\kappa^i$, notice that
$0=\ddot{f}_\kappa^i=2\dot{\bf q}_i\odot\dot{\bf q}_i+
2({\bf q}_i\odot\ddot{\bf q}_i),$ so
\begin{equation}
{\bf q}_i\odot\ddot{\bf q}_i=-\dot{\bf q}_i\odot\dot{\bf q}_i.\label{h1}
\end{equation}
Let us also remark that $\odot$-multiplying equations (\ref{equations}) by 
${\bf q}_i$ and using (\ref{eul}), we obtain that
$$
m_i({\bf q}_i\odot\ddot{\bf q}_i)-m_i\kappa(\dot{\bf q}_i\odot\dot{\bf q}_i)-{\bf q}_i\odot\widetilde\nabla_{{\bf q}_i} U_{\kappa}({\bf q})=2\lambda_\kappa^i{\bf q}_i\odot{\bf q}_i=2\kappa^{-1}\lambda_\kappa^i,
$$
which, via (\ref{h1}), implies that $\lambda_\kappa^i=-\kappa m_i(\dot{\bf q}_i\odot\dot{\bf q}_i)$.
Substituting these values of the Lagrange multipliers into equations (\ref{equations}),
the equations of motion and their constraints become
\begin{multline}
m_i \ddot {\bf q}_i=\widetilde\nabla_{{\bf q}_i} U_{\kappa}({\bf q})-m_i\kappa(\dot{\bf q}_i\odot\dot{\bf q}_i){\bf q}_i, \ \ {\bf q}_i\odot{\bf q}_i=\kappa^{-1}, \ \ \kappa\ne 0,\\
 \ \ i=1,\dots, n.\label{eqmotion}
\end{multline}
The ${\bf q}_i$-gradient of the force function, obtained from (\ref{pothom}), has the
form
\begin{equation}
\widetilde\nabla_{{\bf q}_i} U_\kappa({\bf q})=\sum_{j=1,j\ne i}^n
{{m_im_j(\sigma\kappa)^{1/2}\left(\sigma\kappa{\bf q}_j  -\sigma{\kappa^2{\bf q}_i\odot{\bf q}_j\over\kappa{\bf q}_i\odot{\bf q}_i}{\bf q}_i \right)\over\sqrt{\kappa{\bf q}_i\odot{\bf q}_i}\sqrt{\kappa{\bf q}_j\odot{\bf q}_j}}
\over
\left[\sigma-\sigma\left({\kappa{\bf q}_i\odot{\bf q}_j}\over{\sqrt{\kappa{\bf q}_i\odot{\bf q}_i}\sqrt{\kappa{\bf q}_j\odot{\bf q}_j}}\right)^2\right]^{3/2}},\ \kappa\ne 0,
\label{gra}
\end{equation}
and using the fact that $\kappa{\bf q}_i\odot{\bf q}_i=1$, we can write this gradient as
\begin{equation}
\widetilde\nabla_{{\bf q}_i}U_\kappa({\bf q})=\sum_{j=1,j\ne i}^n{{m_im_j}(\sigma
\kappa)^{3/2}
\left[{\bf q}_j  - (\kappa{\bf q}_i\odot{\bf q}_j){\bf q}_i \right]
\over
\left[\sigma-\sigma\left(\kappa{\bf q}_i\odot{\bf q}_j\right)^2\right]^{3/2}}, \ \kappa\ne 0.
\label{grad}
\end{equation}
Sometimes we can use the simpler form \eqref{grad} of the gradient, but whenever
we need to exploit the homogeneity of the gradient or have to differentiate it, we must
use its original form \eqref{gra}. 
Thus equations (\ref{eqmotion}) and (\ref{gra}) describe
the $n$-body problem on surfaces of constant curvature for $\kappa\ne0$. 
Though more complicated than the equations of motion Newton derived for 
the Euclidean space, system (\ref{eqmotion}) is simple enough to allow an
analytic approach. Let us first provide some of its basic properties.


\subsection{First integrals}


The equations of motion have the energy integral
\begin{equation}
T_\kappa({\bf q},{\bf p})-U_\kappa({\bf q})=h,\label{energy}
\end{equation}
where, recall, $T_\kappa({\bf q}, {\bf p}):={1\over 2}\sum_{i=1}^nm_i^{-1}({\bf p}_i\odot{\bf p}_i)(\kappa{\bf q}_i\odot{\bf q}_i)$ is the kinetic energy,  ${\bf p}:=({\bf p}_1,\dots,{\bf p}_n)$ denotes the momentum of the $n$-body system, with ${\bf p}_i:=m_i\dot{\bf q}_i$ representing the momentum of the body of mass $m_i, i=1,\dots,n$, and $h$ is a real constant. Indeed, $\odot$-multiplying equations (\ref{eqmotion}) by $\dot{\bf q}_i$, we obtain 
$$
\sum_{i=1}^nm_i\ddot{\bf q}_i\odot\dot{\bf q}_i=
[\widetilde\nabla_{{\bf q}_i} U_\kappa({\bf q})]\odot\dot{\bf q}_i-
\sum_{i=1}^n{m_i\kappa}(\dot{\bf q}_i\odot\dot{\bf q}_i){\bf q}_i\odot{\dot{\bf q}_i}=
{d\over dt}U_\kappa({\bf q}(t)).
$$
Then equation (\ref{energy}) follows by integrating the first and last term in the above 
equation.


The equations of motion also have the integrals of the angular momentum,
\begin{equation}
\sum_{i=1}^n{\bf q}_i\otimes{\bf p}_i={\bf c},\label{angular} 
\end{equation}
where $\bf c$ is a constant vector. Relations (\ref{angular}) follow by integrating the identity formed by the first
and last term of the equations
\begin{multline}
\sum_{i=1}^nm_i\ddot{\bf q}_i\otimes{\bf q}_i=\sum_{i=1}^n\sum_{j=1,j\ne i}^n{m_im_j(\sigma\kappa)^{3/2}{\bf q}_i\otimes{\bf q}_j
\over
[\sigma-\sigma(\kappa{\bf q}_i\odot{\bf q}_j)^2]^{3/2}}\\
-\sum_{i=1}^n\left[\sum_{j=1,j\ne i}^n{m_im_j(\sigma\kappa)^{3/2}(\kappa{\bf q}_i\odot{\bf q}_j)
\over
[\sigma-\sigma(\kappa{\bf q}_i\odot{\bf q}_j)^2]^{3/2}}-
m_i{\kappa}(\dot{\bf q}_i\odot\dot{\bf q}_i)\right]{\bf q}_i\otimes{\bf q}_i
={\bf 0},
\end{multline}
obtained if $\otimes$-multiplying the equations of motion (\ref{eqmotion}) 
by ${\bf q}_i$. The last of the above identities follows from the skew-symmetry of 
$\otimes$ and the fact that  ${\bf q}_i\otimes{\bf q}_i={\bf 0}, \ i=1,\dots,n$.


\subsection{Motion of a free body}

A consequence of the integrals of motion is the analogue of the well known result from the Euclidean space related to the motion of a single body in the absence of any gravitational interactions. Though simple, the proof of this property is not as trivial as in the classical case.

\begin{proposition}
A free body on a surface of constant curvature is either at rest or it moves uniformly along a geodesic. Moreover, for $\kappa>0$, every orbit is closed.
\end{proposition}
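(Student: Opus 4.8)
The plan is to start from the equations of motion (\ref{eqmotion}) specialized to a single body with no gravitational interactions, so that $\widetilde\nabla_{{\bf q}}U_\kappa\equiv{\bf 0}$ and the system reduces to $\ddot{\bf q}=-\kappa(\dot{\bf q}\odot\dot{\bf q}){\bf q}$ subject to ${\bf q}\odot{\bf q}=\kappa^{-1}$. The first step is to prove that the speed is constant. Differentiating the constraint gives ${\bf q}\odot\dot{\bf q}=0$, and then ${d\over dt}(\dot{\bf q}\odot\dot{\bf q})=2\ddot{\bf q}\odot\dot{\bf q}=-2\kappa(\dot{\bf q}\odot\dot{\bf q})({\bf q}\odot\dot{\bf q})=0$, so $v^2:=\dot{\bf q}\odot\dot{\bf q}$ is a constant of the motion (this also follows from the energy integral (\ref{energy}) with $U_\kappa\equiv 0$).

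Next I would split into two cases according to the value of $v^2$. If $v^2=0$, then $\dot{\bf q}={\bf 0}$: for $\kappa>0$ the inner product $\odot=\cdot$ is positive definite, and for $\kappa<0$ every vector tangent to the hyperboloid is spacelike, so $\dot{\bf q}\boxdot\dot{\bf q}=0$ forces $\dot{\bf q}={\bf 0}$ (see Appendix); in either case $\ddot{\bf q}={\bf 0}$ and the body stays at rest. If $v^2\ne 0$, then because $v^2$ is constant the equation becomes the linear, constant-coefficient system $\ddot{\bf q}=-\kappa v^2{\bf q}$. Setting $\omega=(\kappa v^2)^{1/2}$ for $\kappa>0$ and $\mu=(-\kappa v^2)^{1/2}$ for $\kappa<0$, the solutions are ${\bf q}(t)={\bf q}(0)\cos\omega t+\omega^{-1}\dot{\bf q}(0)\sin\omega t$ and ${\bf q}(t)={\bf q}(0)\cosh\mu t+\mu^{-1}\dot{\bf q}(0)\sinh\mu t$, respectively.

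It then remains to recognize these orbits as uniform geodesic motion. The key observation is that the right-hand side of the reduced equation is parallel to ${\bf q}$, which is exactly the direction normal to the constraint surface (the $\odot$-gradient of $f_\kappa={\bf q}\odot{\bf q}-\kappa^{-1}$ is $2{\bf q}$, and every tangent vector ${\bf t}$ satisfies ${\bf q}\odot{\bf t}=0$). Hence the ambient acceleration $\ddot{\bf q}$ has vanishing tangential component, which is precisely the geodesic condition for a curve on a surface embedded in the flat ambient space; together with the constancy of $v$ established above, this shows the motion is uniform along a geodesic. One can confirm this concretely: using ${\bf q}(0)\odot{\bf q}(0)=\kappa^{-1}$, ${\bf q}(0)\odot\dot{\bf q}(0)=0$, and $\dot{\bf q}(0)\odot\dot{\bf q}(0)=v^2$, the two vectors ${\bf q}(0)$ and $\omega^{-1}\dot{\bf q}(0)$ (resp.\ $\mu^{-1}\dot{\bf q}(0)$) are $\odot$-orthogonal with the correct squared norms, so the orbit is the intersection of the surface with the plane they span through the origin---a great circle of ${\bf S}^2$ or a branch of a hyperbola of ${\bf H}^2$, and these are the geodesics. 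Finally, for $\kappa>0$ the explicit solution is periodic with period $2\pi/\omega$, so every nonconstant orbit is closed, which settles the last assertion.

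The main obstacle, and the reason this is ``not as trivial as in the classical case,'' is the presence of the velocity-dependent constraint force $-\kappa(\dot{\bf q}\odot\dot{\bf q}){\bf q}$ on the right-hand side: unlike the Euclidean equation $\ddot{\bf q}={\bf 0}$, here one cannot read off straight-line motion directly. The decisive point is therefore the first step---showing that $v^2$ is constant---since this is what linearizes the equation and lets the geodesic and periodicity conclusions follow; the secondary care needed is in the $\kappa<0$ case, where one must invoke the Lorentzian structure (that tangent vectors are spacelike and that ${\bf q}$ is the $\boxdot$-normal) to rule out degenerate behavior of the inner product.
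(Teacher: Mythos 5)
Your proof is correct, but it proceeds in a different order and with different tools than the paper's. The paper first argues the geodesic property qualitatively at $t=0$ (noting that $\ddot{\bf q}(0)$ is collinear with ${\bf q}(0)$, so the net initial force plus velocity "pulls the body along the geodesic"), and only afterwards establishes uniformity of the motion, which it derives from the angular momentum integrals by computing $c=({\bf q}\otimes\dot{\bf q})\odot({\bf q}\otimes\dot{\bf q})$ (remarking that the energy integral would also do). You invert this: you first prove $\dot{\bf q}\odot\dot{\bf q}$ is constant by differentiating it directly against the reduced equation and the constraint ${\bf q}\odot\dot{\bf q}=0$ -- a more elementary route than the angular momentum computation -- and then exploit this to turn $\ddot{\bf q}=-\kappa(\dot{\bf q}\odot\dot{\bf q}){\bf q}$ into a linear constant-coefficient system that you integrate explicitly. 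Identifying the resulting orbit as the intersection of the surface with the plane through the origin spanned by ${\bf q}(0)$ and $\dot{\bf q}(0)$ then gives the geodesic property rigorously (using the characterization of geodesics of ${\bf H}^2$ stated in the Appendix), and periodicity with period $2\pi/\omega$ gives closedness for $\kappa>0$. What each approach buys: the paper's is shorter and showcases the first integrals, but its geodesic step is essentially a heuristic direction argument at one instant; yours is longer but airtight on that point, handles the Lorentzian case explicitly (tangent vectors being spacelike, so $v^2=0$ forces $\dot{\bf q}={\bf 0}$), and produces the explicit solution and period as a bonus.
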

\begin{proof}
Since there are no gravitational interactions, the equations of motion
take the form
\begin{equation}
\ddot{\bf q}=-\kappa(\dot{\bf q}\odot\dot{\bf q}){\bf q},
\end{equation}
where ${\bf q}=(x,y,z)$ is the vector describing the position of the body of mass
$m$. If $\dot{\bf q}(0)={\bf 0}$, then $\ddot{\bf q}(0)={\bf 0}$, so no force acts on 
$m$. Therefore the body will be at rest.

If $\dot{\bf q}(0)\ne{\bf 0}$, $\ddot{\bf q}(0)$ and ${\bf q}(0)$ are collinear, having
the same sense if $\kappa<0$, but the opposite sense if $\kappa>0$. So the 
sum between $\ddot{\bf q}(0)$ and $\dot{\bf q}(0)$ pulls the body 
along the geodesic corresponding to the direction of these vectors. 

We still need to show that the motion is uniform. This fact follows obviously
from the integral of energy, but we can also derive it from the integrals of the
angular momentum. Indeed, for $\kappa>0$, these integrals lead us to
$$c=({\bf q}\times\dot{\bf q})\cdot({\bf q}\times\dot{\bf q})=({\bf q}\cdot{\bf q})
(\dot{\bf q}\cdot\dot{\bf q})\sin^2\alpha,$$
where $c$ is the length of the angular momentum vector and $\alpha$ is the
angle between ${\bf q}$ and $\dot{\bf q}$ (namely $\pi/2$). So since
${\bf q}\cdot{\bf q}=\kappa^{-1}$, we can
draw the conclusion that the speed of the body is constant.

For $\kappa<0$, we can write that
$$
c=({\bf q}\boxtimes\dot{\bf q})\boxdot ({\bf q}\boxtimes\dot{\bf q})=
-\left|\begin{array}{cc}
{\bf q}\boxdot{\bf q} & {\bf q}\boxdot\dot{\bf q}\\
{\bf q}\boxdot\dot{\bf q} & \dot{\bf q}\boxdot\dot{\bf q}\\
\end{array}\right|=-\left|\begin{array}{cc}
\kappa^{-1} & 0\\
0 & \dot{\bf q}\boxdot\dot{\bf q}\\
\end{array}\right|=-\kappa^{-1}\dot{\bf q}\boxdot\dot{\bf q}.
$$
Therefore the speed is constant in this case too, so the motion is uniform. Since
for $\kappa>0$ the body moves on geodesics of a sphere, every orbit is closed.
\end{proof}


\subsection{Hamiltonian form}

The equations of motion (\ref{eqmotion}) are Hamiltonian. Indeed, the Hamiltonian function
$H_\kappa$ is given by
$$
\begin{cases}
H_\kappa({\bf q},{\bf p})=
{1\over 2}\sum_{i=1}^nm_i^{-1}({\bf p}_i\odot{\bf p}_i)
(\kappa{\bf q}_i\odot{\bf q}_i)-U_\kappa({\bf q}),\cr 
{\bf q}_i\odot{\bf q}_i={\kappa}^{-1}, \ \kappa\ne 0,
\ \ 
i=1,\dots,n.
\end{cases}
$$
Equations (\ref{equations}) thus take the form of a $6n$-dimensional first order system
of differential equations with $2n$ constraints,
\begin{equation}
\begin{cases}
\dot{\bf q}_i=
\widetilde\nabla_{{\bf p}_i} H_\kappa({\bf q},{\bf p})=m_i^{-1}{\bf p}_i,\cr
\dot{\bf p}_i=
-\widetilde\nabla_{{\bf q}_i} H_\kappa({\bf q},{\bf p})=
\widetilde\nabla_{{\bf q}_i} U_\kappa({\bf q})
-m_i^{-1}{\kappa}({\bf p}_i\odot{\bf p}_i){\bf q}_i,\cr
{\bf q}_i\odot{\bf q}_i={\kappa}^{-1}, 
\ \  {\bf q}_i\odot{\bf p}_i=0,
\ \ \kappa\ne 0,
\ \  i=1,\dots,n.\label{Ham}
\end{cases}
\end{equation}

It is interesting to note that, independently of whether the kinetic energy is
defined as 
$$T_\kappa({\bf p}):={1\over 2}\sum_{i=1}^nm_i^{-1}{\bf p}_i\odot{\bf p}_i\ \ {\rm or} \ \
T_\kappa({\bf q}, {\bf p}):={1\over 2}\sum_{i=1}^nm_i^{-1}({\bf p}_i\odot{\bf p}_i)(\kappa{\bf q}_i\odot{\bf q}_i),$$
(which, though identical since $\kappa{\bf q}_i\odot{\bf q}_i=1$, does not come to the same thing when differentiating $T_\kappa$), the form
of equations (\ref{eqmotion}) remains the same. But in the former case, system
(\ref{eqmotion}) cannot be put in Hamiltonian form in spite of having an energy
integral, while in the former case it can. This is why we chose the 
latter definition of $T_\kappa$.

These equations describe the motion of the $n$-body system for any $\kappa\ne 0$,
the case $\kappa=0$ corresponding to the classical Newtonian equations. 
The representative non-zero-curvature cases, however, are $\kappa=1$ and $\kappa=-1$, which characterize the motion for $\kappa>0$ and $\kappa<0$, respectively. Therefore we will further focus on the $n$-body problem in ${\bf S}^2$ 
and ${\bf H}^2$.


\subsection{Equations of motion in ${\bf S}^2$}

In this case, the force function (\ref{pothom}) takes the form
\begin{equation}
U_1({\bf q})={1\over 2}\sum_{i=1}^n\sum_{j=1,j\ne i}^n\frac{m_im_j~ \frac{{\bf q}_i\cdot{\bf q}_j}{\snorm}}{
\sqrt{1-\left(\frac{{\bf q}_i\cdot{\bf q}_j}{\snorm}\right)^2}}
,\label{potS}
\end{equation}
while the equations of motion (\ref{eqmotion}) and their constraints become 
\begin{equation}
m_i\ddot{\bf q}_i=\nabla_{{\bf q}_i} U_1({\bf q})-m_i(\dot{\bf q}_i\cdot\dot{\bf q}_i){\bf q}_i,\ \ \ {\bf q}_i\cdot{\bf q}_i=1,\ \ \ {\bf q}_i\cdot\dot{\bf q}_i=0,\ \ \ i=1,\dots,n.\label{eqS}
\end{equation}

In terms of coordinates, the equations of motion and their constraints can be written as
\begin{equation}
\begin{cases}
m_i\ddot{x}_i={\partial U_1\over\partial x_i}-m_i(\dot{x}_i^2+\dot{y}_i^2+\dot{z}_i^2)x_i,\cr
m_i\ddot{y}_i={\partial U_1\over\partial y_i}-m_i(\dot{x}_i^2+\dot{y}_i^2+\dot{z}_i^2)y_i,\cr
m_i\ddot{z}_i={\partial U_1\over\partial z_i}-m_i(\dot{x}_i^2+\dot{y}_i^2+\dot{z}_i^2)z_i,\cr
x_i^2+y_i^2+z_i^2=1,\ \ x_i\dot{x}_i+y_i\dot{y}_i+z_i\dot{z}_i=0,\ \ i=1,\dots,n,\label{eqcoordS}
\end{cases}
\end{equation}
and by computing the gradients they become
\begin{equation}
\begin{cases}
\ddot{x}_i=\sum_{j=1,j\ne i}^n{m_j{x_j-{x_ix_j+y_iy_j+z_iz_j\over x_i^2+y_i^2+z_i^2}x_i
\over \sqrt{x_i^2+y_i^2+z_i^2}\sqrt{x_j^2+y_j^2+z_j^2}}\over
\bigg[1-\Big({x_ix_j+y_iy_j+z_iz_j\over\sqrt{x_i^2+y_i^2+z_i^2}\sqrt{x_j^2+y_j^2+z_j^2}}\Big)^2\bigg]^{3/2}}-(\dot{x}_i^2+\dot{y}_i^2+\dot{z}_i^2)x_i,\cr
\ddot{y}_i=\sum_{j=1,j\ne i}^n{m_j{y_j-{x_ix_j+y_iy_j+z_iz_j\over x_i^2+y_i^2+z_i^2}y_i
\over \sqrt{x_i^2+y_i^2+z_i^2}\sqrt{x_j^2+y_j^2+z_j^2}}\over
\bigg[1-\Big({x_ix_j+y_iy_j+z_iz_j\over\sqrt{x_i^2+y_i^2+z_i^2}\sqrt{x_j^2+y_j^2+z_j^2}}\Big)^2\bigg]^{3/2}}-(\dot{x}_i^2+\dot{y}_i^2+\dot{z}_i^2)y_i,\cr
\ddot{z}_i=\sum_{j=1,j\ne i}^n{m_j{z_j-{x_ix_j+y_iy_j+z_iz_j\over x_i^2+y_i^2+z_i^2}z_i
\over \sqrt{x_i^2+y_i^2+z_i^2}\sqrt{x_j^2+y_j^2+z_j^2}}\over
\bigg[1-\Big({x_ix_j+y_iy_j+z_iz_j\over\sqrt{x_i^2+y_i^2+z_i^2}\sqrt{x_j^2+y_j^2+z_j^2}}\Big)^2\bigg]^{3/2}}-(\dot{x}_i^2+\dot{y}_i^2+\dot{z}_i^2)z_i,\cr
x_i^2+y_i^2+z_i^2=1,\ \ x_i\dot{x}_i+y_i\dot{y}_i+z_i\dot{z}_i=0,\ \ i=1,\dots,n.\label{coordSfull}
\end{cases}
\end{equation}
Since we will neither need the homogeneity of the gradient,
nor we will we differentiate it, we can use the constraints to write the
above system as
\begin{equation}
\begin{cases}
\ddot{x}_i=\sum_{j=1,j\ne i}^n{m_j[x_j-(x_ix_j+y_iy_j+z_iz_j)x_i]\over
[1-(x_ix_j+y_iy_j+z_iz_j)^2]^{3/2}}-(\dot{x}_i^2+\dot{y}_i^2+\dot{z}_i^2)x_i,\cr
\ddot{y}_i=\sum_{j=1,j\ne i}^n{m_j[y_j-(x_ix_j+y_iy_j+z_iz_j)y_i]\over
[1-(x_ix_j+y_iy_j+z_iz_j)^2]^{3/2}}-(\dot{x}_i^2+\dot{y}_i^2+\dot{z}_i^2)y_i,\cr
\ddot{z}_i=\sum_{j=1,j\ne i}^n{m_j[z_j-(x_ix_j+y_iy_j+z_iz_j)z_i]\over
[1-(x_ix_j+y_iy_j+z_iz_j)^2]^{3/2}}-(\dot{x}_i^2+\dot{y}_i^2+\dot{z}_i^2)z_i,\cr
x_i^2+y_i^2+z_i^2=1,\ \ x_i\dot{x}_i+y_i\dot{y}_i+z_i\dot{z}_i=0,\ \ i=1,\dots,n.\label{coordS}
\end{cases}
\end{equation}
The Hamiltonian form of the equations of motion is
\begin{equation}
\begin{cases}
\dot{\bf q}_i=
m_i^{-1}{\bf p}_i,\cr
\dot{\bf p}_i=
\sum_{j=1, j\ne i}^n{m_im_j[{\bf q}_j-({\bf q}_i\cdot{\bf q}_j){\bf q}_i]\over
[1-({\bf q}_i\cdot{\bf q}_j)^2]^{3/2}}-m_i^{-1}
({\bf p}_i\cdot{\bf p}_i){\bf q}_i,\cr
{\bf q}_i\cdot{\bf q}_i=1, 
\ \  {\bf q}_i\cdot{\bf p}_i=0,
\ \ \kappa\ne 0,
\ \  i=1,\dots,n.\label{HamS}
\end{cases}
\end{equation}
Consequently the integral of energy has the form
\begin{equation}
\sum_{i=1}^nm_i^{-1}({\bf p}_i\cdot{\bf p}_i)-\sum_{i=1}^n\sum_{j=1,j\ne i}^n\frac{m_im_j~ \frac{{\bf q}_i\cdot{\bf q}_j}{\snorm}}{
\sqrt{1-\left(\frac{{\bf q}_i\cdot{\bf q}_j}{\snorm}\right)^2}}=2h,
\label{eneS}
\end{equation}
which, via ${\bf q}_i\cdot{\bf q}_i=1,\ i=1,\dots,n$, becomes
\begin{equation}
\sum_{i=1}^nm_i^{-1}({\bf p}_i\cdot{\bf p}_i)-\sum_{i=1}^n\sum_{j=1,j\ne i}^n
{m_im_j\q_i\cdot\q_j\over\sqrt{1-(\q_i\cdot\q_j)^2}}=2h,
\label{enerS}
\end{equation}
and the integrals of the angular momentum take the form
\begin{equation}
\sum_{i=1}^n{\bf q}_i\times{\bf p}_i={\bf c}.
\end{equation}
Notice that sometimes we can use the simpler form \eqref{enerS} of the energy integral, but whenever we need to exploit the homogeneity of the potential or have to differentiate it, we must use the more complicated form \eqref{eneS}.


\subsection{Equations of motion in ${\bf H}^2$}

In this case, the force function (\ref{pothom}) takes the form
\begin{equation}
U_{-1}({\bf q})=-{1\over 2}\sum_{i=1}^n\sum_{j=1,j\ne i}^n\frac{m_im_j~\frac{\q_i\boxdot\q_j}{\hnorm}}{\sqrt{\left(\frac{\q_i\boxdot\q_j}{\hnorm}   \right)^2-1}}
,\label{potH}
\end{equation}
so the equations of motion and their constraints become
\begin{multline}
m_i\ddot{\bf q}_i=\overline\nabla_{{\bf q}_i}U_{-1}({\bf q})+m_i(\dot{\bf q}_i\boxdot\dot{\bf q}_i){\bf q}_i,\ {\bf q}_i\boxdot{\bf q}_i=-1,\  {\bf q}_i\boxdot\dot{\bf q}_i=0,\\ 
i=1,\dots,n.\label{eqH}
\end{multline}

In terms of coordinates, the equations of motion and their constraints can be written as
\begin{equation}
\begin{cases}
m_i\ddot{x}_i=\ \ {\partial U_{-1}\over\partial x_i}+m_i(\dot{x}_i^2+\dot{y}_i^2-\dot{z}_i^2)x_i,\cr
m_i\ddot{y}_i=\ \  {\partial U_{-1}\over\partial y_i}+m_i(\dot{x}_i^2+\dot{y}_i^2-\dot{z}_i^2)y_i,\cr
m_i\ddot{z}_i=-{\partial U_{-1}\over\partial z_i}+m_i(\dot{x}_i^2+\dot{y}_i^2-\dot{z}_i^2)z_i,\cr
x_i^2+y_i^2-z_i^2=-1,\ \ x_i\dot{x}_i+y_i\dot{y}_i-z_i\dot{z}_i=0,\ \ i=1,\dots,n,
\label{eqcoordH}
\end{cases}
\end{equation}
and by computing the gradients they become
\begin{equation}
\begin{cases}
\ddot{x}_i=\sum_{j=1,j\ne i}^n{m_j{x_j+{x_ix_j+y_iy_j-z_iz_j\over -x_i^2-y_i^2+z_i^2}x_i
\over \sqrt{-x_i^2-y_i^2+z_i^2}\sqrt{-x_j^2-y_j^2+z_j^2}}\over
\bigg[\Big({x_ix_j+y_iy_j-z_iz_j\over\sqrt{-x_i^2-y_i^2+z_i^2}\sqrt{-x_j^2-y_j^2+z_j^2}}\Big)^2-1\bigg]^{3/2}}+(\dot{x}_i^2+\dot{y}_i^2-\dot{z}_i^2)x_i,\cr
\ddot{y}_i=\sum_{j=1,j\ne i}^n{m_j{y_j+{x_ix_j+y_iy_j-z_iz_j\over -x_i^2-y_i^2+z_i^2}y_i
\over \sqrt{-x_i^2-y_i^2+z_i^2}\sqrt{-x_j^2-y_j^2+z_j^2}}\over
\bigg[\Big({x_ix_j+y_iy_j-z_iz_j\over\sqrt{-x_i^2-y_i^2+z_i^2}\sqrt{-x_j^2-y_j^2+z_j^2}}\Big)^2-1\bigg]^{3/2}}+(\dot{x}_i^2+\dot{y}_i^2-\dot{z}_i^2)y_i,\cr
\ddot{z}_i=\sum_{j=1,j\ne i}^n{m_j{z_j+{x_ix_j+y_iy_j-z_iz_j\over -x_i^2-y_i^2+z_i^2}z_i
\over \sqrt{-x_i^2-y_i^2+z_i^2}\sqrt{-x_j^2-y_j^2+z_j^2}}\over
\bigg[\Big({x_ix_j+y_iy_j-z_iz_j\over\sqrt{-x_i^2-y_i^2+z_i^2}\sqrt{-x_j^2-y_j^2+z_j^2}}\Big)^2-1\bigg]^{3/2}}+(\dot{x}_i^2+\dot{y}_i^2-\dot{z}_i^2)z_i,\cr
x_i^2+y_i^2-z_i^2=-1,\ \ x_i\dot{x}_i+y_i\dot{y}_i-z_i\dot{z}_i=0,\ \ i=1,\dots,n.\label{coordHfull}
\end{cases}
\end{equation}
For the same reasons described in the previous subsection, we can use the constraints to write from now on the above system as
\begin{equation}
\begin{cases}
\ddot{x}_i=\sum_{j=1,j\ne i}^n{m_j[x_j+(x_ix_j+y_iy_j-z_iz_j)x_i]\over
[(x_ix_j+y_iy_j-z_iz_j)^2-1]^{3/2}}+(\dot{x}_i^2+\dot{y}_i^2-\dot{z}_i^2)x_i,\cr
\ddot{y}_i=\sum_{j=1,j\ne i}^n{m_j[y_j+(x_ix_j+y_iy_j-z_iz_j)y_i]\over
[(x_ix_j+y_iy_j-z_iz_j)^2-1]^{3/2}}+(\dot{x}_i^2+\dot{y}_i^2-\dot{z}_i^2)y_i,\cr
\ddot{z}_i=\sum_{j=1,j\ne i}^n{m_j[z_j+(x_ix_j+y_iy_j-z_iz_j)z_i]\over
[(x_ix_j+y_iy_j-z_iz_j)^2-1]^{3/2}}+(\dot{x}_i^2+\dot{y}_i^2-\dot{z}_i^2)z_i,\cr
x_i^2+y_i^2-z_i^2=-1,\ \ x_i\dot{x}_i+y_i\dot{y}_i-z_i\dot{z}_i=0,\ \ i=1,\dots,n.\label{coordH}
\end{cases}
\end{equation}
The Hamiltonian form of the equations of motion is
\begin{equation}
\begin{cases}
\dot{\bf q}_i=
m_i^{-1}{\bf p}_i,\cr
\dot{\bf p}_i=
\sum_{j=1, j\ne i}^n{m_im_j[{\bf q}_j+({\bf q}_i\boxdot{\bf q}_j){\bf q}_i]\over
[({\bf q}_i\boxdot{\bf q}_j)^2-1]^{3/2}}+m_i^{-1}
({\bf p}_i\boxdot{\bf p}_i){\bf q}_i,\cr
{\bf q}_i\boxdot{\bf q}_i=-1, 
\ \  {\bf q}_i\boxdot{\bf p}_i=0,
\ \ \kappa\ne 0,
\ \  i=1,\dots,n.\label{HamH}
\end{cases}
\end{equation}
Consequently the integral of energy takes the form
\begin{equation}
\sum_{i=1}^nm_i^{-1}({\bf p}_i\boxdot{\bf p}_i)+\sum_{i=1}^n\sum_{j=1,j\ne i}^n\frac{m_im_j~\frac{\q_i\boxdot\q_j}{\hnorm}}{\sqrt{\left(\frac{\q_i\boxdot\q_j}{\hnorm}   \right)^2-1}}=2h,
\label{eneH}
\end{equation}
which, via ${\bf q}_i\boxdot{\bf q}_i=-1, \ i=1,\dots,n$, becomes
\begin{equation}
\sum_{i=1}^nm_i^{-1}({\bf p}_i\boxdot{\bf p}_i)+\sum_{i=1}^n\sum_{j=1,j\ne i}^n
{m_im_j\q_i\boxdot\q_j\over\sqrt{(\q_i\boxdot\q_j)^2-1}}=2h,
\label{enerH}
\end{equation}
and the integrals of the angular momentum can be written as
\begin{equation}
\sum_{i=1}^n{\bf q}_i\boxtimes{\bf p}_i={\bf c}.
\end{equation}
Notice that sometimes we can use the simpler form \eqref{enerH} of the energy integral, but whenever we need to exploit the homogeneity of the potential or have to differentiate it, we must use the more complicated form \eqref{eneH}.


\subsection{Equations of motion in ${\bf S}^{\mu}$ and ${\bf H}^{\mu}$}

The formalism we adopted in this paper allows a straightforward generalization 
of the $n$-body problem to ${\bf S}^{\mu}$ and ${\bf H}^{\mu}$ for any integer 
$\mu\ge 1$. The equations of motion in $\mu$-dimensional spaces of constant
curvature have the form (\ref{eqmotion}) for vectors ${\bf q}_i$ and ${\bf q}_j$ 
of ${\bf R}^{\mu+1}$ constrained to the corresponding manifold. 
It is then easy to see from any coordinate-form
of the system that ${\bf S}^\nu$ and ${\bf H}^\nu$ are invariant sets for the
equations of motion in ${\bf S}^{\mu}$ and ${\bf H}^{\mu}$, respectively, for any
integer $\nu<\mu$.

Indeed, this is the case, say, for equations \eqref{coordS}, if we take $x_i(0)=0, \dot{x}_i(0)=0, \ i=1,\dots,n$. Then the equations of $\ddot{x}_i$ are identically 
satisfied, and the motion takes place on the circle $y^2+z^2=1$. The generalization 
of this idea from one component to any number $\nu$ of components in a
$(\mu+1)$-dimensional space, with $\nu<\mu$, is straightforward. Therefore 
the study of the $n$-body problem on surfaces of constant curvature is fully justified.

The only aspect of this generalization that is not obvious from our formalism 
is how to extend the cross product to higher dimensions. But this extension 
can be done as in general relativity with the help of the exterior product. However, 
we will not get into higher dimensions in this paper. Our further goal is to study the
2-dimensional case.


\section{Singularities}

Singularities have always been a rich source of research in the
theory of differential equations. The $n$-body problem we derived
in the previous section seems to make no exception from this rule. 
In what follows, we will point out the singularities that 
occur in our problem and prove some results related to them.
The most surprising seems to be the existence of a class of 
solutions with some hybrid singularities, which are both collisional and
non-collisional.


\subsection{Singularities of the equations}

The equations of motion (\ref{Ham}) have restrictions. First, the variables are 
constrained to a surface of constant curvature, i.e.\ $({\bf q},{\bf p})\in {\bf T}^*({\bf M}_\kappa^2)^n$, where ${\bf M}^2_\kappa$ is the surface of curvature $\kappa\ne 0$ (in particular, ${\bf M}^2_1={\bf S}^2$ and ${\bf M}^2_{-1}={\bf H}^2$), 
${\bf T}^*({\bf M}_\kappa^2)^n$ is the cotangent bundle of ${\bf M}^2_\kappa$. Second, system (\ref{Ham}),
which contains the gradient \eqref{gra}, is
undefined in the set ${\bf \Delta}:=\cup_{1\le i<j\le n}{\bf \Delta}_{ij}$, with
$${\bf \Delta}_{ij}:=\{{\bf q}\in({\bf M}^2_\kappa)^n\ |\ (\kappa{\bf q}_i\odot{\bf q}_j)^2=1\},$$ 
where both the force function \eqref{pothom} and its gradient \eqref{gra} become infinite.
Thus the set $\bf\Delta$ contains the singularities of the equations of motion.

The singularity condition, $(\kappa{\bf q}_i\odot{\bf q}_j)^2=1$, suggests that we
consider two cases, and thus write ${\bf \Delta}_{ij}={\bf \Delta}_{ij}^+\cup{\bf \Delta}_{ij}^-$, where
$$
{\bf \Delta}_{ij}^+:=\{{\bf q}\in({\bf M}^2_\kappa)^n\ |\ \kappa{\bf q}_i\odot{\bf q}_j=1\}\ \
{\rm and}\ \ 
{\bf \Delta}_{ij}^-:=\{{\bf q}\in({\bf M}^2_\kappa)^n\ |\ \kappa{\bf q}_i\odot{\bf q}_j=-1\}.
$$
Accordingly, we define 
$$
{\bf \Delta}^+:=\cup_{1\le i<j\le n}{\bf \Delta}_{ij}^+\ \ {\rm and}\ \
{\bf \Delta}^-:=\cup_{1\le i<j\le n}{\bf \Delta}_{ij}^-.
$$
Then ${\bf \Delta}={\bf \Delta}^+\cup{\bf \Delta}^-$. The elements of ${\bf \Delta}^+$ correspond to collisions for any $\kappa\ne 0$,
whereas the elements of ${\bf \Delta}^-$ correspond to what we will call antipodal singularities when $\kappa>0$. The latter occur when two bodies are at the opposite 
ends of the same diameter of a sphere. For $\kappa<0$, such singularities do not 
exist because $\kappa{\bf q}_i\odot{\bf q}_j\ge 1$. 

In conclusion, the equations of motion are undefined for configurations that involve
collisions on spheres or hyperboloids, as well as for configurations with antipodal
bodies on spheres of any curvature $\kappa>0$. In both cases, the gravitational forces become infinite. 

In the 2-body problem, ${\bf \Delta}^+$ and ${\bf \Delta}^-$ are disjoint sets. Indeed,
since there are only two bodies, $\kappa{\bf q}_1\cdot{\bf q}_2$ is either $+1$ or
$-1$, but cannot be both. The set ${\bf \Delta}^+\cap{\bf \Delta}^-$, however, is not empty for $n\ge 3$. In the 3-body problem, for instance, the configuration in which 
two bodies are at collision and the third lies at the opposite end of the corresponding diameter is, what we will call from now on, a collision-antipodal singularity.

The theory of differential equations merely regards singularities as points for which 
the equations break down, and must therefore be avoided. But singularities
exhibit sometimes a dynamical structure. In the $3$-body problem in $\bf R$, for instance, the set of binary collisions is attractive in the sense that for any given initial velocities, there are initial positions such that if two bodies come close
enough to each other but far enough from other collisions, then the collision
will take place. (Things are more complicated with triple collisions. Two of the
bodies coming close to triple collisions may form a binary while the third
gets expelled with high velocity away from the other two, \cite{McGe}.) 

Something similar happens for binary collisions in the 3-body 
problem on a geodesic of ${\bf S}^2$. Given some initial velocities, one 
can choose initial positions that put $m_1$ and $m_2$ close enough to a 
binary collision, and $m_3$ far enough from an antipodal singularity with 
either $m_1$ or $m_2$, such that the binary collision takes place. This 
is indeed the case because the attraction between $m_1$ and $m_2$ can 
be made as large as desired by placing the bodies close enough to each 
other. Since $m_3$ is far enough from an antipodal position, and no
comparable force can oppose the attraction between $m_1$ and $m_2$,
these bodies will collide.

Antipodal singularities lead to a new phenomenon on geodesics of ${\bf S}^2$. Given initial velocities, no matter how close one chooses initial positions near an antipodal singularity, the corresponding solution is repelled in future time from this singularity as long as no collision force compensates for this force. So while binary collisions can be regarded as attractive if far away from binary antipodal singularities, binary antipodal singularities can be seen as repulsive if far away from collisions. But what happens when collision and antipodal singularities are close to each other? As we will see in the next subsection, the behavior of solutions in that region is sensitive to the choice of masses and initial conditions. In particular, we will prove the existence of some hybrid singular solutions in the 3-body problem, namely those that end in finite time in a collision-antipodal singularity, as well as of solutions
that reach a collision-antipodal configuration but remain analytic at this point.


\subsection{Solution singularities}

The set $\bf\Delta$ is related to singularities which arise from the question 
of existence and uniqueness of initial value problems. For initial conditions 
$({\bf q},{\bf p})(0)\in{\bf T}^*({\bf M}_\kappa^2)^n$ with ${\bf q}(0)\notin\bf\Delta$, standard results of the theory of differential equations ensure 
local existence and uniqueness of an analytic solution $({\bf q},{\bf p})$ defined 
on some interval $[0,t^+)$. Since the surfaces ${\bf M}^2_\kappa$ are connected, 
this solution can be analytically extended to an interval $[0,t^*)$, with $0<t^+\le t^*\le\infty$. If $t^*=\infty$, the solution is globally defined. But if $t^*<\infty$, the solution is called singular, and we say that it has a singularity at time $t^*$. 

There is a close connection between singular solutions and singularities of the
equations of motion. In the classical case ($\kappa=0$), this connection was pointed
out by Paul Painlev\'e towards the end of the 19th century. In his famous lectures
given in Stockholm, \cite{Pai}, he showed that every singular solution $({\bf q},{\bf p})$
is such that ${\bf q}(t)\to{\bf\Delta}$ when $t\to t^*$, for otherwise the solution would
be globally defined. In the Euclidean case, $\kappa=0$, the set $\bf\Delta$ is
formed by all configurations with collisions, so when ${\bf q}(t)$ tends to an element
of $\bf\Delta$, the solution ends in a collision singularity. But it is also possible that 
${\bf q}(t)$ tends to $\bf\Delta$ without asymptotic phase, i.e.~by oscillating among
various elements without ever reaching a definite position. Painlev\'e conjectured
that such noncollision singularities, which he called pseudocollisions, exist. 
In 1908, Hugo von Zeipel showed that a necessary condition for a solution to
experience a pseudocollision is that the motion becomes unbounded in finite
time, \cite{Zei}, \cite{McG}. Zhihong (Jeff) Xia produced the first example of this 
kind in 1992, \cite{Xia}. Historical accounts of this development appear in
 \cite{Diac} and \cite{Dia0}.

The results of Painlev\'e don't remain intact in our problem, \cite{Dia1}, 
\cite{Dia2008}, so whether pseudocollisions exist for $\kappa\ne 0$ is not clear. Nevertheless, we will now show that there are solutions ending in collision-antipodal singularities of the equations of motion, solutions these singularities repel, as well
as solutions that are not singular at such configurations. To prove these facts, we need the result stated below, which provides a criterion for determining the direction of motion along a great circle in the framework of an isosceles problem defined in
an invariant set ${\bf S}^1$.


\begin{lemma}
Consider the $n$-body problem in ${\bf S}^2$, and assume that a body of
mass $m$ is at rest at time $t_0$ on the geodesic $z=0$ within its first quadrant, $x,y>0$. Then, if

(a) $\ddot{x}(t_0)> 0$ and $\ddot{y}(t_0)< 0$, the force pulls
the body along the circle toward the point $(x,y)=(1,0)$.

(b) $\ddot{x}(t_0)< 0$ and $\ddot{y}(t_0)> 0$, the force pulls
the body along the circle toward the point $(x,y)=(0,1)$.

(c) $\ddot{x}(t_0)\le 0$ and $\ddot{y}(t_0)\le 0$, the force pulls the body toward the point $(1,0)$ if $\ddot{y}(t_0)/\ddot{x}(t_0)>y(t_0)/x(t_0)$, toward $(0,1)$ if $\ddot{y}(t_0)/\ddot{x}(t_0)<y(t_0)/x(t_0)$, but no force acts on the body if neither of the previous inequalities
holds. 

(d) $\ddot{x}(t_0)> 0$ and $\ddot{y}(t_0)> 0$, the motion is impossible.\label{singlemma}
\end{lemma}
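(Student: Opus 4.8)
The plan is to reduce everything to the single observation that, for a body momentarily at rest, the acceleration equals the gravitational force per unit mass and is therefore tangent to the sphere. First I would use the hypothesis $\dot{\mathbf q}(t_0)=\mathbf 0$ to kill the velocity-dependent term in the equations of motion \eqref{eqS}, so that at $t_0$ one has $m\ddot{\mathbf q}=\nabla_{\mathbf q}U_1(\mathbf q)$, where $\mathbf q=(x,y,z)$ is the position of the body in question. Since the body starts from rest, its instantaneous displacement is along $\ddot{\mathbf q}(t_0)$ (the leading term of the Taylor expansion is $\tfrac12\ddot{\mathbf q}(t_0)(t-t_0)^2$), so the whole lemma becomes a statement about the vector $(\ddot x,\ddot y,\ddot z)(t_0)$.

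Next I would record the consequences of Euler's relation \eqref{eul}. Because $U_1$ is homogeneous of degree zero, $\mathbf q\cdot\nabla_{\mathbf q}U_1=0$, and hence $\mathbf q\cdot\ddot{\mathbf q}=0$ at $t_0$; with $z=0$ this reads $x\ddot x+y\ddot y=0$. Thus the in-plane part of the acceleration is orthogonal to the radial direction $(x,y)$ and so is tangent to the great circle $z=0$ (in the invariant-set framework the normal component $\ddot z$ vanishes as well, so the body indeed moves along the circle). Parametrizing the circle by $(\cos\theta,\sin\theta)$, the unit tangent pointing toward $(1,0)$ (decreasing $\theta$) is $(y,-x,0)$ and the one pointing toward $(0,1)$ (increasing $\theta$) is $(-y,x,0)$; consequently the sign of the tangential acceleration $\ddot x\,y-\ddot y\,x$ decides the direction of motion, being positive toward $(1,0)$ and negative toward $(0,1)$.

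It then remains to translate this sign into the four sign patterns of the statement. In case (a) one has $\ddot x\,y-\ddot y\,x>0$ because both summands are positive, giving motion toward $(1,0)$; in case (b) both summands are negative, giving motion toward $(0,1)$. For case (d) the radial component $x\ddot x+y\ddot y$ would be strictly positive, contradicting the tangency relation $x\ddot x+y\ddot y=0$; hence this sign pattern cannot occur, so the motion is impossible. In case (c) the tangential sign is genuinely ambiguous and must be read off from magnitudes: starting from $\ddot x\,y-\ddot y\,x>0$ and dividing by the negative number $\ddot x$ and by $x>0$, with the attendant reversal of the inequality, one finds this is equivalent to $\ddot y/\ddot x>y/x$, which gives motion toward $(1,0)$; the reverse inequality gives $(0,1)$; and equality makes the in-plane acceleration purely radial, so that by Euler's relation \eqref{eul} it must vanish and no force acts.

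I expect the only delicate point to be the bookkeeping in case (c): one must track how dividing by the nonpositive quantity $\ddot x$ flips the inequality, and one must invoke Euler's relation once more to argue that the borderline purely radial situation really corresponds to zero net force rather than to a nonzero inward pull. Matching the orientation conventions—checking that $(y,-x,0)$ points toward $(1,0)$ and not toward $(0,1)$—is the other place where a sign slip would reverse every conclusion, so I would verify it explicitly against the parametrization before finishing.
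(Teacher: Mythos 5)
Your proof is correct and follows essentially the same route as the paper: both arguments reduce the lemma to the observation that a body at rest has acceleration equal to the gradient of the force function, hence tangent to the great circle (so $x\ddot x+y\ddot y=0$), and then read off the direction of motion from the sign data on $(\ddot x,\ddot y)$, including the impossibility of case (d) and the use of Euler's relation to show that a purely radial acceleration in the borderline subcase of (c) must vanish. The only difference is presentational: you compute the tangential component $\ddot x\,y-\ddot y\,x$ against the explicit unit tangent $(y,-x,0)$ and track the inequality reversal when dividing by $\ddot x\le 0$, whereas the paper argues geometrically with quadrants and half-planes relative to the tangent line at the body's position.
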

\def\JPicScale{0.5}
\ifx\JPicScale\undefined\def\JPicScale{1}\fi
\unitlength \JPicScale mm
\begin{figure}
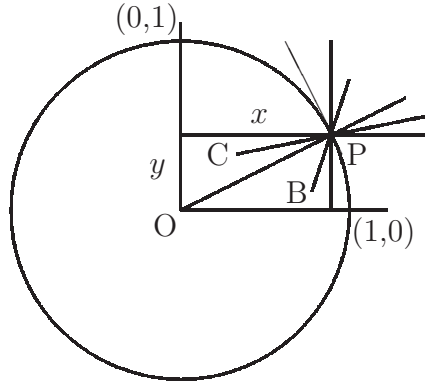


\caption{The relative positions of the force acting on $m$, while the body
is on the geodesic $z=0$.}\label{circ}
\end{figure}
\begin{proof}
By equation \eqref{h1}, $x\ddot x+y\ddot y=-(\dot x^2+\dot y^2) \le 0$, which means that the force acting on $m$ is always directed along the tangent at $m$ to the geodesic circle $z=0$ or inside the half-plane containing this circle. Assuming that an $xy$-coordinate system is fixed at the origin of the acceleration vector (point P in Figure \ref{circ}), this vector always lies in the half-plane below the line of slope $-x(t_0)/y(t_0)$ (i.e.~the tangent to the circle at the point P in Figure \ref{circ}). We 
further prove each case separately.

(a) If $\ddot{x}(t_0)> 0$ and $\ddot{y}(t_0)<0$, the force acting on $m$ is 
represented by a vector that lies in the region given by the
intersection of the fourth quadrant (counted counterclockwise) and the half 
plane below the line of slope $-x(t_0)/y(t_0)$. Then, obviously, the force 
pulls the body along the circle in the direction of the point $(1,0)$.

(b) If $\ddot{x}(t_0)< 0$ and $\ddot{y}(t_0)> 0$, the force
acting on $m$ is represented by a vector that lies in the region given by the
intersection of the second quadrant and the half plane lying below the line of 
slope $-x(t_0)/y(t_0)$. Then, obviously, the force pulls the body along 
the circle in the direction of the point $(0,1)$.

(c) If $\ddot{x}(t_0)\le 0$ and $\ddot{y}(t_0)\le 0$, the force acting on $m$ is represented by a vector lying in the third quadrant. Then the direction in which
this force acts depends on whether the acceleration vector lies: (i) below the line of slope $y(t_0)/x(t_0)$ (PB is below OP in Figure \ref{circ}); (ii) above it (PC is above OP); or (iii) on it (i.e.~on the line OP). Case (iii) includes the case when the 
acceleration is zero.

In case (i), the acceleration vector lies on a line whose slope is larger
than $y(t_0)/x(t_0)$, i.e. $\ddot{y}(t_0)/\ddot{x}(t_0)>y(t_0)/x(t_0)$, so
the force pulls $m$ toward $(1,0)$. In case (ii), the acceleration vector 
lies on a line of slope that is smaller than $y(t_0)/x(t_0)$, i.e.~$\ddot{y}(t_0)/\ddot{x}(t_0)<y(t_0)/x(t_0)$, so the force pulls $m$ toward $(0,1)$. In case (iii), 
the acceleration vector is either zero or lies on the line of slope 
$y(t_0)/x(t_0)$, i.e.~$\ddot{y}(t_0)/\ddot{x}(t_0)=y(t_0)/x(t_0)$. But
the latter alternative never happens. This fact follows from the
equations of motion \eqref{eqmotion}, which show that the acceleration
is the difference between the gradient of the force function and a multiple
of the position vector. But according to Euler's formula for homogeneous 
functions, \eqref{eul}, and the fact that the velocities are zero, these vectors 
are orthogonal, so their difference can have the same direction as one of them only
if it is zero. This vectorial argument agrees with the kinematic facts, 
which show that if $\dot{x}(t_0)=\dot{y}(t_0)=0$ and the acceleration 
has the same direction as the position vector, then $m$ doesn't move, 
so $\dot{x}(t)=\dot{y}(t)=0$, and therefore $\ddot{x}(t)=\ddot{y}(t)=0$ for 
all $t$. In particular, this means that when $\ddot{y}(t_0)=\ddot{x}(t_0)=0$, 
no force acts on $m$, so the body remains fixed.

(d) If $\ddot{x}(t_0)> 0$ and $\ddot{y}(t_0)> 0$, the force acting on $m$ is 
represented by a vector that lies in the region given by the intersection 
between the first quadrant and the half-plane lying below the line of slope 
$-x(t_0)/y(t_0)$. But this region is empty, so the motion doesn't take place.
\end{proof}

We will further prove the existence of solutions with collision-antipodal
singularities, solutions repelled from collision-antipodal singularities in
positive time, as well as of solutions that remain analytic at a collision-antipodal
configuration. They show that the dynamics of $\bf\Delta^+\cap\Delta^-$ is 
more complicated than the dynamics of $\bf\Delta^+$ and $\bf\Delta^-$ away
from the intersection, since solutions can go both towards and away from 
this set for $t>0$, and can even avoid singularities. This result represents a first example of a non-collision singularity reached by only three bodies as well as a first
example of a non-singularity collision.


\begin{theorem}
Consider the 3-body problem in ${\bf S}^2$ with the bodies $m_1$ and $m_2$ 
having mass $M>0$ and the body $m_3$ having mass $m>0$. Then 

(i) there are values of\ \ $m$ and $M$, as well as initial conditions, for which the solutions end in finite time in a collision-antipodal singularity;

(ii) other choices of masses and initial conditions lead to solutions that are repelled from a collision-antipodal singularity;

(iii) and yet other choices of masses and initial data correspond to solutions
that reach a collision-antipodal configuration but remain analytic at this point.

\label{singularity}
\end{theorem}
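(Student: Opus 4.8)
The plan is to reduce the full spatial problem to one degree of freedom by exploiting a reflection symmetry, and then to read off the three regimes from the sign of a single quantity. First I would place the three bodies on the invariant great circle $z=0$ of ${\bf S}^2$ and choose an \emph{isosceles} configuration: put $m_3$ at the fixed point $(1,0,0)$ on the symmetry axis and the two equal masses at the mirror-image positions ${\bf q}_1=(\cos\alpha,\sin\alpha,0)$ and ${\bf q}_2=(\cos\alpha,-\sin\alpha,0)$. Since the force function $U_1$ depends only on mutual distances, the isometry $y\mapsto -y$ is a symmetry of \eqref{coordS}; hence the set of such symmetric configurations is invariant and the motion stays on the circle $z=0$ (the equation for $\ddot z_i$ is identically satisfied). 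Moreover ${\bf q}_1+{\bf q}_2=2\cos\alpha\,{\bf q}_3$, so the force on $m_3$ is parallel to ${\bf q}_3$; combined with Euler's identity \eqref{eul} this forces $\nabla_{{\bf q}_3}U_1={\bf 0}$, and if $m_3$ starts at rest it remains fixed at $(1,0,0)$ for all time.

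It then remains to analyze the single angular coordinate $\alpha$ of $m_1$. Projecting the equation of motion \eqref{eqS} for $m_1$ onto the unit tangent ${\bf t}=(-\sin\alpha,\cos\alpha,0)$, and using ${\bf q}_1\cdot{\bf q}_2=\cos 2\alpha$, ${\bf q}_1\cdot{\bf q}_3=\cos\alpha$, ${\bf q}_2\cdot{\bf t}=-\sin 2\alpha$, ${\bf q}_3\cdot{\bf t}=-\sin\alpha$, I obtain the reduced equation
\begin{equation*}
\ddot\alpha=\frac{M}{\sin^2 2\alpha}-\frac{m}{\sin^2\alpha}=\frac{M-4m\cos^2\alpha}{4\sin^2\alpha\cos^2\alpha},\qquad \alpha\in(\pi/2,\pi).
\end{equation*}
The collision--antipodal singularity is the endpoint $\alpha=\pi$, where ${\bf q}_1={\bf q}_2=(-1,0,0)$ is antipodal to $m_3$; the point $\alpha=\pi/2$ is the (distinct) $m_1m_2$-antipodal singularity. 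Setting $\alpha=\pi-\epsilon$ gives $\ddot\alpha\sim (M-4m)/(4\epsilon^2)$ as $\epsilon\to0^+$, so the behavior near the singularity is governed entirely by the sign of $M-4m$. This is the heart of the theorem: the $m_1m_2$ collisional attraction ($\sim M/\sin^2 2\alpha$) and the $m_1m_3$ antipodal repulsion ($\sim m/\sin^2\alpha$) are \emph{both} infinite at $\alpha=\pi$, and their leading singular parts cancel precisely when $M=4m$. (The directional content could equally be extracted from Lemma~\ref{singlemma}, but the explicit reduced field makes all three cases transparent.)

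For part (i), with $M>4m$ one has $M-4m\cos^2\alpha>0$ and hence $\ddot\alpha>0$ throughout $(\pi/2,\pi)$, so a solution released at rest is driven monotonically toward $\alpha=\pi$; the quadrature $\dot\alpha^2=2[W(\alpha_0)-W(\alpha)]$ for the one-dimensional potential $W$ with $W'=-\ddot\alpha$ gives $\dot\alpha^2\sim C/(\pi-\alpha)$, so the arrival time $\int^\pi d\alpha/\dot\alpha\sim\int^\pi\sqrt{\pi-\alpha}\,d\alpha$ is finite---a genuine collision--antipodal singularity reached in finite time. For part (ii), with $M<4m$ the factor $M-4m\cos^2\alpha$ is negative near $\pi$ (it vanishes at the equilibrium $\cos^2\alpha^\ast=M/4m$), producing a repulsive barrier $W\to+\infty$ as $\alpha\to\pi$; any solution starting near $\alpha=\pi$ with finite energy is turned back before reaching it, so the singularity repels. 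For part (iii), setting $M=4m$ collapses the equation to $\ddot\alpha=m/\cos^2\alpha$, whose right-hand side is analytic at $\alpha=\pi$ with value $m$; thus $\alpha(t)$ is analytic through the instant it reaches $\alpha=\pi$, and since each ${\bf q}_i$ is an analytic function of $\alpha$, the orbit passes through the collision--antipodal configuration analytically, with finite forces and velocities---a non-singularity collision.

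The main obstacle I anticipate is conceptual rather than computational: justifying that in case (iii) the orbit is truly analytic even though the ambient field \eqref{coordS} is \emph{singular} at the configuration $\alpha=\pi$, where individual interaction terms diverge. The resolution is exactly the cancellation $M=4m$: although each term blows up, their sum---the reduced field $m/\cos^2\alpha$---is analytic, so analyticity is recovered after passing to the symmetry-reduced coordinate and composing with the analytic embedding $\alpha\mapsto({\bf q}_1,{\bf q}_2,{\bf q}_3)$. A secondary technical point is to verify the finite-time and finite-barrier quadrature estimates in (i) and (ii) carefully, and to check that the chosen initial data keep the solution inside the arc $(\pi/2,\pi)$ (away from the $m_1m_2$-antipodal point at $\pi/2$) until the relevant event occurs.
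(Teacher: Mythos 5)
Your setup is essentially the paper's: an isosceles configuration on the invariant great circle $z=0$ with the odd mass fixed by symmetry, zero initial velocities, and a trichotomy governed by the sign of $M-4m$ (the paper simply picks the representative values $M=8m$, $M=2m$ and $M=4m$ for (i), (ii), (iii)). Your angular reduction $\ddot\alpha=M/\sin^22\alpha-m/\sin^2\alpha$ is correct on $(\pi/2,\pi)$ --- projecting the paper's formula \eqref{incond} onto the tangent direction $(-y,x)$ reproduces it exactly --- and your quadrature arguments for (i) and (ii) are sound and arguably cleaner than the paper's case analysis via Lemma \ref{singlemma}.

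The genuine gap is in (iii). The reduced field is \emph{not} analytic across $\alpha=\pi$: the formula $M/\sin^22\alpha-m/\sin^2\alpha$ represents it only on the side $\alpha<\pi$. Written invariantly, the tangential acceleration of $m_1$ is $-M\,\mathrm{sgn}(\sin 2\alpha)/\sin^2 2\alpha-m\,\mathrm{sgn}(\sin\alpha)/\sin^2\alpha$, because the pull from $m_3$ reverses direction when $m_1$ crosses the antipode of $m_3$, and the $m_1$--$m_2$ pull reverses when those two bodies pass through each other; both reversals occur simultaneously at $\alpha=\pi$. Hence for $\alpha$ slightly greater than $\pi$ the field is $-M/\sin^22\alpha+m/\sin^2\alpha$, which for $M=4m$ equals $-m/\cos^2\alpha$. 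The reduced acceleration therefore jumps from $+m$ to $-m$ at the collision--antipodal configuration, the analytic extension of the solution of $\ddot\alpha=m/\cos^2\alpha$ past $\alpha=\pi$ ceases to satisfy the equations of motion, and the actual continued orbit is $C^1$ but not $C^2$ at $t^*$. What your computation does rigorously establish --- finite force and finite velocity at $t^*$, arrival in finite time, one-sided analyticity on $[0,t^*]$, and a $C^1$ continuation beyond --- is exactly what the paper's own proof establishes before it, too, passes from ``forces and velocities remain finite'' to ``remains analytic''; but the specific cancellation you invoke to close that step (analyticity of the reduced field at $\alpha=\pi$) is the one claim that is demonstrably false, so the analyticity assertion in (iii) is not actually proved by your argument.
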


\def\JPicScale{0.5}
\ifx\JPicScale\undefined\def\JPicScale{1}\fi
\unitlength \JPicScale mm
\begin{figure}
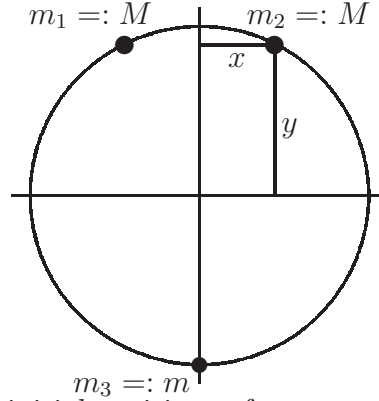


\caption{The initial positions of $m_1, m_2$, and $m_3$ 
on the geodesic $z=0$.}\label{cir}
\end{figure}

\begin{proof}
Let us start with some initial conditions we will refine on the way. During the 
refinement process, we will also choose suitable masses. Consider
\begin{align*}
x_1(0)&=-x(0),& y_1(0)&=y(0),&  z_1(0)&=0,\\
x_2(0)&=x(0),&  y_2(0)&=y(0),&  z_2(0)&=0,\\
x_3(0)&=0,&  y_3(0)&=-1,&   z_3(0)&=0,
\end{align*}
as well as zero initial velocities, where $0<x(t),y(t)<1$ are functions with $x(t)^2+y(t)^2=1$. Since all $z$ coordinates are zero, only the equations of 
coordinates $x$ and $y$ play a role in the motion. The symmetry of these initial 
conditions implies that $m_3$ remains fixed for all time (in fact the equations corresponding to $\ddot{x}_3$ and $\ddot{y}_3$ reduce to identities), that the
angular momentum is zero, and that it is enough to see what happens for $m_2$, because $m_1$ behaves symmetrically with respect to the $y$ axis. Thus, 
substituting the above initial conditions into the equations of motion, we obtain
\begin{equation}
\ddot{x}(0)=-{y(0)\over x^2(0)}\bigg({M\over 4y^2(0)}-m\bigg)\ \ \ {\rm and}\ \ \
\ddot{y}(0)={1\over x(0)}\bigg({M\over 4y^2(0)}-m\bigg).\label{incond}
\end{equation}
These equations show that several situations occur, depending on the
choice of masses and initial positions. Here are two significant possibilities.

1. For $M\ge4m$, it follows that $\ddot x(0)<0$ and $\ddot y(0)>0$ for any 
choices of initial positions with $0<x(0),y(0)<1$.

2. For $M<4m$, there are initial positions for which: 

\hskip0.6cm (a) $\ddot x(0)<0$ and $\ddot y(0)>0$,

\hskip0.6cm (b) $\ddot x(0)>0$ and $\ddot y(0)<0$,

\hskip0.6cm (c) $\ddot x(0)=\ddot y(0)=0$.

In case 2(c), the solutions are fixed points of the equations of motion, a situation
achieved, for instance, when $M=2m$ and $x(0)=y(0)=\sqrt{2}/2$. The cases of interest
for us, however, are 1 and 2(b). In the former, $m_2$ begins to move from rest
towards a collision with $m_1$ at $(0,1)$, but whether this collision takes place
also depends on velocities, which affect the equations of motion. In the latter case,
$m_2$ moves away from the same collision, and we need to see again how the
velocities alter this initial tendency. So let us write now the equations of motion for $m_2$ starting from arbitrary masses $M$ and $m$. The computations lead
us to the system
\begin{equation}
\begin{cases}
\ddot x=-{M\over 4x^2y}+{my\over x^2}-(\dot{x}^2+\dot{y}^2)x\cr
\ddot y={M\over 4xy^2}-{m\over x}-(\dot{x}^2+\dot{y}^2)y\cr
\end{cases}\label{initcond}
\end{equation}
and the energy integral 
$$
\dot{x}^2+\dot{y}^2={h\over M}-{2my\over x}+{M(2y^2-1)\over 2xy}.
$$
Substituting this expression of $\dot{x}^2+\dot{y}^2$ into equations (\ref{initcond}),
we obtain 
\begin{equation}
\begin{cases}
\ddot x={4(M-2m)x^4-2(M-2m)x^2-M+4m\over 4x^2y}-{h\over M}x\cr
\ddot y={M+2(M-2m)y^2-4(M-2m)y^4\over 4xy^2}-{h\over M}y.\cr
\end{cases}\label{twoeq}
\end{equation}
We will further focus on the first class of orbits announced in this theorem.

(i) To prove the existence of solutions with collision-antipodal singularities, let 
us further examine the case $M=8m$, which brings system \eqref{twoeq} to the form
\begin{equation}
\begin{cases}
\ddot x={6mx^2\over y}-{3m\over y}-{m\over x^2y}-{h\over 8m}x\cr
\ddot y={2m\over xy^2}+{3m\over x}-{6my^2\over x}-{h\over 8m}y,\cr
\end{cases}\label{coll}
\end{equation}
with the energy integral
\begin{equation}
\dot{x}^2+\dot{y}^2+{4mx\over y}-{2my\over x}={h\over 8m}.\label{enecoll}
\end{equation}
Then, as $x\to 0$ and $y\to 1$, both $\ddot x$ and $\ddot y$ tend to $-\infty$,
so they are ultimately negative. This fact corresponds to case (c) of
Lemma \ref{singlemma}. But a simple computation shows that $\ddot y/\ddot x$
tends to zero as $x\to 0$ and $y\to 1$. Since $y/x>0$, it follows that if 
$(x(0),y(0))$ is chosen close enough to $(0,1)$, then $\ddot y(0)/\ddot x(0)<
y(0)/x(0)$, so according to the conclusion of Lemma \ref{singlemma}(c) the collision-antipodal configuration is reached. As the forces and the potential are infinite at this point, using the energy relation \eqref{enecoll}  it follows that the velocities are also infinite. Consequently the motion cannot be analytically extended beyond the
collision-antipodal configuration, which thus proves to be a singularity.

(ii) To show the existence of solutions repelled from a collision-antipodal 
singularity of the equations of motion in positive time, let us take $M=2m$. Then equations
\eqref{twoeq} have the form
\begin{equation}
\begin{cases}
\ddot x={m\over 2x^2y}-{h\over 2m}x\cr
\ddot y={m\over 2xy^2}-{h\over 2m}y,\cr
\end{cases}\label{repel}
\end{equation}
with the integral of energy 
\begin{equation}
\dot x^2+\dot y^2+{m\over xy}={h\over 2m},\label{en}
\end{equation}
which implies that $h>0$. Obviously, as $x\to 0$ and $y\to 1$, the forces and and
the kinetic energy become infinite, so the collision-antipodal configuration is a
singularity if it were to be reached. But as we will further see, this cannot happen
for this choice of masses. Indeed, as we saw in case 2(c) above, the initial position $x(0)=$ $y(0)=\sqrt{2}/2$ corresponds to a fixed point of the equations of motion for zero initial velocities. Therefore we must seek the desired solution for initial conditions with $0<x(0)<\sqrt{2}/2$ and the corresponding choice of $y(0)>0$. Let us pick any such initial positions, as close to the collision-antipodal singularity as we want, and zero
initial velocities. For $x\to 0$, however, equations \eqref{repel} show that both $\ddot x$ and $\ddot y$ grow positive. But according to case (d) of Lemma \ref{singlemma}, such an outcome is impossible, so the motion cannot come infinitesimally close to the corresponding collision-antipodal singularity, which 
repels any solution with $M=2m$ and initial conditions chosen as we previously described.

(iii) To prove the existence of solutions that have no singularity at a collision-antipodal configuration, let us further examine the case $M=4m$, which brings system \eqref{twoeq} to the form 
\begin{equation}
\begin{cases}
\ddot{x}={m(2x^2-1)\over y}-{h\over 4m}x\cr
\ddot{y}={mx(2y^2+1)\over y^2}-{h\over 4m}y.\cr
\end{cases}
\label{yeq}
\end{equation}
For this choice of masses, the energy integral becomes
\begin{equation}
\dot{x}^2+\dot{y}^2+{2mx\over y}={h\over 4m}.\label{ene}
\end{equation}
We can compute the value of $h$ from the initial conditions. Thus, for initial 
positions $x(0), y(0)$ and initial velocities $\dot x(0)=\dot y(0)=0$, the energy 
constant is $h=8m^2x(0)/y(0)>0$. 

Assuming that $x\to 0$ and $y\to 1$, equations (\ref{yeq}) imply that $\ddot x(t)\to -m<0$ and $\ddot y(t)\to -h/4m<0$, which means that the forces are finite at the collision-antipodal configuration. We are thus in the case (c) of
Lemma \ref{singlemma}, so to determine the direction of motion for $m_2$ when
it comes close to $(0,1)$, we need to take into account the ratio ${\ddot y/ \ddot x}$, which tends to $h/4m^2$ as $x\to 0$. Since $h=8m^2x(0)/y(0)$, $\lim_{x\to 0}({\ddot y/ \ddot x})=2x(0)/y(0)$. Then $2x(0)/y(0)<y(0)/x(0)$ for any $x(0)$ and $y(0)$ with $0<x(0)<1/\sqrt{3}$ and the corresponding choice of $y(0)>0$ given by the constraint $x^2(0)+y^2(0)=1$. But the inequality $2x(0)/y(0)<y(0)/x(0)$ is equivalent to the condition $\ddot{y}(t_0)/\ddot{x}(t_0)<y(t_0)/x(t_0)$ in Lemma \ref{singlemma}(c), according to which the force pulls $m_2$ toward $(0,1)$. Therefore the velocity and the force acting on $m_2$ keep this body on the same path until the collision-antipodal configuration occurs. 

It is also clear from equation \eqref{ene} that the velocity is positive and finite at collision. Since the distance between the initial position and $(0,1)$ is bounded, $m_2$ collides with $m_1$ in finite time. Therefore the choice of masses with $M=4m$, initial positions $x(0),y(0)$ with $0<x(0)<1/\sqrt{3}$ and the corresponding value of $y(0)$, and initial velocities $\dot{x}(0)=\dot{y}(0)=0$, leads to a solution that remains analytic at the collision-antipodal configuration, so the motion naturally
extends beyond this point.
\end{proof}


\section{ Relative equilibria in ${\bf S}^2$}

In this section we will prove a few results related to fixed points and elliptic relative equilibria in ${\bf S}^2$. Since, by Euler's theorem (see Appendix), every element of the group $SO(3)$ can be written, in an orthonormal basis, as a rotation about the $z$ axis, we can define elliptic relative equilibria as follows.
\begin{definition} 
An elliptic relative equilibrium in ${\bf S}^2$ is a solution of the form ${\bf q}_i=(x_i,y_i,z_i)$, $i=1,\dots, n$, of equations (\ref{coordS}) with $x_i=r_i\cos(\omega t+\alpha_i), y_i=r_i\sin(\omega t+\alpha_i), z_i={\rm constant},$ where $\omega, \alpha_i,$ and $r_i$, with $0\le r_i=(1-z_i^2)^{1/2}\le 1,\ i=1,\dots,n,$ are constants.\label{reS}
\end{definition}

Notice that although the equations of motion don't have an integral of the
center of mass, a ``weak'' property of this kind occurs for elliptic relative
equilibria. Indeed, it is easy to see that if all the bodies are at all times 
on one side of a plane containing the rotation axis, then the integrals of the angular momentum are violated. This happens because under such circumstances the vector representing the total angular momentum cannot be zero or parallel to the
$z$ axis.


\subsection{Fixed points}
The simplest solutions of the equations of motion are fixed points. They
can be seen as trivial relative equilibria that correspond to $\omega=0$. In
terms of the equations of motion, we can define them as follows.
\begin{definition}
A solution of system (\ref{HamS}) is called a fixed point if 
$$\nabla_{{\bf q}_i}U_1({\bf q})(t)={\bf p}_i(t)={\bf 0}\ \ {\rm for} \ {\rm all}\ \
t\in{\bf R}\ \ {\rm and}\ \ i=1,\dots,n.$$
\end{definition}
Let us start with finding the simplest fixed points, namely those that occur
when all the masses are equal.


\begin{theorem}
Consider the $n$-body problem in ${\bf S}^2$ with $n$ odd. If the masses are all equal, the regular $n$-gon lying on any geodesic is a fixed point of the equations of motion. For $n=4$, the regular tetrahedron is a fixed point too. \label{fix}
\end{theorem}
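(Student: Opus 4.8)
The plan is to reduce the statement to the vanishing of the gravitational force on each body and then read off that vanishing from the symmetry of the two configurations. Since the bodies start at rest, we have ${\bf p}_i(0)={\bf 0}$, and the constant configuration ${\bf q}(t)\equiv{\bf q}(0)$ is a solution of \eqref{eqS} exactly when $\nabla_{{\bf q}_i}U_1({\bf q})={\bf 0}$ for every $i$: with zero velocities the equations of motion collapse to $m_i\ddot{\bf q}_i=\nabla_{{\bf q}_i}U_1({\bf q})$, while both constraints ${\bf q}_i\cdot{\bf q}_i=1$ and ${\bf q}_i\cdot\dot{\bf q}_i=0$ hold automatically, and the momenta stay zero for all $t$. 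So the theorem amounts to showing that the gradient \eqref{grad} with $\kappa=1$,
\[
\nabla_{{\bf q}_i}U_1({\bf q})=\sum_{j\ne i}\frac{m^2\,[{\bf q}_j-({\bf q}_i\cdot{\bf q}_j){\bf q}_i]}{[1-({\bf q}_i\cdot{\bf q}_j)^2]^{3/2}},
\]
vanishes at each vertex of the two configurations. My first step would be to verify that these configurations avoid the singular set $\bf\Delta$: for the regular $n$-gon on a great circle with $n$ odd, no two vertices coincide or are antipodal, so every ${\bf q}_i\cdot{\bf q}_j\in(-1,1)$ and the gradient is finite. This is precisely where oddness enters, since an even $n$-gon would contain a diametrically opposite pair and hence an antipodal singularity; the tetrahedron has all mutual dot products equal to $-1/3$ and is likewise singularity-free.

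For the great-circle $n$-gon I would place the bodies at ${\bf q}_k=(\cos\tfrac{2\pi k}{n},\sin\tfrac{2\pi k}{n},0)$. By the rotational symmetry about the $z$-axis, which cyclically permutes the equal masses, it suffices to treat $i=0$, where ${\bf q}_0=(1,0,0)$. Writing $\theta_j=2\pi j/n$, the tangential projection simplifies to ${\bf q}_j-({\bf q}_0\cdot{\bf q}_j){\bf q}_0=(0,\sin\theta_j,0)$, so only the $y$-component of the force survives, equal to $m^2\sum_{j=1}^{n-1}\sin\theta_j/|\sin\theta_j|^3$. Pairing the index $j$ with $n-j$, which are distinct for every $j$ precisely because $n$ is odd, and using $\sin\theta_{n-j}=-\sin\theta_j$, the two terms cancel and the sum is zero. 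Equivalently, and perhaps more transparently, I would argue by equivariance of the force under isometries permuting equal masses: the reflections $(x,y,z)\mapsto(x,-y,z)$ and $(x,y,z)\mapsto(x,y,-z)$ both fix ${\bf q}_0$ and map the configuration to itself, forcing the $y$- and $z$-components of $\nabla_{{\bf q}_0}U_1$ to vanish, while Euler's formula \eqref{eul} gives ${\bf q}_0\cdot\nabla_{{\bf q}_0}U_1=0$ and so kills the remaining $x$-component.

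For the regular tetrahedron I would take the standard vertices $\tfrac{1}{\sqrt3}(\pm1,\pm1,\pm1)$ with an even number of minus signs, for which ${\bf q}_i\cdot{\bf q}_j=-1/3$ for all $i\ne j$ and $\sum_{k}{\bf q}_k={\bf 0}$. Fixing $i$, the common denominator $[1-1/9]^{3/2}$ factors out, leaving $\sum_{j\ne i}\bigl[{\bf q}_j+\tfrac13{\bf q}_i\bigr]=\bigl(\sum_{j\ne i}{\bf q}_j\bigr)+{\bf q}_i=-{\bf q}_i+{\bf q}_i={\bf 0}$; since the symmetry group of the tetrahedron acts transitively on its vertices, this handles every $i$. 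There is no deep obstacle in any of this: the only genuine points requiring care are confirming that each configuration is singularity-free — the step where the hypothesis $n$ odd is indispensable — and correctly bookkeeping the tangential projections, after which the vanishing follows immediately from the reflection cancellation in the $n$-gon case and the centroid identity $\sum_k{\bf q}_k={\bf 0}$ in the tetrahedral case.
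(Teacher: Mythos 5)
Your proposal is correct and follows essentially the same route as the paper: both reduce the claim to the vanishing of $\nabla_{{\bf q}_i}U_1$ at a configuration of bodies at rest and then obtain that vanishing from the symmetry of the configuration, your pairing of $j$ with $n-j$ being precisely the paper's cancellation of opposite force pairs. Your version is simply more explicit --- the coordinate computation, the reflection-equivariance variant, and the centroid identity $\sum_k{\bf q}_k={\bf 0}$ for the tetrahedron make concrete what the paper asserts by symmetry, and you correctly isolate where the oddness of $n$ enters (absence of antipodal pairs, hence nonsingularity and distinctness of the paired indices).
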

\begin{proof}
Assume that $m_1=m_2=\dots =m_n$, and consider an $n$-gon with an odd number
of sides inscribed in a geodesic of ${\bf S}^2$ with a body, initially at rest, at each 
vertex. In general, two forces act on the body of mass $m_i$: the force $\nabla_{{\bf q}_i}U_1({\bf q})$, which is due to the interaction with the other bodies, and the force $-m_i(\dot{\bf q}_i\cdot\dot{\bf q}_i){\bf q}_i$, which is due to the constraints. The latter force is zero at $t=0$ because the bodies are initially at rest. Since ${\bf q}_i\cdot\nabla_{{\bf q}_i}U_1({\bf q})=0$,
it follows that $\nabla_{{\bf q}_i}U_1({\bf q})$ is orthogonal to ${\bf q}_i$, and thus tangent to ${\bf S}^2$. Then the
symmetry of the $n$-gon implies that, at the initial moment $t=0$, $\nabla_{{\bf q}_i}U_1({\bf q})$ is the sum of pairs of forces, each pair consisting of opposite
forces that cancel each other. This means that $\nabla_{{\bf q}_i}U_1({\bf q})={\bf 0}$. Therefore, from the equations of motion and the fact that
the bodies are initially at rest, it follows that 
$$
\ddot{\bf q}_i(0)=-(\dot{\bf q}_i(0)\cdot\dot{\bf q}_i(0)){\bf q}_i(0)={\bf 0}, \ \ i=1,\dots,n.
$$
But then no force acts on the body of mass $m_i$ at time $t=0$, consequently the body doesn't move. So $\dot{\bf q}_i(t)={\bf 0}$ for all $t\in{\bf R}$. Then $\ddot{\bf q}_i(t)={\bf 0}$
for all $t\in{\bf R}$, therefore $\nabla_{{\bf q}_i}U_1({\bf q})(t)={\bf 0}$ for all $t\in{\bf R}$,
so the $n$-gon is a fixed point of equations (\ref{coordS}).

Notice that if $n$ is even, the $n$-gon has $n/2$ pairs of antipodal vertices. Since
antipodal bodies introduce singularities into the equations of motion, only the $n$-gons with an odd number of vertices are fixed points of equations (\ref{coordS}).

The proof that the regular tetrahedron is a fixed point can be merely done by
computing that 4 bodies of equal masses with initial coordinates given by
${\bf q}_1=(0,0,1), {\bf q}_2=(0,2\sqrt{2}/3, -1/3), 
{\bf q}_3=(-2/\sqrt{6},-\sqrt{2}/3,-1/3), {\bf q}_4=
(2/\sqrt{6},-\sqrt{2}/3,-1/3)$,
satisfy system (\ref{coordS}), or by noticing that the forces acting on each 
body cancel each other because of the involved symmetry. 
\end{proof}

\begin{remark}
If equal masses are placed at the vertices of the other four regular polyhedra: octahedron (6 bodies), cube (8 bodies), dodecahedron (12 bodies), and icosahedron (20 bodies), they do not form fixed points because antipodal singularities occur in 
each case.
\end{remark}

\begin{remark}
In the proof of Theorem \ref{singularity}, we discovered that if one body
has mass $m$ and the other two mass $M=2m$, then the isosceles triangle 
with the vertices at $(0,-1,0)$, $(-\sqrt{2}/2,\sqrt{2}/2,0)$, and $(\sqrt{2}/2,\sqrt{2}/2,0)$ is a fixed point. Therefore one might expect that fixed points can be found for any given masses. But, as formula \eqref{incond} shows, this is not the case. Indeed, if one body has mass $m$ and the other two have masses $M\ge4m$, there is no configuration (which must be isosceles due to symmetry) that corresponds to a fixed point since $\ddot x$ and $\ddot y$ are never zero. This observation proves that in the 3-body problem, there are choices of masses for which the equations of motion lack fixed points.\label{rem}
\end{remark}

The following statement is an obvious consequence of the proof given for
Theorem \ref{fix}.


\begin{cor}
Consider an odd number of equal bodies, initially at the vertices of a regular $n$-gon
inscribed in a great circle of ${\bf S}^2$, and assume that the solution generated from
this initial position maintains the same relative configuration for all times. Then,  for
all $t\in{\bf R}$, this solution satisfies the conditions $\nabla_{{\bf q}_i}U_1({\bf q}(t))={\bf 0},\ i=1,\dots,n$.\label{cor1}
\end{cor}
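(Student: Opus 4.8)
The plan is to reduce the statement to the symmetry computation already carried out in the proof of Theorem \ref{fix}, observing that the only property used there was the instantaneous \emph{shape} of the configuration, and not the fact that the bodies happened to be at rest. First I would record the crucial structural fact: the gradient $\nabla_{{\bf q}_i}U_1({\bf q})$ in \eqref{grad} is a function of the positions ${\bf q}=({\bf q}_1,\dots,{\bf q}_n)$ alone and does not involve the velocities. Consequently, to conclude that $\nabla_{{\bf q}_i}U_1({\bf q}(t))={\bf 0}$ it is enough to know the configuration ${\bf q}(t)$ at the instant $t$, regardless of how fast or in what manner the bodies move.

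Next I would interpret the hypothesis. Saying that the solution maintains the same relative configuration for all times means that, for every $t$, the configuration ${\bf q}(t)$ is congruent to the initial regular $n$-gon, i.e.\ it is the image of ${\bf q}(0)$ under some isometry of ${\bf S}^2$. Since isometries of ${\bf S}^2$ map great circles to great circles and preserve geodesic distances, the bodies occupy, at each instant $t$, the vertices of a regular $n$-gon inscribed in some great circle, still carrying equal masses.

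Then I would apply, for each fixed $t$, the symmetry argument from the proof of Theorem \ref{fix}. By Euler's formula \eqref{eul}, ${\bf q}_i\cdot\nabla_{{\bf q}_i}U_1({\bf q})=0$, so the force on $m_i$ is tangent to ${\bf S}^2$; and because all bodies lie in the plane of the great circle, each summand ${\bf q}_j-({\bf q}_i\cdot{\bf q}_j){\bf q}_i$ in \eqref{grad} lies along that circle. Since $n$ is odd there is no antipodal (hence singular) vertex, and the remaining $n-1$ bodies split into $(n-1)/2$ pairs symmetric with respect to the diameter through ${\bf q}_i$; the two members of each pair are equidistant from ${\bf q}_i$ along the circle and of equal mass, so their contributions to \eqref{grad} are equal in magnitude and opposite in direction, and therefore cancel. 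Hence $\nabla_{{\bf q}_i}U_1({\bf q}(t))={\bf 0}$ for every $i$ and every $t$, which is exactly the assertion. Note that this does \emph{not} force the motion to be a fixed point; the $n$-gon may well rotate, and the corollary records only that the interaction force vanishes identically along any such rigid motion.

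The one point deserving a word of justification---and the place where some care is needed---is the passage from ``same relative configuration'' to ``regular $n$-gon on a great circle at every instant.'' This is a rigidity step: a configuration whose mutual distances coincide with those of the initial great-circle $n$-gon is an isometric copy of it, and the isometry-invariance of the class of great circles then keeps the bodies coplanar with a great circle. Once this is granted, the corollary is immediate, since the vanishing of the gradient obtained in the proof of Theorem \ref{fix} was a purely configurational, velocity-free conclusion.
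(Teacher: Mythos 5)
Your proposal is correct and is essentially the paper's intended argument: the paper states the corollary as "an obvious consequence of the proof given for Theorem \ref{fix}," and what you have written is exactly the elaboration of that remark --- the gradient \eqref{grad} depends only on the configuration, and the pairwise cancellation by symmetry from the proof of Theorem \ref{fix} applies verbatim at each instant once the configuration is known to remain a regular $n$-gon on a great circle.
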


It is interesting to see that if the bodies are within a hemisphere (meaning half
a sphere and its geodesic boundary), fixed points do not occur if at least one body is
not on the boundary. Let us formally state and prove this result.


\begin{theorem}
Consider an initial nonsingular configuration of the $n$-body problem in ${\bf S}^2$ 
for which all bodies lie within a hemisphere, meant to include its geodesic
boundary, with at least one body not on this geodesic. Then this configuration
is not a fixed point.\label{nofixS}
\end{theorem}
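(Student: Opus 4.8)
The plan is to place the problem into convenient coordinates and then extract a single scalar obstruction from the hypothetical fixed-point equations. Since the equations of motion in $\mathbf{S}^2$ are invariant under the action of $SO(3)$, I would first apply a rotation so that the hemisphere containing all the bodies becomes the closed upper hemisphere $\{z\ge 0\}$; thus $z_i\ge 0$ for every $i$, and the hypothesis that at least one body lies off the boundary geodesic $z=0$ means $z_k>0$ for some index $k$. The nonsingularity of the configuration guarantees $c_{ij}:={\bf q}_i\cdot{\bf q}_j\in(-1,1)$ for all $i\ne j$, so every quantity below is finite with strictly positive denominators.

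Next I would argue by contradiction. Suppose the configuration is a fixed point, so $\nabla_{{\bf q}_i}U_1({\bf q})={\bf 0}$ for all $i$; in particular the third component of each gradient vanishes. Using the gradient as it appears in \eqref{HamS}, the $z$-component of the force on $m_i$ is
\[
\big[\nabla_{{\bf q}_i}U_1({\bf q})\big]_z=\sum_{j\ne i}\frac{m_im_j\,(z_j-c_{ij}z_i)}{(1-c_{ij}^2)^{3/2}}.
\]
The idea is \emph{not} to analyze a single body (the naive choice of the topmost body fails, because two points of a hemisphere may be more than $\pi/2$ apart, making $c_{ij}$ negative and the individual sign of $z_j-c_{ij}z_i$ ambiguous), but to sum all these equations. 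Summing over $i$ and pairing the ordered terms $(i,j)$ and $(j,i)$ for each unordered pair produces the symmetric expression
\[
\sum_{i=1}^n\big[\nabla_{{\bf q}_i}U_1({\bf q})\big]_z
=\sum_{1\le i<j\le n}\frac{m_im_j\,\big[(z_j-c_{ij}z_i)+(z_i-c_{ij}z_j)\big]}{(1-c_{ij}^2)^{3/2}}
=\sum_{1\le i<j\le n}\frac{m_im_j\,(z_i+z_j)}{(1-c_{ij})^{1/2}(1+c_{ij})^{3/2}},
\]
where the last equality uses $(z_j-c_{ij}z_i)+(z_i-c_{ij}z_j)=(1-c_{ij})(z_i+z_j)$ together with the factorization $(1-c_{ij}^2)^{3/2}=(1-c_{ij})^{3/2}(1+c_{ij})^{3/2}$.

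Finally I would read off the contradiction. Each denominator $(1-c_{ij})^{1/2}(1+c_{ij})^{3/2}$ is strictly positive by nonsingularity, and each numerator $m_im_j(z_i+z_j)$ is nonnegative because $z_i,z_j\ge 0$; moreover every pair containing the index $k$ contributes $z_k+z_j\ge z_k>0$, so at least one term is strictly positive (here I use $n\ge 2$). Hence the right-hand side is strictly positive, whereas the left-hand side is $0$ because each gradient was assumed to vanish. This contradiction shows the configuration cannot be a fixed point. The only genuinely delicate point in the argument is the realization that the per-body force need not point ``downward,'' so one must symmetrize over all pairs to make the cross terms cancel and expose the manifestly positive form; the remaining ingredients are the $SO(3)$ reduction to the standard hemisphere and the elementary but essential use of the nonsingularity hypothesis to keep the denominators positive.
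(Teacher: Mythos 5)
Your proof is correct, but it takes a genuinely different route from the paper's. The paper argues locally at an extremal body: after rotating the hemisphere to $\{z\ge 0\}$, it picks a body $m_1$ with the \emph{smallest} $z$-coordinate and shows directly that ${\partial U_1 / \partial z_1}>0$, since for $z_j\ge z_1\ge 0$ and $c_{1j}\in(-1,1)$ each numerator obeys $z_j-c_{1j}z_1\ge z_j-z_1\ge 0$ (when $c_{1j}\ge 0$ one has $c_{1j}z_1\le z_1\le z_j$, and when $c_{1j}<0$ the term is $z_j+|c_{1j}|z_1\ge 0$), with strict positivity for the pair involving a body of positive height. So the sign ambiguity you flag is real for the \emph{topmost} body but disappears for the bottommost one, and the per-body argument does go through with that choice. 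Your alternative is to symmetrize: summing the $z$-components of all the gradients and pairing $(i,j)$ with $(j,i)$ gives
\begin{equation*}
\sum_{i=1}^n\big[\nabla_{{\bf q}_i}U_1({\bf q})\big]_z=\sum_{1\le i<j\le n}\frac{m_im_j\,(z_i+z_j)}{(1-c_{ij})^{1/2}(1+c_{ij})^{3/2}},
\end{equation*}
which is manifestly positive under the hypotheses and contradicts the vanishing of every gradient. This global version avoids selecting an extremal body and makes positivity visible in one line, at the cost of invoking all $n$ gradient equations rather than one; both arguments rest on the same ingredients, namely the $SO(3)$ reduction to the standard hemisphere and nonsingularity to keep $c_{ij}\in(-1,1)$.
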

\begin{proof}
Without loss of generality we can consider the initial configuration of the
bodies $m_1,\dots, m_n$ in the hemisphere $z\ge 0$, whose boundary 
is the geodesic $z=0$. Then at least one body has the smallest $z$
coordinate, and let $m_1$ be one of these bodies. Also, at least one
body has its $z$ coordinate positive, and let $m_2$ be one of them. 
Since all initial velocities are zero, only the mutual forces between
bodies act on $m_1$. Then, according to the equations of motion (\ref{eqcoordS}),
$m_1\ddot{z}_1(0)={\partial\over\partial z_1}U_1({\bf q}(0))$.
But as no body has its $z$ coordinate smaller than $z_1$, 
the terms contained in the expression of ${\partial\over\partial z_1}U_1({\bf q}(0))$ that involve interactions
between $m_1$ and $m_i$ are all larger than or equal to zero for
$i=3,4,\dots,n$, while the term involving $m_2$ is strictly positive. 
Therefore ${\partial\over\partial z_1}U_1({\bf q}(0))>0$, so $m_1$ 
moves upward the hemisphere. Consequently the initial configuration 
is not a fixed point.
\end{proof}

\subsection{Polygonal solutions} 
We will further show that fixed points lying on geodesics of spheres can generate
relative equilibria.


\begin{theorem}
Consider a fixed point given by the masses $m_1, m_2,\dots, m_n$ that
lie on a great circle of ${\bf S}^2$. Then for every nonzero angular velocity,
this configuration generates a relative equilibrium along the great circle.
\label{fixrel}
\end{theorem}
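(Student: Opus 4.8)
The plan is to exhibit an explicit rotating solution and to verify that it satisfies the equations of motion \eqref{coordS}, transporting the fixed-point balance condition along the orbit by means of the rotational invariance of the force function. First I would use Euler's theorem to place the great circle in the equatorial plane $z=0$, so that the given fixed point is ${\bf q}_i(0)=(\cos\alpha_i,\sin\alpha_i,0)$, $i=1,\dots,n$, for suitable angles $\alpha_i$; here $z_i=0$ and $r_i=(1-z_i^2)^{1/2}=1$. The fixed-point hypothesis then reads $\nabla_{{\bf q}_i}U_1({\bf q}(0))={\bf 0}$ for every $i$.

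Next I would propose the candidate ${\bf q}_i(t)=(\cos(\omega t+\alpha_i),\sin(\omega t+\alpha_i),0)$, which is the rigid rotation ${\bf q}_i(t)=R(\omega t){\bf q}_i(0)$ about the $z$ axis, and substitute it into \eqref{eqS}. A direct computation gives $\dot{\bf q}_i\cdot\dot{\bf q}_i=\omega^2$ and $\ddot{\bf q}_i=-\omega^2{\bf q}_i$, so the inertial term on the left, $m_i\ddot{\bf q}_i=-m_i\omega^2{\bf q}_i$, is cancelled exactly by the constraint term $-m_i(\dot{\bf q}_i\cdot\dot{\bf q}_i){\bf q}_i=-m_i\omega^2{\bf q}_i$ on the right. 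Hence the candidate solves the equations of motion if and only if $\nabla_{{\bf q}_i}U_1({\bf q}(t))={\bf 0}$ for all $t$ and all $i$.

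The remaining step, and the real content of the proof, is to show that this gradient vanishes along the whole orbit and not merely at $t=0$. For this I would invoke the $SO(3)$-equivariance of the gradient: since $U_1$ depends only on the inner products ${\bf q}_i\cdot{\bf q}_j$, which rotations preserve, the explicit form \eqref{grad} shows that $\nabla_{{\bf q}_i}U_1(R{\bf q})=R\,\nabla_{{\bf q}_i}U_1({\bf q})$ for any $R\in SO(3)$, because each numerator ${\bf q}_j-({\bf q}_i\cdot{\bf q}_j){\bf q}_i$ transforms by $R$ while the scalar denominator is invariant. Taking $R=R(\omega t)$ and ${\bf q}={\bf q}(0)$ yields $\nabla_{{\bf q}_i}U_1({\bf q}(t))=R(\omega t)\nabla_{{\bf q}_i}U_1({\bf q}(0))={\bf 0}$ by the fixed-point hypothesis. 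The equations of motion are therefore satisfied for every $\omega$, and since the solution has exactly the form of Definition \ref{reS} with $r_i=1$ and $z_i=0$, it is an elliptic relative equilibrium along the great circle.

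I expect no serious obstacle here; the theorem is in essence the symmetry statement that rotating a force-balanced configuration about its axis of symmetry converts the centrifugal constraint force into precisely the centripetal acceleration needed to keep each body on its circular path. The one point to handle with care is the legitimacy of using the simplified gradient \eqref{grad} in place of \eqref{gra}: this is justified because the orbit remains on the sphere (the constraint ${\bf q}_i\cdot{\bf q}_i=1$ is preserved by $R(\omega t)$) and because we neither differentiate the gradient nor exploit its homogeneity, so the two forms agree on the configurations in question.
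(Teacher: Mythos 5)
Your proposal is correct and follows essentially the same route as the paper's proof: reduce to the equator, substitute the rigidly rotating ansatz, observe that the inertial term is exactly cancelled by the constraint term since $\dot{\bf q}_i\cdot\dot{\bf q}_i=\omega^2$, and then show that $\nabla_{{\bf q}_i}U_1({\bf q}(t))$ remains zero along the orbit. The only difference is that where the paper justifies this last persistence step informally (``the rotation makes the system move like a rigid body, so the gravitational forces remain in equilibrium''), you supply the precise $SO(3)$-equivariance identity $\nabla_{{\bf q}_i}U_1(R{\bf q})=R\,\nabla_{{\bf q}_i}U_1({\bf q})$, which is a welcome sharpening of the same idea rather than a different argument.
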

\begin{proof}
Without loss of generality, we assume that the great circle is the equator $z=0$
and that for some given masses $m_1, m_2,\dots, m_n$ there exist $\alpha_1,\alpha_2,\dots,\alpha_n$ such that the configuration ${\bf q}=({\bf q}_1,\dots, {\bf q}_n)$ given by ${\bf q}_i=(x_i,y_i, 0), i=1,\dots,n$, with
\begin{equation}
x_i=\cos(\omega t +\alpha_i), y_i=\sin(\omega t +\alpha_i), \ i=1,\dots, n,
\label{rot}
\end{equation}
is a fixed point for $\omega =0$. This configuration can also be 
interpreted as being ${\bf q}(0)$, i.e.~the solution ${\bf q}$ at $t=0$ for any
$\omega\ne 0$.
So we can conclude that $\nabla_{{\bf q}_i}U_1({\bf q}(0))={\bf 0},\ i=1,\dots,n$. 
But then, for $t=0$, the equations of motion \eqref{eqcoordS} reduce to
\begin{equation}
\begin{cases}
\ddot{x}_i=-(\dot{x}^2_i+\dot{y}^2_i)x_i\cr
\ddot{y}_i=-(\dot{x}^2_i+\dot{y}^2_i)y_i,
\label{tralala}
\end{cases}
\end{equation}
$i=1,\dots,n.$ 
Notice, however, that
$\dot{x}_i=-\omega\sin(\omega t+\alpha_i), \ddot{x}_i=-\omega^2\cos(\omega t+\alpha_i), \dot{y}_i=-\omega\cos(\omega t+\alpha_i)$, and
$\ddot{y}_i=-\omega^2\sin(\omega t+\alpha_i),$ therefore $\dot{x}_i^2+
\dot{y}_i^2=\omega^2$. Using these computations, it is easy to see that 
$\bf q$ given by \eqref{rot} is a solution of \eqref{tralala} for every $t$, 
so no forces due to the constraints act on the bodies, neither at $t=0$ nor later. Since $\nabla_{{\bf q}_i}U_1({\bf q}(0))={\bf 0},\ i=1,\dots,n$, it follows
that the gravitational forces are in equilibrium at the initial moment, so
no gravitational forces act on the bodies either. Consequently, the rotation
imposed by $\omega\ne 0$ makes the system move like a rigid body, so
the gravitational forces further remain in equilibrium, consequently
$\nabla_{{\bf q}_i}U_1({\bf q}(t))={\bf 0},\ i=1,\dots,n$, for all $t$.
Therefore $\bf q$ given by \eqref{rot} satisfies equations \eqref{eqcoordS}.
Then, by Definition \ref{reS}, $\bf q$ is an elliptic relative equilibrium.
\end{proof}

The following result shows that relative equilibria generated by fixed points
obtained from regular $n$-gons on a great circle of ${\bf S}^2$ can occur only 
when the bodies rotate along the great circle.


\begin{theorem}
Consider an odd number of equal bodies, initially at the vertices of a regular $n$-gon
inscribed in a great circle of ${\bf S}^2$. Then the only elliptic relative equilibria
that can be generated from this configuration are the ones that rotate in
the plane of the original great circle.\label{rengon}
\end{theorem}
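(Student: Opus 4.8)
The plan is to pin down the rotation axis of any elliptic relative equilibrium born from the regular $n$-gon by feeding the rotating ansatz of Definition~\ref{reS} into the equations of motion and exploiting the fact that such a symmetric configuration keeps its gravitational forces balanced.

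First I would normalize coordinates. By Euler's theorem (see Appendix), every element of $SO(3)$ is, in a suitable orthonormal frame, a rotation about the $z$ axis, so without loss of generality the relative equilibrium has the form of Definition~\ref{reS}, namely $x_i=r_i\cos(\omega t+\alpha_i)$, $y_i=r_i\sin(\omega t+\alpha_i)$, $z_i=\mbox{constant}$, with $\omega\ne0$ and $r_i=(1-z_i^2)^{1/2}$. Such a motion is a rigid rotation of ${\bf S}^2$, hence it preserves all mutual distances and carries the regular $n$-gon onto a congruent regular $n$-gon for every $t$; in particular it maintains the same relative configuration, so Corollary~\ref{cor1} applies and yields $\nabla_{{\bf q}_i}U_1({\bf q}(t))={\bf 0}$ for every $i$ and every $t$.

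Next I would substitute the ansatz into the equations of motion \eqref{coordS}. Differentiation gives $\ddot{\bf q}_i=(-\omega^2x_i,-\omega^2y_i,0)$ and $\dot{\bf q}_i\cdot\dot{\bf q}_i=\omega^2 r_i^2=\omega^2(1-z_i^2)$. Since the gradient term now vanishes (independently of the mass factor), the equations reduce to $\ddot{\bf q}_i=-(\dot{\bf q}_i\cdot\dot{\bf q}_i){\bf q}_i$, that is,
\begin{equation*}
(-\omega^2x_i,-\omega^2y_i,0)=-\omega^2(1-z_i^2)(x_i,y_i,z_i),\qquad i=1,\dots,n.
\end{equation*}
Cancelling $\omega^2\ne0$ and reading off components produces $z_i^2x_i=0$, $z_i^2y_i=0$, and $z_i(1-z_i^2)=0$, whence $z_i\in\{0,\pm1\}$ for each body: every mass either lies on the equator $z=0$ or sits at a pole $(0,0,\pm1)$.

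Finally I would close the argument geometrically. All bodies lie on a single great circle $C$. If $C$ were not the equator, then $C$ would meet $\{z=0\}$ in exactly two antipodal points, so---because a regular $n$-gon with $n$ odd has no pair of antipodal vertices (as already used in Theorem~\ref{fix})---at most one vertex could carry $z_i=0$. The remaining $n-1\ge 2$ vertices would then have to occupy the two poles $(0,0,\pm1)$, which are themselves antipodal, contradicting once more the absence of antipodal pairs. Hence $C$ must be the equator, all $z_i=0$, and the rotation about the $z$ axis takes place in the plane of $C$; combined with Theorem~\ref{fixrel}, which exhibits exactly these in-plane rotations, this establishes the claim. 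I expect the only delicate point to be the justification that the rotating $n$-gon genuinely "maintains the same relative configuration," so that Corollary~\ref{cor1} may legitimately be invoked to annihilate the gravitational gradient; the concluding pole-exclusion is then a short combinatorial-geometric step.
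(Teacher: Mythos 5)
Your proof is correct and takes essentially the same route as the paper's: the same rotating ansatz, the same appeal to Corollary~\ref{cor1} to annihilate the gradient, and the same component-wise reduction of \eqref{coordS} forcing $r_i(1-r_i^2)=0$, i.e.\ $z_i\in\{0,\pm1\}$. Your explicit exclusion of the poles via the no-antipodal-pairs argument is a small but welcome refinement of the paper's one-word dismissal of the case $r_i=0$ as trivial.
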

\begin{proof} 
Without loss of generality, we can prove this result for the equator $z=0$. Consider
therefore an elliptic relative equilibrium solution of the form
\begin{equation}
x_i=r_i\cos(\omega t+\alpha_i), \  y_i=r_i\sin(\omega t+\alpha_i), \ z_i=\pm
(1-r_i^2)^{1/2},\label{check}
\end{equation}
$ i=1,\dots,n,$ with $+$ taken for $z_i>0$ and $-$ for $z_i<0$. The only condition we impose on this solution is that $r_i$ and $\alpha_i,\ i=1,\dots,n$, are chosen
such that the configuration is a regular $n$-gon inscribed in a moving great circle of 
${\bf S}^2$ at all times. 
Therefore the plane of the $n$-gon can have 
any angle with, say, the $z$-axis. This solution has the derivatives
$$\dot{x}_i=-r_i\omega\sin(\omega t+\alpha_i), \  \dot{y}_i=r_i\omega\cos(\omega t+\alpha_i), \ \dot{z}_i=0,\  i=1,\dots,n,$$
$$\ddot{x}_i=-r_i\omega^2\cos(\omega t+\alpha_i), \  \ddot{y}_i=-r_i\omega^2\sin(\omega t+\alpha_i), \  \ddot{z}_i=0,\ i=1,\dots,n.$$
Then 
$$\dot{x}_i^2+\dot{y}_i^2+\dot{z}_i^2=r_i^2\omega^2, \ i=1,\dots,n.$$
Since, by Corollary \ref{cor1}, any $n$-gon solution with $n$ odd satisfies the conditions 
$$\nabla_{{\bf q}_i}U_1({\bf q})={\bf 0},\ i=1,\dots,n,$$
system (\ref{coordS}) reduces to 
$$
\begin{cases}
\ddot{x}_i=-(\dot{x}_i^2+\dot{y}_i^2+\dot{z}_i^2)x_i,\cr
\ddot{y}_i=-(\dot{x}_i^2+\dot{y}_i^2+\dot{z}_i^2)y_i,\cr
\ddot{z}_i=-(\dot{x}_i^2+\dot{y}_i^2+\dot{z}_i^2)z_i,\ i=1,\dots,n.\cr
\end{cases}
$$
Then the substitution of (\ref{check}) into the above equations leads to:
$$
\begin{cases}
r_i(1-r_i^2)\omega^2\cos(\omega t+\alpha_i)=0,\cr
r_i(1-r_i^2)\omega^2\sin(\omega t+\alpha_i)=0,\ i=1,\dots,n.\cr
\end{cases}
$$
But assuming $\omega\ne 0$, this system is nontrivially satisfied if and only
if $r_i=1,$ conditions which are equivalent to $z_i=0,\ i=1,\dots,n.$
Therefore the bodies must rotate along the equator $z=0$.
\end{proof}

Theorem \ref{rengon} raises the question whether elliptic relative equilibria given
by regular polygons can rotate on other curves than geodesics. The answer
is given by the following result.


\begin{theorem}
Consider the $n$-body problem with equal masses in ${\bf S}^2$. Then, 
for any $n$ odd, $m>0$ and $z\in(-1,1)$, there are a positive and a negative
$\omega$ that produce elliptic relative equilibria in which the bodies are at the
vertices of an $n$-gon rotating in the plane $z=$ constant. If $n$ is even, this
property is still true if we exclude the case $z=0$. \label{ngonS}
\end{theorem}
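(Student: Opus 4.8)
The plan is to exhibit the rotating regular $n$-gon explicitly as a solution of \eqref{coordS} and to read off the admissible angular velocity from the equations themselves. I place the bodies at
\[
x_i=r\cos(\omega t+\alpha_i),\qquad y_i=r\sin(\omega t+\alpha_i),\qquad z_i=z,
\]
with $r=(1-z^2)^{1/2}$ and $\alpha_i=2\pi(i-1)/n$, so the configuration is a regular $n$-gon inscribed in the circle $z=\mathrm{constant}$ and rotating rigidly about the $z$-axis, consistent with Definition \ref{reS}. Differentiating gives $\ddot x_i=-\omega^2x_i$, $\ddot y_i=-\omega^2y_i$, $\ddot z_i=0$ and $\dot x_i^2+\dot y_i^2+\dot z_i^2=r^2\omega^2$, while the inner products $c_{ij}:={\bf q}_i\cdot{\bf q}_j=r^2\cos(\alpha_i-\alpha_j)+z^2$ are constant in time, which is precisely what permits a relative equilibrium.

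Since system \eqref{coordS} is equivariant under rotations about the $z$-axis, and the configuration is carried into itself (up to relabeling) both by the time flow and by a rotation through $2\pi/n$, it suffices to verify the equations for one body at one instant; I take body $1$ with $\alpha_1=0$, so ${\bf q}_1=(r,0,z)$. Writing $\theta_k=2\pi k/n$ and $c_k=r^2\cos\theta_k+z^2$ for the inner product of body $1$ with body $1+k$, the $y$-equation holds automatically, because the contributions $y_j$ cancel in the pairs $k\leftrightarrow n-k$ while $y_1=0$, so both sides vanish. The $z$-equation, using $z_j=z$, $\ddot z_1=0$ and $z_j-c_{1j}z_1=z(1-c_{1j})$, collapses (after dividing by $z\ne0$) to
\[
r^2\omega^2=\sum_{k=1}^{n-1}\frac{m(1-c_k)}{(1-c_k^2)^{3/2}}
=\sum_{k=1}^{n-1}\frac{m}{(1-c_k)^{1/2}(1+c_k)^{3/2}}=:S,
\]
and it is satisfied trivially when $z=0$.

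The crux is to confirm that the $x$-equation imposes the \emph{same} condition on $\omega$, for otherwise no rigid rotation could exist. Here I would use the two elementary identities $1-r^2=z^2$ and $1-c_k=r^2(1-\cos\theta_k)$ to simplify the numerator $x_{1+k}-c_kx_1=rz^2(\cos\theta_k-1)=-(z^2/r)(1-c_k)$; substituting this into the $x$-equation and cancelling the common factor $-z^2$ (legitimate exactly when $z\ne0$) reduces it to $\omega^2r^2=S$, in perfect agreement with the $z$-equation. Thus the single scalar relation $\omega^2=S/r^2$ governs the problem, and it possesses the two real roots $\omega=\pm S^{1/2}/r$ as soon as $S$ is a finite positive number; this yields the required positive and negative $\omega$.

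It remains to check admissibility, namely that every $c_k\in(-1,1)$, which guarantees $0<S<\infty$. One has $c_k=1$ only when $\cos\theta_k=1$ (i.e.\ $k\equiv0$, excluded), whereas $c_k=-1$ forces $r^2\cos\theta_k=-1-z^2$, possible only when $z=0$ and $\theta_k=\pi$, that is for \emph{even} $n$ at the antipodal partner $k=n/2$. This is exactly the collision-free, antipodal-free regime: for $n$ odd every $z\in(-1,1)$ is allowed, while for $n$ even the value $z=0$ must be excluded, since it places two bodies at opposite ends of a diameter, i.e.\ on ${\bf\Delta}^-$. The borderline case $z=0$ with $n$ odd is not lost: there the $n$-gon is already a fixed point by Theorem \ref{fix}, and Theorem \ref{fixrel} then produces relative equilibria for every $\omega$, consistent with the degeneration of $\omega^2=S/r^2$ when the factor $z^2$ vanishes. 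The main obstacle is the $x$-equation simplification; once the identity $1-c_k=r^2(1-\cos\theta_k)$ is in hand, the matching of the $x$- and $z$-conditions is immediate.
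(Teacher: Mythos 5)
Your proposal is correct and follows essentially the same route as the paper: substitute the rotating $n$-gon ansatz into \eqref{coordS}, reduce to a single body by symmetry, show that the $x$- and $z$-equations both collapse to the single positive scalar condition $\omega^2r^2=S$, and obtain $\omega=\pm S^{1/2}/r$ from the positivity and finiteness of $S$ away from collision and antipodal configurations. Your explicit handling of the degenerate case $z=0$ with $n$ odd (where both equations are trivially satisfied and one falls back on Theorems \ref{fix} and \ref{fixrel}) is in fact slightly more careful than the paper's derivation, which divides by $z$ without comment.
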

\begin{proof}
There are two cases to discuss: (i) $n$ odd and (ii) $n$ even. 

(i) To simplify the presentation, we further denote the bodies by $m_i,
i=-s, -s+1,\dots, -1, 0, 1,\dots, s-1,s$, where $s$ is a positive integer,
and assume that they all have mass $m$.
Without loss of generality we can further substitute into equations (\ref{coordS}) a solution of the form
(\ref{check}) with $i$ as above, $\alpha_{-s}=-{4s\pi\over2s+1},
\dots,\alpha_{-1}=-{2\pi\over2s+1},\alpha_0=0$, $\alpha_1={2\pi\over2s+1},\dots, \alpha_s={4s\pi\over2s+1}$, $r:=r_i, z:=z_i$, and consider only the equations
for $i=0$. The study of this case suffices due to the involved symmetry, which yields 
the same conclusions for any value of $i$.

The equation corresponding to the $z_0$ coordinate takes the form
$$
\sum_{j=-s, j\ne 0}^s{m(z-k_{0j}z)\over(1-k_{0j}^2)^{3/2}}-r^2\omega^2z=0,
$$
where $k_{0j}=x_0x_j+y_0y_j+z_0z_j=\cos\alpha_j-z^2\cos\alpha_j+z^2$. Using
the fact that $r^2+z^2=1$, $\cos\alpha_j=\cos\alpha_{-j}$, and $k_{0j}=k_{0(-j)}$, this equation becomes
\begin{equation}
\sum_{j=1}^s{2(1-\cos\alpha_j)\over(1-k_{0j}^2)^{3/2}}={\omega^2\over m}.
\label{z0}
\end{equation}
Now we need to check whether the equations corresponding to $x_0$ and $y_0$
lead to the same equation. In fact, checking for $x_0$, and ignoring $y_0$, suffices 
due to the same symmetry reasons invoked earlier or the duality of the trigonometric
functions $\sin$ and $\cos$. The substitution of the the above functions into the first equation of (\ref{coordS}) leads us to
$$
(r^2-1)\omega^2\cos\omega t=\sum_{j=-s, j\ne 0}^s{m[\cos(\omega t+\alpha_j)
-k_{0j}\cos\omega t]\over(1-k_{0j}^2)^{3/2}}.
$$
A straightforward computation, which uses the fact that $r^2+z^2=1$, $\sin\alpha_j=-\sin\alpha_{-j}$,  $\cos\alpha_j=\cos\alpha_{-j}$, and $k_{0j}=k_{0(-j)}$, yields the same equation (\ref{z0}). Writing the denominator of
equation (\ref{z0}) explicitly, we are led to
\begin{equation}
\sum_{j=1}^s{2\over(1-\cos\alpha_j)^{1/2}(1-z^2)^{3/2}[2-(1-\cos\alpha_j)(1-z^2)]^{3/2}}
={\omega^2\over m}.
\end{equation}
The left hand side is always positive, so for any $m>0$ and $z\in(-1,1)$ fixed,
there are a positive and a negative $\omega$ that satisfy the equation. Therefore
the $n$-gon with an odd number of sides is an elliptic relative equilibrium.

(ii) To simplify the presentation when $n$ is even, we denote the bodies by $m_i,\ i=-s+1,\dots, -1, 0, 1,\dots, s-1,s$, where $s$ is a positive integer,
and assume that they all have mass $m$. Without loss of generality, we can
substitute into equations (\ref{coordS}) a solution of the form (\ref{check}) with
$i$ as above, $\alpha_{-s+1}={(-s+1)\pi\over s},\dots,\alpha_{-1}=-{\pi\over s}$,
$\alpha_0=0$, $\alpha_1={\pi\over s},\dots,\alpha_{s-1}={(s-1)\pi\over s}$,
$\alpha_s=\pi$, $r:=r_i$, $z:=z_i$, and consider as in the previous case only
the equations for $i=0$. Then using the fact that $k_{0j}=k_{0(-j)}$, $\cos\alpha_j=\cos\alpha_{-j}$, and $\cos\pi=-1$, a straightforward computation brings the equation corresponding to $z_0$ to the form
\begin{equation}
\sum_{j=1}^{s-1}{2(1-\cos\alpha_j)\over(1-k_{0j}^2)^{3/2}}+{2\over(1-k_{0s}^2)^{3/2}}=
{\omega^2\over m}.\label{form}
\end{equation}
Using additionally the relations $\sin\alpha_j=-\sin\alpha_{-j}$ and $\sin\pi=0$, 
we obtain for the equation corresponding to $x_0$ the same form (\ref{form}),
which---for $k_{0j}$ and $k_{0s}$ written explicitly---becomes
\begin{multline*}
\sum_{j=1}^{s-1}{2\over(1-\cos\alpha_j)^{1/2}(1-z^2)^{3/2}[2-(1-\cos\alpha_j)(1-z^2)]^{3/2}}\\
+{1\over 4z^2|z|(1-z^2)^{3/2}}={\omega^2\over m}.
\end{multline*}
Since the left hand side of this equations is positive and finite, given any $m>0$
and $z\in(-1,0)\cup(0,1)$, there are a positive and a negative $\omega$ that satisfy it.
So except for the case $z=0$, which introduces antipodal singularities, the
rotating $n$-gon with an even number of sides is an elliptic relative equilibrium.
\end{proof}

\subsection{Lagrangian solutions} The case $n=3$ presents particular interest in the Euclidean case because the equilateral triangle is an elliptic relative equilibrium for any values of the masses, not only when the masses are equal. But before we check whether this fact holds in ${\bf S}^2$, let us consider the case of three equal masses in more
detail.


\begin{cor}
Consider the 3-body problem with equal masses, $m:=m_1=m_2=m_3$, in ${\bf S}^2$.
Then for any $m>0$ and $z\in(-1,1)$, there are a positive and a negative
$\omega$ that produce elliptic relative equilibria in which the bodies are at the
vertices of an equilateral triangle that rotates in the plane $z=$ constant.
Moreover, for every $\omega^2/m$ there are two values of $z$ that lead
to relative equilibria if $\omega^2/m\in(8/\sqrt{3},\infty)\cup\{3\}$,
three values if $\omega^2/m=8/\sqrt{3}$, and four values if
$\omega^2/m\in(3,8/\sqrt{3})$.
\label{equilateralS}
\end{cor}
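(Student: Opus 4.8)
The plan is to specialize Theorem \ref{ngonS} to $n=3$ and then reduce the counting assertion to a one-variable calculus problem. First I would invoke the odd case of Theorem \ref{ngonS} with $s=1$, so that the three equal masses occupy the angular positions $\alpha_{-1}=-2\pi/3$, $\alpha_0=0$, $\alpha_1=2\pi/3$. The existence of a positive and a negative $\omega$ for every $m>0$ and $z\in(-1,1)$ is then immediate from that theorem; what remains is to make the relation between $\omega^2/m$ and $z$ explicit. Setting $s=1$ and $\cos\alpha_1=-1/2$ (so $1-\cos\alpha_1=3/2$) in the final displayed equation of the proof of Theorem \ref{ngonS}, and using the simplification $2-\tfrac32(1-z^2)=\tfrac12(1+3z^2)$, a direct computation collapses the single summand to
\begin{equation}
\frac{\omega^2}{m}=\frac{8}{\sqrt{3}\,[(1-z^2)(1+3z^2)]^{3/2}}=:g(z).
\label{geqn}
\end{equation}

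Next I would analyze $g$ on $(-1,1)$. Since $g$ is even and depends on $z$ only through $f(z):=(1-z^2)(1+3z^2)$, I set $u=z^2\in[0,1)$ and study $\phi(u)=(1-u)(1+3u)=1+2u-3u^2$. Then $\phi'(u)=2-6u$ vanishes only at $u=1/3$, where $\phi$ attains its maximum value $4/3$, while $\phi(0)=1$ and $\phi(u)\to 0$ as $u\to 1$. Translating back through \eqref{geqn}: as $z$ increases across $(-1,1)$, $g$ descends from $+\infty$ to the interior minimum $g(\pm 1/\sqrt{3})=3$ (a short evaluation with $\phi=4/3$), rises to the local maximum $g(0)=8/\sqrt{3}$ at $z=0$, and then mirrors this behavior. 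Hence $g$ has exactly two interior minima, both equal to $3$, located at $z=\pm 1/\sqrt{3}$; a single local maximum equal to $8/\sqrt{3}$ at $z=0$; and vertical asymptotes at $z=\pm 1$. This partitions $(-1,1)$ into four arcs of strict monotonicity, separated by the critical points $z=-1/\sqrt{3},\,0,\,1/\sqrt{3}$.

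Finally I would count the preimages $g^{-1}(c)$ for $c=\omega^2/m$ by intersecting the graph of $g$ with horizontal lines, arc by arc. For $c<3$ there are no solutions; for $c=3$ the two minima give exactly the points $z=\pm 1/\sqrt{3}$; for $3<c<8/\sqrt{3}$ each of the four monotone arcs contributes one solution, giving four; for $c=8/\sqrt{3}$ the level is met once on each outer arc and once at the junction $z=0$ of the two inner arcs, giving three; and for $c>8/\sqrt{3}$ only the two outer arcs rise above $8/\sqrt{3}$, giving two. Each such $z$ yields both a positive and a negative $\omega$ via \eqref{geqn}, producing the claimed equilateral elliptic relative equilibria in the plane $z=\text{constant}$. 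The argument is routine once \eqref{geqn} is in hand; the only point demanding care is the bookkeeping at the two critical levels $c=3$ and $c=8/\sqrt{3}$ — in particular verifying that at $c=8/\sqrt{3}$ the inner arcs meet the level solely at $z=0$, so that the count drops to three rather than four. This boundary accounting, rather than any genuine analytic difficulty, is the main (and modest) obstacle.
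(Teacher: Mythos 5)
Your proof is correct and follows essentially the paper's own route: the paper likewise derives the governing relation $\omega^2/m=8/[\sqrt{3}(1+2z^2-3z^4)^{3/2}]$ (your $(1-z^2)(1+3z^2)=1+2z^2-3z^4$ is the same expression) and then refers the preimage count to the graph of this function, which you simply carry out explicitly via the substitution $u=z^2$. The level-set bookkeeping at $c=3$ and $c=8/\sqrt{3}$ is exactly the "straightforward computation" the paper leaves implicit, and your counts agree with the statement.
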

\begin{proof}
The first part of the statement is a consequence of Theorem \ref{ngonS} for
$n=3$. Alternatively, we can substitute into system (\ref{coordS}) a 
solution of the form (\ref{check}) with $i=1,2,3$, $r:=r_1=r_2=r_3$, $z=\pm(1-r^2)^{1/2}$, $\alpha_1=0,
\alpha_2=2\pi/3, \alpha_3=4\pi/3$, and obtain the equation
\begin{equation}
{8\over\sqrt{3}(1+2z^2-3z^4)^{3/2}}={\omega^2\over m}.\label{eq4}
\end{equation}
The left hand side is positive for $z\in(-1,1)$ and tends to infinity when 
$z\to\pm1$ (see Figure \ref{3gon}). So for any $z$ in this interval and $m>0$, 
there are a positive and a negative $\omega$ for which the above equation is
satisfied. Figure \ref{3gon} and a straightforward computation also clarify the 
second part of the statement.
\end{proof}

\begin{figure} 
\centering 
\includegraphics[width=1.7in]{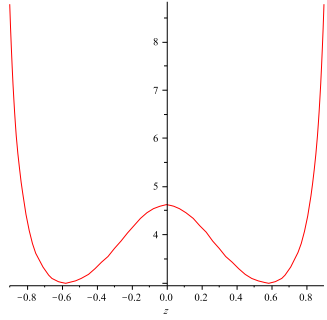}
\caption{\small The graph of the function $f(z)={8\over\sqrt{3}(1+2z^2-3z^4)^{3/2}}$ for
$z\in(-1,1)$.}\label{3gon}
\end{figure}

\begin{remark}
A result similar to Corollary \ref{equilateralS} can be proved for two equal
masses that rotate on a non-geodesic circle, when the bodies are 
situated at opposite ends of a rotating diameter. Then, for $z\in(-1,0)\cup(0,1)$, the 
analogue of (\ref{eq4}) is the equation
$${1\over 4z^2|z|(1-z^2)^{3/2}}={\omega^2\over m}.$$
The case $z=0$ yields no solution because it involves an antipodal singularity.
\end{remark}

We have reached now the point when we can decide whether the equilateral
triangle can be an elliptic relative equilibrium in ${\bf S}^2$ if the masses are not equal. The following result shows that, unlike in the Euclidean case, the answer is negative when the bodies move on the sphere in the same Euclidean plane.


\begin{proposition}
In the 3-body problem in ${\bf S}^2$, if the bodies $m_1, m_2, m_3$
are initially at the vertices of an equilateral triangle in the plane $z=$
constant for some $z\in(-1,1)$, then there are initial velocities that lead
to an elliptic relative equilibrium in which the triangle rotates in its own plane if 
and only if $m_1=m_2=m_3$.\label{equil}
\end{proposition}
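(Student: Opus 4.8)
The plan is to substitute the rotating equilateral ansatz directly into the equations of motion \eqref{coordS} and read off the constraints the masses must satisfy. I write the candidate solution in the form \eqref{check}, namely $x_i=r\cos(\omega t+\alpha_i)$, $y_i=r\sin(\omega t+\alpha_i)$, $z_i=z$ for $i=1,2,3$, with common radius $r=(1-z^2)^{1/2}$ and angular separations $\alpha_1=0$, $\alpha_2=2\pi/3$, $\alpha_3=4\pi/3$ fixing the equilateral shape. The sufficiency (``if'') half is then immediate: when $m_1=m_2=m_3$, Theorem \ref{ngonS} applied to $n=3$ already produces a positive and a negative $\omega$ for which this ansatz solves \eqref{coordS}, so equal masses do generate such a relative equilibrium. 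The whole content therefore lies in the ``only if'' direction, and the first step is to exploit the symmetry of the equilateral triangle.

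The key simplification is that all three pairwise inner products coincide: for $i\neq j$ one has ${\bf q}_i\cdot{\bf q}_j=r^2\cos(2\pi/3)+z^2=:c$ with $c=(3z^2-1)/2$, so a single denominator $D:=(1-c^2)^{3/2}$ appears in every interaction term. Differentiating the ansatz gives $\ddot x_i=-\omega^2x_i$, $\ddot y_i=-\omega^2y_i$, $\ddot z_i=0$, and $\dot x_i^2+\dot y_i^2+\dot z_i^2=r^2\omega^2$. Substituting into \eqref{coordS} and writing $M:=m_1+m_2+m_3$ for the total mass, the $x$-equation for body $i$ collapses to
$$
-\omega^2 z^2 x_i=\frac{1}{D}\Big[\sum_{j\ne i}m_j x_j-c\,(M-m_i)x_i\Big],
$$
the $y$-equation takes the identical form with $x$ replaced by $y$, and the $z$-equation reads
$$
0=\frac{z\,(1-c)(M-m_i)}{D}-r^2\omega^2 z.
$$
I would record here that $1-c=\tfrac{3}{2}r^2>0$ and $0<D<\infty$ for every admissible $z\in(-1,1)$, so neither factor degenerates.

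Now I split on whether the triangle lies on a great circle. If $z\neq0$, dividing the $z$-equation by $z$ yields $(1-c)(M-m_i)/D=r^2\omega^2$ for each $i=1,2,3$. The right-hand side is independent of $i$ and the coefficient $(1-c)/D$ is nonzero, so $M-m_1=M-m_2=M-m_3$, forcing $m_1=m_2=m_3$. This disposes of every height except the equator, and it is the clean part of the argument.

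The remaining, genuinely delicate, case is $z=0$, where the $z$-equation becomes the trivial identity $0=0$ and carries no information; this degeneracy is the main obstacle, since the mechanism used for $z\neq0$ collapses precisely on the great circle. Here $r=1$, so the left-hand side $-\omega^2z^2x_i$ of the $x$- and $y$-equations also vanishes, and those equations reduce to the requirement that the tangential gravitational force $\sum_{j\ne i}m_j({\bf q}_j-c\,{\bf q}_i)$ be zero for each $i$, independently of $\omega$. Evaluating this vector at $t=0$ with ${\bf q}_1=(1,0,0)$, ${\bf q}_2=(-\tfrac12,\tfrac{\sqrt3}{2},0)$, ${\bf q}_3=(-\tfrac12,-\tfrac{\sqrt3}{2},0)$ and $c=-\tfrac12$ gives, for body $1$, the vector $\big(0,\tfrac{\sqrt3}{2}(m_2-m_3),0\big)$, which vanishes iff $m_2=m_3$; the cyclically symmetric computations for bodies $2$ and $3$ yield $m_1=m_3$ and $m_1=m_2$. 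Together these again force $m_1=m_2=m_3$, completing the ``only if'' direction and hence the proposition.
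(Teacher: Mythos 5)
Your proof is correct and takes essentially the same route as the paper: substitute the rotating equilateral ansatz \eqref{check} into equations \eqref{coordS} and read off linear constraints that force $m_1=m_2=m_3$. The paper compresses the computation into the single system $m_1+m_2=m_2+m_3=m_3+m_1=\gamma\omega^2$ with $\gamma=\sqrt{3}(1+2z^2-3z^4)^{3/2}/4$, valid for all $z\in(-1,1)$, whereas you split off the degenerate great-circle case $z=0$ (where the $z$-equation and the $\cos$-coefficients carry no information) and settle it by the vanishing of the tangential force at $t=0$; both yield the same conclusion, and your explicit handling of $z=0$ is if anything slightly more careful than the paper's uniform system.
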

\begin{proof}
The implication which shows that if $m_1=m_2=m_3$, the rotating 
equilateral triangle is a relative equilibrium, follows from Theorem 
\ref{equilateralS}. To prove the other implication, we substitute into 
equations (\ref{coordS}) a solution of
the form (\ref{check}) with $i=1,2,3,\ r:=r_1,r_2,r_3,\ z:=z_1=z_2=z_3=\pm
(1-r^2)^{1/2}$, and $\alpha_1=0, \alpha_2=2\pi/3, \alpha_3=4\pi/3$. The
computations then lead to the system
\begin{equation}
\begin{cases}
m_1+m_2=\gamma\omega^2\cr
m_2+m_3=\gamma\omega^2\cr
m_3+m_1=\gamma\omega^2,\cr
\end{cases}
\end{equation}
where $\gamma=\sqrt{3}(1+2z^2-3z^4)^{3/2}/4$. But for any $z=$ constant
in the interval $(-1,1)$, the above system has a solution only for 
$m_1=m_2=m_3=\gamma\omega^2/2$. Therefore the masses must be equal.
\end{proof}

The next result leads to the conclusion that Lagrangian solutions in ${\bf S}^2$ can take place only in Euclidean planes of ${\bf R}^3$. This property is known to be true in the Euclidean case for all elliptic relative equilibria, \cite{Win}, but Wintner's proof doesn't work in our case because it uses the integral of the center of mass. Most
importantly, our result also implies that Lagrangian orbits with non-equal masses cannot exist in ${\bf S}^2$.


\begin{theorem}
For all Lagrangian solutions in ${\bf S}^2$, the masses $m_1, m_2$ and $m_3$ have to rotate on the same circle, whose plane must be orthogonal to the rotation axis, and
therefore $m_1=m_2=m_3$.\label{lagranS}
\end{theorem}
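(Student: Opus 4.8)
The plan is to substitute the elliptic relative equilibrium ansatz of Definition~\ref{reS} into the equations of motion \eqref{eqS} and collapse the whole system into one vector identity per body. Writing ${\bf k}=(0,0,1)$ for the rotation axis and $c:={\bf q}_i\cdot{\bf q}_j$ for the common cosine of the mutual distances (common because the triangle is equilateral), the gradient \eqref{grad} reduces to $\nabla_{{\bf q}_i}U_1=\lambda m_i[{\bf S}-B_i{\bf q}_i]$, where $\lambda:=(1-c^2)^{-3/2}$, ${\bf S}:=\sum_j m_j{\bf q}_j$, $M:=m_1+m_2+m_3$, and $B_i:=m_i(1-c)+cM$. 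Since the ansatz gives $\ddot{\bf q}_i=-\omega^2({\bf q}_i-z_i{\bf k})$ and $\dot{\bf q}_i\cdot\dot{\bf q}_i=\omega^2(1-z_i^2)$, dividing \eqref{eqS} by $m_i$ and collecting terms yields
\begin{equation}
\lambda{\bf S}=A_i{\bf q}_i+\omega^2z_i{\bf k},\qquad A_i:=\lambda B_i-\omega^2z_i^2,\qquad i=1,2,3.\label{starstar}
\end{equation}
The horizontal part of \eqref{starstar} is the force balance and the vertical part the height condition; the gain from keeping them together is that the right-hand side must equal the \emph{same} vector $\lambda{\bf S}$ for every $i$.

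The key geometric input is that a genuine Lagrangian (non-collinear) configuration is not contained in a geodesic, so ${\bf q}_1,{\bf q}_2,{\bf q}_3$ are linearly independent. Indeed their Gram determinant equals $(1-c)^2(1+2c)$, which vanishes exactly when $c=-1/2$, i.e.\ precisely when the equilateral triangle degenerates onto a great circle. I would record at the outset that $c\ne-1/2$ and that $\{{\bf q}_1,{\bf q}_2,{\bf q}_3\}$ is a basis of ${\bf R}^3$. Expanding ${\bf k}=\beta_1{\bf q}_1+\beta_2{\bf q}_2+\beta_3{\bf q}_3$ in this basis, substituting into \eqref{starstar}, and matching coefficients of ${\bf q}_\ell$ gives, for every $\ell\ne i$, the relation $\lambda m_\ell=\omega^2z_i\beta_\ell$. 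Because $m_\ell>0$ and $\lambda,\omega^2>0$, each $\beta_\ell\ne0$, and then for fixed $\ell$ this relation forces the two heights $z_i$ with $i\ne\ell$ to coincide. Taking $\ell=1$ and $\ell=2$ yields $z_2=z_3$ and $z_1=z_3$, hence $z_1=z_2=z_3$.

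Once the three heights agree, the bodies lie on the same circle $\{z=\text{const}\}\cap{\bf S}^2$, whose plane is orthogonal to the rotation axis; this establishes the first two assertions. The configuration is then an equilateral relative equilibrium rotating in its own horizontal plane, so Proposition~\ref{equil} applies verbatim and gives $m_1=m_2=m_3$, finishing the argument.

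The step I expect to be delicate is separating the genuine Lagrangian case from the degenerate one, since the conclusion is actually false for $c=-1/2$: three equally spaced masses on a meridian form an equilateral relative equilibrium with distinct heights and unequal masses---for instance $m_1=m_2\ne m_3$ with bodies at polar angles $30^\circ,150^\circ,270^\circ$ and $\omega^2=2\lambda(m_3-m_1)$, which one verifies satisfies \eqref{starstar} directly. Such configurations lie on a geodesic and are therefore Eulerian rather than Lagrangian, and the equal-spacing-on-a-great-circle endpoints are exactly the objects excluded by Theorem~\ref{rengon} in the equal-mass case. The hypothesis that the solution is Lagrangian (non-collinear) is precisely what is needed to rule them out, so I would state the linear independence of $\{{\bf q}_1,{\bf q}_2,{\bf q}_3\}$ explicitly rather than leave it tacit.
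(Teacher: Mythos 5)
Your argument is correct, but it follows a genuinely different route from the paper's. The paper gets $z_1=z_2=z_3$ by combining the angular-momentum integrals (the vanishing of the horizontal components of $\sum_i m_i{\bf q}_i\times\dot{\bf q}_i$ yields relation \eqref{withz}) with one component of the equations of motion (which yields the parallel relation \eqref{withoutz} without the $z_i$'s), and then dividing the two. You instead stay entirely inside the equations of motion \eqref{eqS}: the equilateral hypothesis collapses the gradient \eqref{grad} to a combination of $\sum_j m_j{\bf q}_j$ and ${\bf q}_i$, each body's equation becomes a single vector identity with the same left-hand side, and linear independence of ${\bf q}_1,{\bf q}_2,{\bf q}_3$ lets you match coefficients to force the heights to agree. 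Each approach has a virtue. The paper's is computationally lighter, but its division step silently requires $\sin a\ne 0$ and $z_2,z_3\ne0$, i.e.\ it tacitly excludes the degenerate configurations; your Gram-determinant computation $(1-c)^2(1+2c)$ makes the needed non-degeneracy hypothesis ($c\ne -1/2$, the configuration not lying on a great circle) explicit, and your rotating great-circle example with $m_1=m_2\ne m_3$ and $\omega^2=2\lambda(m_3-m_1)$ correctly shows that some exclusion of that case is genuinely necessary, not just a convenience of the proof (those configurations are collinear, hence Eulerian rather than Lagrangian, so the theorem as stated is safe). One small point worth stating explicitly in your write-up: you use $\omega\ne0$ (as does the paper) to divide by $\omega^2$, which is legitimate since a Lagrangian solution is a rotating relative equilibrium; and your coefficient matching also delivers $z_i\ne0$ for free, which is consistent since the common-height great circle $z=0$ would force $c=-1/2$. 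The final reduction to Proposition \ref{equil} is the same in both proofs.
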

\begin{proof}
Consider a Lagrangian solution in ${\bf S}^2$ with bodies of masses $m_1, m_2$, and $m_3$. This means that the solution, which is an elliptic relative equilibrium, must
have the form
\begin{align*}
x_1&=r_1\cos\omega t,& y_1&=r_1\sin\omega t,& z_1&=(1-r_1^2)^{1/2},\\
x_2&=r_2\cos(\omega t+a),& y_2&=r_2\sin(\omega t+a),& z_2&=(1-r_2^2)^{1/2},\\
x_3&=r_3\cos(\omega t+b),& y_3&=r_3\sin(\omega t+b),& z_3&=(1-r_3^2)^{1/2},
\end{align*}
with $b>a>0$. In other words, we assume that this equilateral forms a constant angle with the rotation axis, $z$, such that each body describes its own circle on ${\bf S}^2$.
But for such a solution to exist, it is necessary that the total angular momentum is
either zero or is given by a vector parallel with the $z$ axis. Otherwise this vector rotates around the $z$ axis, in violation of the angular-momentum integrals. This means that at least the first two components of the vector $\sum_{i=1}^3m_i{\bf q}_i\times\dot{\bf q}_i$ must be zero. A straightforward computation shows this constraint to lead to the condition 
$$
m_1r_1z_1\sin\omega t+m_2r_2z_2\sin(\omega t+a)+m_3r_3z_3\sin(\omega t+b)=0,
$$
assuming that $\omega\ne 0$. For $t=0$, this equation becomes
\begin{equation}
m_2r_2z_2\sin a=-m_3r_3z_3\sin b.
\label{withz}
\end{equation}
Using now the fact that
$$
\alpha:=x_1x_2+y_1y_2+z_1z_2=x_1x_3+y_1y_3+z_1z_3=x_3x_2+y_3y_2+z_3z_2
$$
is constant because the triangle is equilateral, the equation of the system 
of motion corresponding to $\ddot{y}_1$ takes the form
$$
Kr_1(r_1^2-1)\omega^2\sin\omega t=
m_2r_2\sin(\omega t+a)+m_3r_3\sin(\omega t+b),
$$
where $K$ is a nonzero constant. For $t=0$, this equation becomes
\begin{equation}
m_2r_2\sin a=-m_3r_3\sin b.
\label{withoutz}
\end{equation}
Dividing \eqref{withz} by \eqref{withoutz}, we obtain that $z_2=z_3$. Similarly, 
we can show that $z_1=z_2=z_3$, therefore the motion
must take place in the same Euclidian plane on a circle orthogonal to the
rotation axis. Proposition \ref{equil} then implies that $m_1=m_2=m_3$.
\end{proof}


\subsection{Eulerian solutions} 
It is now natural to ask whether such elliptic relative equilibria exist, since---as  Theorem \ref{rengon} shows---they cannot be generated from regular $n$-gons. The answer in the case $n=3$ of equal masses is given by the following result.


\begin{theorem}
Consider the 3-body problem in ${\bf S}^2$ with equal masses, $m:=m_1=m_2=m_3$.
Fix the body of mass $m_1$ at $(0,0,1)$ and the bodies of masses $m_2$ and $m_3$
at the opposite ends of a diameter on the circle $z=$ constant. Then, for any $m>0$ and $z\in(-0.5,0)\cup(0,1)$, there are a positive and a negative $\omega$ that produce 
elliptic relative equilibria.\label{regeo3}
\end{theorem}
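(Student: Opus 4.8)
The plan is to exhibit an explicit candidate solution and verify directly that it satisfies equations \eqref{coordS}, after which the whole problem reduces to the solvability of a single scalar equation in $\omega^2$. First I would place the bodies at
$$
{\bf q}_1=(0,0,1),\quad {\bf q}_2=(r\cos\omega t,\,r\sin\omega t,\,z),\quad {\bf q}_3=(-r\cos\omega t,\,-r\sin\omega t,\,z),
$$
where $r=(1-z^2)^{1/2}$, so that $m_1$ sits on the rotation ($z$) axis while $m_2$ and $m_3$ occupy opposite ends of a rotating diameter of the circle $z=$ constant. Since ${\bf q}_1$ lies on the axis it has zero velocity, so for $m_1$ only the gravitational term of \eqref{coordS} survives; using ${\bf q}_1\cdot{\bf q}_2={\bf q}_1\cdot{\bf q}_3=z$, the two interaction vectors ${\bf q}_2-z{\bf q}_1$ and ${\bf q}_3-z{\bf q}_1$ are negatives of each other, whence $\nabla_{{\bf q}_1}U_1={\bf 0}$ and $m_1$ indeed remains fixed for all time.

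Next, by the symmetry under rotation by $\pi$ about the $z$-axis, which interchanges $m_2$ and $m_3$, it suffices to check the three scalar equations for $m_2$. The two inner products entering the gradient are ${\bf q}_2\cdot{\bf q}_1=z$ and ${\bf q}_2\cdot{\bf q}_3=2z^2-1$, so that $1-({\bf q}_2\cdot{\bf q}_3)^2=4z^2(1-z^2)$ and the $m_2$--$m_3$ denominator becomes $8|z|^3(1-z^2)^{3/2}$; this is precisely where the absolute value, and hence the asymmetry between $z>0$ and $z<0$, enters. Substituting ${\bf q}_2$, $\dot{\bf q}_2$, and $\ddot{\bf q}_2$ (note $\dot{\bf q}_2\cdot\dot{\bf q}_2=r^2\omega^2$) into \eqref{coordS}, I expect the $x$- and $y$-equations to be proportional to $\cos\omega t$ and $\sin\omega t$ and the $z$-equation to be time-independent, with all three collapsing to the single relation
$$
\frac{\omega^2}{m}=\frac{1}{(1-z^2)^{3/2}}\left(\frac{1}{z}+\frac{1}{4|z|^3}\right).
$$

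The final step is the sign analysis of the right-hand side, since a genuine relative equilibrium requires $\omega^2>0$. For $z\in(0,1)$ both terms are positive, so a solution always exists. For $z<0$ the bracket equals $(1-4z^2)/(4|z|^3)$, which is positive exactly when $|z|<1/2$; thus solutions persist on $(-1/2,0)$ and vanish at $z=-1/2$, where the configuration is the vertical equilateral triangle and the relation forces $\omega=0$ — a fixed point, consistent with Theorem \ref{rengon} — while for $z<-1/2$ the right-hand side is negative and no real $\omega$ exists. The value $z=0$ is excluded because ${\bf q}_2\cdot{\bf q}_3=-1$ there, an antipodal singularity (for every other admissible $z$ one has ${\bf q}_1\cdot{\bf q}_2=z\in(-1,1)$ and ${\bf q}_2\cdot{\bf q}_3=2z^2-1\in(-1,1)$, so the configuration is nonsingular). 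For each such $z$ and each $m>0$ the equation then determines $\omega^2$ uniquely, giving one positive and one negative $\omega$.

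The routine but error-prone core is the gradient computation for $m_2$ and the check that the $x$-, $y$-, and $z$-equations are mutually consistent; the genuinely delicate point, which carries all the geometric content, is the careful handling of $|z|$ in the $m_2$--$m_3$ interaction, as this absolute value is exactly what breaks the up--down symmetry of the sphere and produces the threshold $z=-1/2$ at the equilateral configuration.
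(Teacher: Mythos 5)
Your proposal is correct and follows essentially the same route as the paper: substitute the explicit rotating ansatz into equations \eqref{coordS}, observe that everything collapses to the single scalar relation \eqref{ratio1} (your bracketed form $\frac{1}{z}+\frac{1}{4|z|^3}$ times $(1-z^2)^{-3/2}$ is algebraically identical to the paper's $\frac{4z+|z|^{-1}}{4z^2(1-z^2)^{3/2}}$), and then read off the sign of the right-hand side to get the interval $(-0.5,0)\cup(0,1)$, with $z=-0.5$ recovering the equilateral fixed point. Your write-up just makes explicit the symmetry and $|z|$ bookkeeping that the paper leaves as "a straightforward computation."
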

\begin{proof}
Substituting into the equations of motion (\ref{coordS}) a solution of the form  
$$x_1=0,\ y_1=0,\ z_1=1,$$
$$x_2=r\cos\omega t,\ y_2=r\sin\omega t, z_2=z,$$
$$x_3=r\cos(\omega t+\pi),\ y_3=r\sin(\omega t+\pi), z_3=z,$$
with $r\ge0$ and $z$ constants satisfying $r^2+z^2=1$, leads either to identities 
or to the algebraic equation
\begin{equation}
{4z+|z|^{-1}\over 4z^2(1-z^2)^{3/2}}={\omega^2\over m}.\label{ratio1}
\end{equation}
The function on the left hand side is negative for $z\in(-1,-0.5)$,
$0$ at $z=-0.5$, positive for $z\in(-0.5,0)\cup(0,1)$, and undefined at $z=0$.
Therefore, for every $m>0$ and $z\in(-0.5,0)\cup(0,1)$, there are a positive and 
a negative $\omega$ that lead to a geodesic relative equilibrium. For $z=-0.5$, we
recover the equilateral fixed point. The sign of $\omega$ determines the sense of 
rotation.
\end{proof}
\begin{figure} 
\centering 
\includegraphics[width=1.7in]{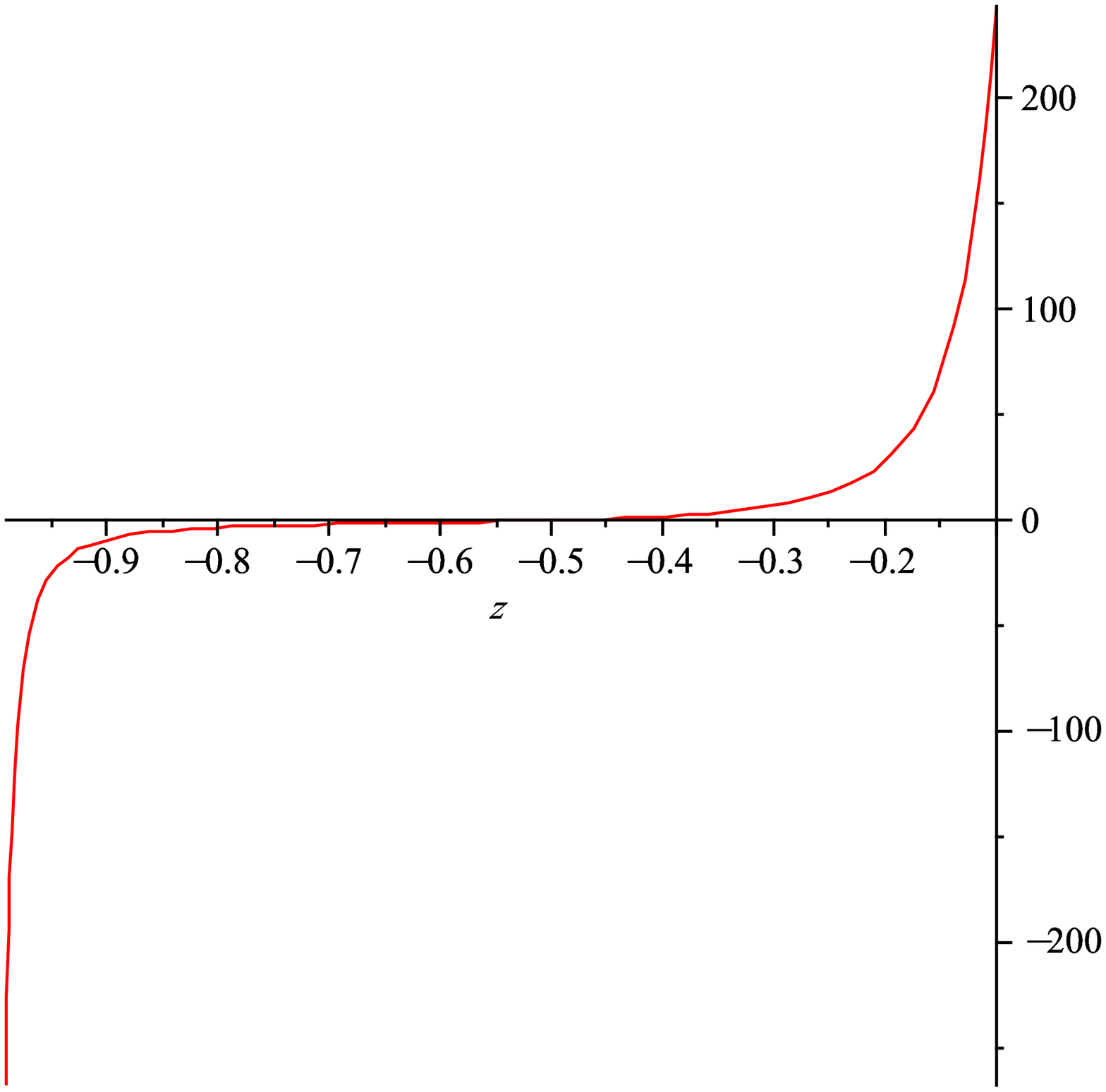}
\includegraphics[width=1.7in]{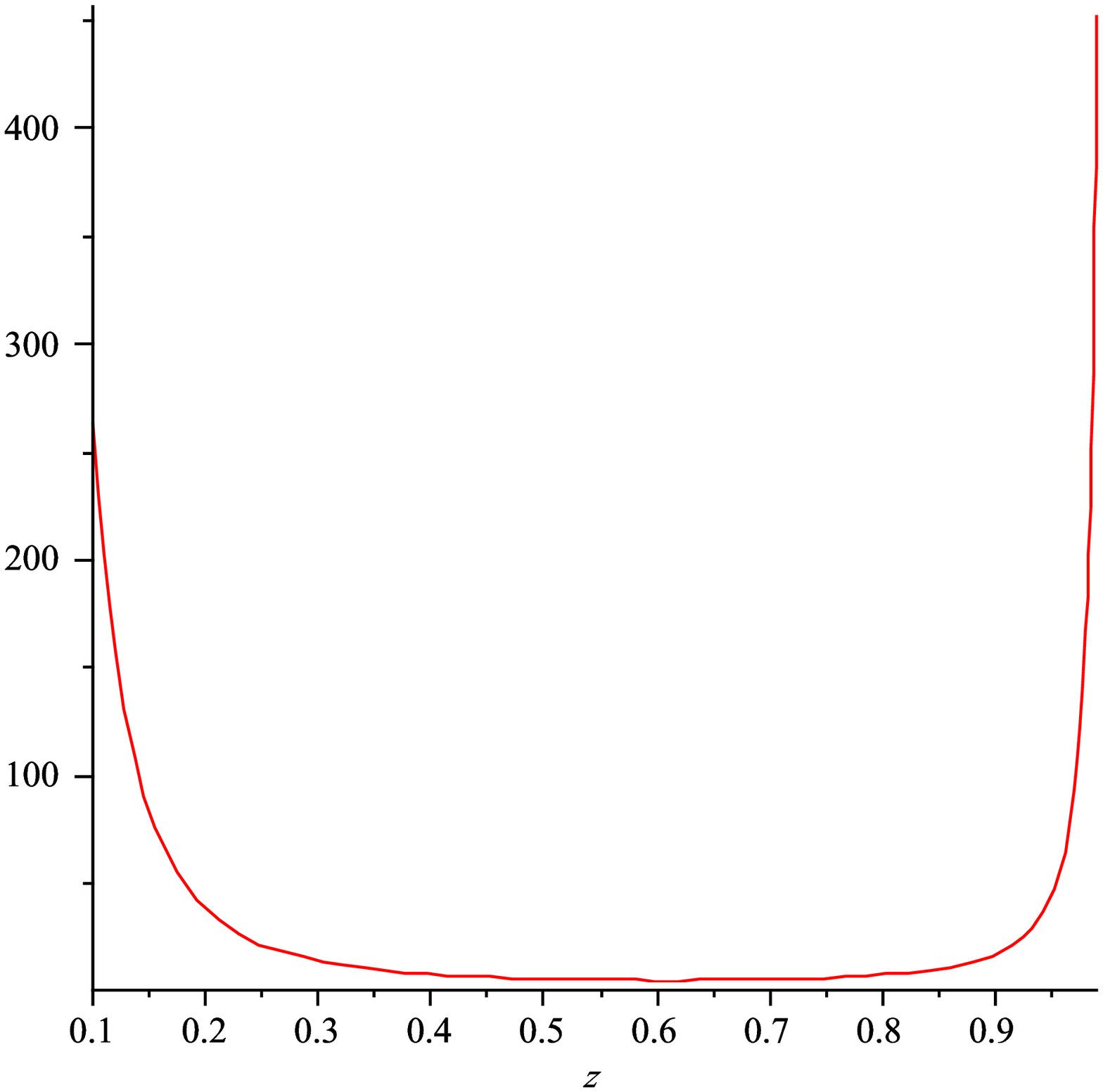}
\caption{\small The graph of the function $f(z)={4z+|z|^{-1}\over 4z^2(1-z^2)^{3/2}}$ in
the intervals $(-1,0)$ and $(0,1)$, respectively.}\label{regeoS2}
\end{figure}
\begin{remark} For every $\omega^2/m\in(0,64\sqrt{15}/45)$, there are three values of 
$z$ that satisfy relation (\ref{ratio1}): one in the interval $(-0.5,0)$ and two 
in the interval $(0,1)$ (see Figure \ref{regeoS2}).
\end{remark}
\begin{remark}
If in Theorem \ref{regeo3} we take the masses $m_1=:m$ and $m_2=m_3=:M$,
the analogue of equation \eqref{ratio1} is
\begin{equation*}
{4mz+M|z|^{-1}\over 4z^2(1-z^2)^{3/2}}={\omega^2\over m}.\label{ratio2}
\end{equation*}
Then solutions exist for any $z\in(-\sqrt{M/m}/2,0)\cup(0,1)$. This means
that there are no fixed points for $M\ge 4m$ (a fact that agrees with what
we learned from Remark \ref{rem} and the proof of Theorem \ref{singularity}), so relative equilibria
exist for such masses for all $z\in(-1,0)\cup(0,1)$.
\end{remark}


\section{Relative equilibria in ${\bf H}^2$}

In this section we will prove  a few results about fixed points, as well as elliptic
and hyperbolic relative equilibria in ${\bf H}^2$. We also show that parabolic relative equilibria do not exist. Since, by the Principal Axis theorem for the Lorentz group, every Lorentzian rotation (see Appendix) can be written, in some basis, either as an elliptic rotation about the $z$ axis, or as an hyperbolic rotation about the $x$ axis, or as a parabolic rotation about the line $x=0$, $y=z$, we can define three kinds of relative equilibria: the elliptic relative equilibria, the hyperbolic relative equilibria, and the parabolic relative equilibria. This terminology matches the standard terminology of hyperbolic geometry \cite{Henle}.

\medskip

The  elliptic relative equilibria  are defined as follows.

\begin{definition}
An elliptic relative equilibrium in ${\bf H}^2$ is 
a solution ${\bf q}_i=(x_i,y_i,z_i)$, $i=1,\dots,n,$ of equations (\ref{coordH}) 
with $x_i=\rho_i\cos(\omega t+\alpha_i), y_i=\rho_i\sin(\omega t+\alpha_i)$, 
and $z_i=(\rho_i^2+1)^{1/2}$,  where $\omega, \alpha_i,$ and 
$\rho_i,\ i=1,\dots,n,$ are constants.
\label{reH1}
\end{definition}

Remark that, as in ${\bf S}^2$, a ``weak'' property of the center of
mass occurs in ${\bf H}^2$ for elliptic relative equilibria. Indeed, if all the bodies 
are at all times on one side of a plane containing the rotation axis, then the integrals of the angular momentum are violated because the vector representing the total angular momentum cannot be zero or parallel to the $z$ axis.

\medskip

Let us now define the hyperbolic relative equilibria.  
\begin{definition}
A hyperbolic relative equilibrium in ${\bf H}^2$ is 
a solution of equations (\ref{coordH}) of the form ${\bf q}_i=(x_i,y_i,z_i),\ i=1,\dots,n,$ defined for all $t\in{\bf R}$, with 
\begin{equation}
x_i={\rm constant},\ \ y_i=\rho_i\sinh(\omega t+\alpha_i),\ \ {\rm and} \ \ 
z_i=\rho_i\cosh(\omega t+\alpha_i),\label{hyper}
\end{equation}
where $\omega, \alpha_i,$ and $\rho_i=(1+x_i^2)^{1/2}\ge1,\ i=1,\dots,n,$ are constants.
\label{reH}
\end{definition}

Finally, the parabolic relative equilibria are defined as follows.
\begin{definition}
A parabolic relative equilibrium in ${\bf H}^2$ is 
a solution of equations (\ref{coordH}) of the form ${\bf q}_i=(x_i,y_i,z_i),\ i=1,\dots,n,$ defined for all $t\in{\bf R}$, with 
\begin{equation}
\begin{split}
x_i&=a_i-b_it+c_it\\
y_i&=a_it+b_i(1-t^2/2)+c_it^2/2\\
z_i&=a_it-b_it^2/2+c_i(1+t^2/2),\label{parabol}
\end{split}
\end{equation}
where $a_i,b_i$ and $c_i,\ i=1,\dots,n,$ are constants, and $a_i^2+b_i^2-c_i^2=-1$.
\label{reH}
\end{definition}


\subsection{Fixed Points in $\bf H^2$}
The simplest solutions of the equations of motion are the fixed points. They
can be seen as trivial elliptic relative equilibria that correspond to $\omega=0$. In
terms of the equations of motion, we can define them as follows.

\begin{definition}
A solution of system (\ref{HamH}) is called a fixed point if 
$$\overline\nabla_{{\bf q}_i}U_{-1}({\bf q})(t)={\bf p}_i(t)={\bf 0}\ \ {\rm for} \ {\rm all}\ \
t\in{\bf R}\ \ {\rm and}\ \ i=1,\dots,n.$$
\end{definition}

Unlike in ${\bf S}^2$, there are no fixed points in ${\bf H}^2$. Let us formally
state and prove this fact.


\begin{theorem}
In the $n$-body problem with $n\ge 2$ in ${\bf H}^2$ there are no configurations that 
correspond to fixed points of the equations of motion.\label{nofixH}
\end{theorem}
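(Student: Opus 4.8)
The strategy is to reduce the nonexistence of fixed points to a statement about a single configuration and then exploit the fact that ${\bf H}^2$, modeled on the upper sheet of the hyperboloid, has no antipodal pairs. A fixed point is in particular a zero-velocity configuration at which $\overline\nabla_{{\bf q}_i}U_{-1}({\bf q})={\bf 0}$ for every $i$; hence it suffices to prove that for \emph{every} nonsingular configuration of $n\ge 2$ bodies at least one of these gradients is nonzero. I would argue exactly in the spirit of the proof of Theorem \ref{nofixS}, but instead of the lowest body inside a hemisphere I would single out the body $m_1$ whose $z$-coordinate is largest, and show that its $z$-acceleration (computed with all velocities set to zero) is strictly negative, so that the topmost body is always pulled downward and cannot be in equilibrium.

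The computation rests on one inequality. As noted in Section 4, on the hyperboloid $x^2+y^2-z^2=-1$, $z>0$, one has ${\bf q}_i\boxdot{\bf q}_j\le -1$ for all $i,j$, with equality only when the two bodies coincide; for a nonsingular configuration the inequality is therefore strict. Setting $k_{1j}:={\bf q}_1\boxdot{\bf q}_j<-1$ and putting all velocities to zero in the $z$-equation of \eqref{coordH} gives
\begin{equation*}
\ddot z_1=\sum_{j\ne 1}\frac{m_j\bigl[z_j+k_{1j}z_1\bigr]}{\bigl(k_{1j}^2-1\bigr)^{3/2}}.
\end{equation*}
Every denominator is positive because the bodies are distinct. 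For each numerator, $k_{1j}<-1$ and $z_1>0$ give $k_{1j}z_1<-z_1$, hence $z_j+k_{1j}z_1<z_j-z_1\le 0$, the last inequality holding because $z_1$ is maximal. Thus each summand is strictly negative and $\ddot z_1<0$.

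Finally, at zero velocity the equations of motion \eqref{eqH} read $m_1\ddot{\bf q}_1=\overline\nabla_{{\bf q}_1}U_{-1}({\bf q})$, so $\ddot z_1<0$ forces $\overline\nabla_{{\bf q}_1}U_{-1}({\bf q})\ne{\bf 0}$. Hence no nonsingular configuration can make all the gradients vanish, and there are no fixed points. I expect the only delicate point to be conceptual rather than computational: for a body in an arbitrary position the sign of the numerator $z_j+k_{1j}z_1$ is genuinely indeterminate, so the whole argument hinges on selecting the extremal (topmost) body, for which the uniform bound ${\bf q}_1\boxdot{\bf q}_j\le -1$ can be combined with $z_1\ge z_j$. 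This is precisely the step where the absence of antipodal configurations in ${\bf H}^2$---in contrast with ${\bf S}^2$, where symmetric polygons \emph{do} balance---makes the difference.
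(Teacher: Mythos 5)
Your proof is correct and follows essentially the same route as the paper's: both single out a body with maximal $z$-coordinate and show that, with all velocities set to zero, its $z$-acceleration is strictly negative, so the configuration cannot be a fixed point. The only difference is that where the paper argues geometrically (the attraction acts downward along the concave-up geodesic hyperbolas joining the topmost body to the others), you verify the sign directly from \eqref{coordH} using ${\bf q}_1\boxdot{\bf q}_j<-1$ and $z_j\le z_1$, which makes that step fully explicit.
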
 
\begin{proof}
Consider any collisionless configuration of $n$ bodies initially at rest in 
${\bf H}^2$. This means that the component of the forces acting on bodies
due to the constraints, which involve the factors $\dot{x}_i^2+\dot{y}_i^2-\dot{z}_i^2,
\ i=1,\dots,n$, are zero at $t=0$. At least one body, $m_i$, has 
the largest $z$ coordinate.  
Notice that the interaction between $m_i$ and any other body takes place 
along geodesics, which are concave-up hyperbolas on the ($z>0$)-sheet of
the hyperboloid modeling
${\bf H}^2$. Then the body $m_j,  j\ne i$, exercises an attraction
on $m_i$ down the geodesic hyperbola that connects these bodies, so the $z$ coordinate of this force acting on $m_i$ 
is negative, independently of whether $z_j(0)<z_i(0)$ or $z_j(0)=z_i(0)$. 
Since this is true for every $j=1,\dots,n,\ j\ne i$, it follows that $\ddot{z}_i(0)<0$. 
Therefore $m_i$ moves downwards the hyperboloid, so the original configuration
is not a fixed point.
\end{proof}

\subsection{Elliptic Relative Equilibria in $\bf H^2$}


We now consider elliptic relative equilibria, and prove an analogue of Theorem \ref{ngonS}.

\begin{theorem}
Consider the $n$-body problem with equal masses in ${\bf H}^2$. Then, 
for any $m>0$ and $z>1$, there are a positive and a negative
$\omega$ that produce elliptic relative equilibria in which the bodies are at the
vertices of an $n$-gon rotating in the plane $z=$ constant.  \label{ngonH}
\end{theorem}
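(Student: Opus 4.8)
The plan is to imitate the proof of the spherical analogue, Theorem~\ref{ngonS}, transposing every step through the duality $\sin\leftrightarrow\sinh$, $\cos\leftrightarrow\cosh$, $\cdot\leftrightarrow\boxdot$. First I would insert an elliptic relative equilibrium of the shape prescribed by Definition~\ref{reH1} into the hyperbolic equations of motion \eqref{coordH}, taking all masses equal to $m$, all $\rho_i=\rho$ with the common height $z=(\rho^2+1)^{1/2}$ (so that $\rho^2=z^2-1$), and the phases $\alpha_i$ equally spaced around the circle $z=\,$constant so that the bodies sit at the vertices of a regular $n$-gon. Differentiating this ansatz yields $\dot x_i^2+\dot y_i^2-\dot z_i^2=\rho^2\omega^2$ and $\ddot z_i=0$, while the Lorentz product of two bodies is $\q_i\boxdot\q_j=(z^2-1)\cos(\alpha_i-\alpha_j)-z^2$. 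Because the masses are equal and the configuration is symmetric, it suffices to examine the equations for a single body, which I take to be the one with $\alpha_0=0$.

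Writing $k_{0j}:=\q_0\boxdot\q_j=(z^2-1)\cos\alpha_j-z^2$, the two algebraic identities that drive the computation are $1+k_{0j}=(z^2-1)(\cos\alpha_j-1)$ and $\cos\alpha_j+k_{0j}=z^2(\cos\alpha_j-1)$. Substituting into the $z_0$ equation of \eqref{coordH} and using $\ddot z_0=0$, the gravitational terms carry the factor $1+k_{0j}$ while the constraint term contributes $\rho^2\omega^2 z$; dividing through by $z$ and then by $\rho^2=z^2-1$ collapses everything to
\begin{equation}
\frac{\omega^2}{m}=\sum_{j\ne 0}\frac{1-\cos\alpha_j}{[k_{0j}^2-1]^{3/2}}.\label{reduced}
\end{equation}
Next I would carry out the analogous substitution in the $x_0$ equation: expanding $\cos(\omega t+\alpha_j)$ and pairing the index $j$ with $-j$, the $\sin\alpha_j$ contributions cancel (so the $\sin\omega t$ terms drop out), the gravitational coefficient reduces via $\cos\alpha_j+k_{0j}=z^2(\cos\alpha_j-1)$, and the identity $1+\rho^2=z^2$ makes the $\cos\omega t$ balance collapse to exactly \eqref{reduced}. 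The $y_0$ equation gives the same relation by the $\sin/\cos$ duality, so the single scalar condition \eqref{reduced} governs the whole motion.

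It then remains to observe that the right-hand side of \eqref{reduced} is finite and strictly positive. Finiteness and the reality of each denominator follow from the fact, established in the discussion of singularities, that ${\bf H}^2$ carries no antipodal singularities: since $-\q_i\boxdot\q_j\ge 1$ on the hyperboloid, we have $k_{0j}\le -1$, with strict inequality $k_{0j}<-1$ whenever $j\ne 0$ (because $\cos\alpha_j<1$), so $k_{0j}^2-1>0$. Positivity is immediate because every numerator $1-\cos\alpha_j$ is positive for $j\ne 0$. Consequently, for each fixed $m>0$ and $z>1$ the right side of \eqref{reduced} is a fixed positive number, and there are exactly one positive and one negative $\omega$ solving it, which is the assertion. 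I expect the only delicate point to be the bookkeeping that verifies the $z_0$ and $x_0$ equations truly collapse to the \emph{same} relation \eqref{reduced}---in particular the cancellation $1+\rho^2=z^2$ between the constraint force and the centripetal term---but this is routine given the parallel spherical calculation. Note that, in contrast to Theorem~\ref{ngonS}, no separate treatment of even and odd $n$ is required, precisely because ${\bf H}^2$ admits no antipodal configurations, so the $n$-gon is nonsingular for every $n$ and every $z>1$.
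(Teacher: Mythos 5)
Your proposal is correct and follows essentially the same route as the paper, which proves Theorem~\ref{ngonH} by repeating the computation of Theorem~\ref{ngonS} with $r\mapsto\rho$, $r^2+z^2=1\mapsto z^2=\rho^2+1$, and $(1-k_{0j}^2)^{3/2}\mapsto(c_{0j}^2-1)^{3/2}$; your reduced equation and the positivity argument via $k_{0j}<-1$ match the paper's conclusion that, unlike in ${\bf S}^2$, the even case causes no trouble because ${\bf H}^2$ has no antipodal singularities.
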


\begin{proof}
The proof works in the same way as for Theorem \ref{ngonS}, by considering
the cases $n$ odd and even separately. The only differences are that we
replace $r$ with $\rho$, the relation $r^2+z^2=1$ with $z^2=\rho^2+1$, 
and the denominator $(1-k_{0j}^2)^{3/2}$ with $(c_{0j}^2-1)^{3/2}$, wherever
it appears, where $c_{0j}=-k_{0j}$ replaces $k_{0j}$. Unlike in 
${\bf S}^2$, the case $n$ even is satisfied for all admissible values of $z$.
\end{proof}

Like in ${\bf S}^2$, the equilateral triangle presents particular interest,
so let us say a bit more about it than in the general case of the regular $n$-gon.


\begin{cor}
Consider the 3-body  with equal masses, $m:=m_1=m_2=m_3$, in ${\bf H}^2$.
Then for any $m>0$ and $z>1$, there are a positive and a negative
$\omega$ that produce relative elliptic equilibria in which the bodies are at the
vertices of an equilateral triangle that rotates in the plane $z=$ constant.
Moreover, for every $\omega^2/m>0$ there is a unique $z>1$ as above.
\label{equilateralH}
\end{cor}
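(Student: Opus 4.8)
The plan is to imitate the spherical argument of Corollary~\ref{equilateralS}, specializing the general $n$-gon construction of Theorem~\ref{ngonH} to $n=3$; indeed the first assertion is already the case $n=3$ of that theorem, and I would record the explicit scalar equation it produces. First I would substitute the elliptic relative equilibrium ansatz of Definition~\ref{reH1} into the equations of motion (\ref{coordH}), taking $\rho:=\rho_1=\rho_2=\rho_3$, $z=(\rho^2+1)^{1/2}$, and $\alpha_1=0,\ \alpha_2=2\pi/3,\ \alpha_3=4\pi/3$, so that the three bodies sit at the vertices of an equilateral triangle inscribed in the circle $z=\mathrm{constant}$ of the hyperboloid. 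The key simplifying observation is that the Lorentz products are constant in time, $\mathbf{q}_i\boxdot\mathbf{q}_j=\rho^2\cos(\alpha_i-\alpha_j)-z^2$, which for the equilateral configuration equals $-(3\rho^2+2)/2$ for every pair $i\ne j$. By the symmetry of the configuration---exactly as in the proofs of Theorem~\ref{ngonH} and Corollary~\ref{equilateralS}---the equations for the three coordinates of each body collapse to a single scalar relation, so it suffices to examine the body with $\alpha=0$.

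Carrying out the reduction, the single surviving algebraic equation should be the hyperbolic analogue of (\ref{eq4}), namely
\begin{equation*}
\frac{8}{\sqrt{3}\,(3z^4-2z^2-1)^{3/2}}=\frac{\omega^2}{m}.
\end{equation*}
To obtain it I would compute $(\mathbf{q}_i\boxdot\mathbf{q}_j)^2-1=\tfrac14(3\rho^2+2)^2-1=\tfrac34\,(3z^4-2z^2-1)$ after substituting $\rho^2=z^2-1$, and note the factorization $3z^4-2z^2-1=(3z^2+1)(z^2-1)$, which makes the radicand manifestly positive for $z>1$. The constraint term contributes $\rho^2\omega^2 z$ and the two interaction terms contribute $-3m\rho^2 z/(c^2-1)^{3/2}$ with $c:=\mathbf{q}_i\boxdot\mathbf{q}_j$; dividing through by $\rho^2 z$ leaves the displayed relation.

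The remaining work is the function-theoretic analysis of $g(z):=\tfrac{8}{\sqrt{3}}\,(3z^4-2z^2-1)^{-3/2}$ on $(1,\infty)$. Since $\tfrac{d}{dz}(3z^4-2z^2-1)=4z(3z^2-1)>0$ for $z>1$, the radicand is strictly increasing, so $g$ is strictly decreasing; moreover $g(z)\to+\infty$ as $z\to 1^+$ and $g(z)\to 0^+$ as $z\to\infty$. Hence $g$ is a decreasing bijection of $(1,\infty)$ onto $(0,\infty)$. This yields both claims at once: for fixed $m>0$ and $z>1$ the right-hand side determines $\omega^2>0$, giving one positive and one negative $\omega$; and for each prescribed $\omega^2/m>0$ there is exactly one $z>1$ solving the equation.

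I expect the only real obstacle to be bookkeeping rather than conceptual: verifying that the $x_0$- and $y_0$-equations genuinely reduce to the same relation as the $z_0$-equation (so the ansatz is consistent and not overdetermined), exactly as in Theorem~\ref{ngonH}, and correctly tracking the sign conventions of the Lorentz product $\boxdot$ and of the constraint term $+(\dot x^2+\dot y^2-\dot z^2)z$ in (\ref{coordH}), whose sign differs from the spherical case and is responsible for the change from $1+2z^2-3z^4$ to $3z^4-2z^2-1$ under the radical. Once the scalar equation above is established, the monotonicity argument giving existence of $\pm\omega$ and uniqueness of $z$ is routine.
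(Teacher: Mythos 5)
Your proposal follows essentially the same route as the paper: substitute the equilateral ansatz with $\alpha_i=0,2\pi/3,4\pi/3$ and $z=(\rho^2+1)^{1/2}$ into (\ref{coordH}), reduce by symmetry to the single scalar relation (\ref{eq5}), and analyze the resulting function of $z$ on $(1,\infty)$. Your intermediate computations (the constant Lorentz product $-(3\rho^2+2)/2$, the factorization $3z^4-2z^2-1=(3z^2+1)(z^2-1)$, and the explicit monotonicity giving uniqueness of $z$) are correct and, if anything, make the uniqueness claim more fully justified than the paper's brief limit argument.
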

\begin{proof}
Substituting in system (\ref{coordH}) a solution of the form 
\begin{equation}
x_i=\rho\cos(\omega t+\alpha_i),\ \ y_i=\rho\sin(\omega t+\alpha_i),\ \ z_i=z,
\label{subst}
\end{equation}
with $z=\sqrt{\rho^2+1}$, $\alpha_1=0,
\alpha_2=2\pi/3, \alpha_3=4\pi/3$, we are led
to the equation
\begin{equation}
{8\over\sqrt{3}(3z^4-2z^2-1)^{3/2}}={\omega^2\over m}.\label{eq5}
\end{equation}
The left hand side is positive for $z>1$, tends to infinity when 
$z\to1$, and tends to zero when $z\to\infty$. So for any $z$ in this interval and $m>0$, there are a positive and a negative $\omega$ for which the above equation is
satisfied.
\end{proof}

As we already proved in the previous section, an equilateral triangle rotating in its own plane forms an elliptic relative equilibrium in ${\bf S}^2$ only if the three masses lying at its vertices are equal. The same result is true in ${\bf H}^2$, as we will further show.


\begin{proposition}
In the 3-body problem in ${\bf H}^2$, if the bodies $m_1, m_2, m_3$
are initially at the vertices of an equilateral triangle in the plane $z=$
constant for some $z>1$, then there are initial velocities that lead
to an elliptic relative equilibrium in which the triangle rotates in its own plane if 
and only if $m_1=m_2=m_3$.\label{equilH}
\end{proposition}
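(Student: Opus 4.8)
The plan is to mirror the proof of Proposition~\ref{equil}, transplanting it from the sphere to the hyperboloid. The forward implication is immediate: if $m_1=m_2=m_3$, then Corollary~\ref{equilateralH} already supplies, for the prescribed $z>1$, a positive and a negative $\omega$ for which the equilateral triangle rotating in the plane $z=\mathrm{constant}$ is an elliptic relative equilibrium, so admissible initial velocities exist.

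For the converse I would first extract the geometric constraint hidden in the hypothesis. Since the three bodies lie in a common plane $z=\mathrm{constant}$ on the hyperboloid $x^2+y^2-z^2=-1$, each satisfies $x_i^2+y_i^2=z^2-1$, so they all lie on a single circle of radius $\rho:=(z^2-1)^{1/2}$; the equilateral condition then forces their angular positions to be spaced by $2\pi/3$. Consequently, up to relabeling and an overall rotation, such a solution must be the elliptic relative equilibrium of Definition~\ref{reH1} with
\[
\rho_1=\rho_2=\rho_3=\rho,\qquad z_i=z=(\rho^2+1)^{1/2},\qquad \alpha_1=0,\ \alpha_2=\tfrac{2\pi}{3},\ \alpha_3=\tfrac{4\pi}{3}.
\]

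I would then substitute this ansatz into the equations of motion~\eqref{coordH}, exactly as in Corollary~\ref{equilateralH} but keeping the masses distinct. The decisive simplification is that, because the triangle is equilateral, the Lorentz products $c_{ij}:=-\mathbf{q}_i\boxdot\mathbf{q}_j$ share a single common value over the three pairs, so every denominator $(c_{ij}^2-1)^{3/2}$ occurring in the gravitational terms is the same nonzero constant. Using $\dot x_i^2+\dot y_i^2-\dot z_i^2=\rho^2\omega^2$ and the relations $\cos\alpha_j=\cos\alpha_{-j}$, $\sin\alpha_j=-\sin\alpha_{-j}$ to cancel the cross terms (as in the proof of Theorem~\ref{ngonH}), the $x$- and $y$-equations for each body collapse to one and the same scalar identity, and the three bodies together yield the linear system
\[
m_2+m_3=\gamma\omega^2,\qquad m_3+m_1=\gamma\omega^2,\qquad m_1+m_2=\gamma\omega^2,
\]
with $\gamma=\sqrt{3}\,(3z^4-2z^2-1)^{3/2}/4$, the nonzero constant already isolated in~\eqref{eq5}. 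Subtracting these equations in pairs gives $m_1=m_2=m_3$ (each equal to $\gamma\omega^2/2$), which is the desired conclusion.

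The main obstacle is purely computational: one must check that, for each body, the $x$-equation and the $y$-equation reduce to the very same relation and that the off-diagonal contributions cancel under the $2\pi/3$ symmetry, precisely as in the spherical computation of Proposition~\ref{equil}. The only genuinely hyperbolic inputs are the sign flip in the constraint term and the replacement of the denominator $(1-k_{ij}^2)^{3/2}$ by $(c_{ij}^2-1)^{3/2}$; these change the explicit value of $\gamma$ but leave intact the structure of the linear system, and hence the conclusion that it admits a solution only when the three masses coincide.
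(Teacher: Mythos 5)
Your proposal is correct and follows essentially the same route as the paper: the forward direction is delegated to the equal-mass $n$-gon result, and the converse comes from substituting the equilateral ansatz with distinct masses into equations \eqref{coordH} to obtain the cyclic system $m_i+m_j=\zeta\omega^2$ with $\zeta=\sqrt{3}(3z^4-2z^2-1)^{3/2}/4$, which forces $m_1=m_2=m_3$. The only difference is presentational: you make explicit the preliminary observation that coplanarity on the hyperboloid plus the equilateral condition pins down the ansatz, a step the paper leaves implicit.
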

\begin{proof}
The implication which shows that if $m_1=m_2=m_3$, the rotating 
equilateral triangle is an elliptic relative equilibrium, follows from Theorem \ref{ngonH}. 
To prove the other implication, we substitute into equations (\ref{coordH}) a solution 
of the form (\ref{subst}) with $i=1,2,3,\ \rho:=\rho_1, \rho_2, \rho_3,\ z:=z_1=z_2=z_3=
(\rho^2+1)^{1/2}$, and $\alpha_1=0, \alpha_2=2\pi/3, \alpha_3=4\pi/3$. The
computations then lead to the system
\begin{equation}
\begin{cases}
m_1+m_2=\zeta\omega^2\cr
m_2+m_3=\zeta\omega^2\cr
m_3+m_1=\zeta\omega^2,\cr
\end{cases}
\end{equation}
where $\zeta=\sqrt{3}(3z^4-2z^2-1)^{3/2}/4$. But for any $z=$ constant
with $z>1$, the above system has a solution only for 
$m_1=m_2=m_3=\zeta\omega^2/2$. Therefore the masses must be equal.
\end{proof}

The following result perfectly resembles Theorem \ref{lagranS}. The proof
works the same way, by just replacing the elliptical trigonometric 
functions with hyperbolic ones and changing the signs to reflect the
equations of motion in ${\bf H}^2$.

\begin{theorem}
For all Lagrangian solutions in ${\bf H}^2$, the masses $m_1, m_2$ and $m_3$ have to rotate on the same circle, whose plane must be orthogonal to the rotation axis, and
therefore $m_1=m_2=m_3$.\label{lagranH}
\end{theorem}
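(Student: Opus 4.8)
The plan is to transcribe the proof of Theorem \ref{lagranS} into the hyperbolic setting, replacing the Euclidean inner and cross products by the Lorentzian $\boxdot$ and $\boxtimes$ and flipping the relevant signs so as to match the equations of motion \eqref{coordH}. First I would write a Lagrangian solution as an elliptic relative equilibrium (Definition \ref{reH1}) in the form $x_i=\rho_i\cos(\omega t+\alpha_i)$, $y_i=\rho_i\sin(\omega t+\alpha_i)$, $z_i=(\rho_i^2+1)^{1/2}$, for $i=1,2,3$, with $\alpha_1=0$, $\alpha_2=a$, $\alpha_3=b$, $b>a>0$, allowing the three heights $z_i$ to differ a priori (i.e.\ the triangle may make a constant nonzero angle with the rotation axis). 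Since in ${\bf H}^2$ the mutual distances are monotone functions of $-{\bf q}_i\boxdot{\bf q}_j$, the equilateral hypothesis means that $c:={\bf q}_i\boxdot{\bf q}_j$ takes one and the same constant value for every pair $i\ne j$.

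The first ingredient is the angular-momentum integral $\sum_{i=1}^3 m_i{\bf q}_i\boxtimes\dot{\bf q}_i={\bf c}$. Because $\dot z_i=0$, the first two components of ${\bf q}_i\boxtimes\dot{\bf q}_i$ coincide with those of the Euclidean ${\bf q}_i\times\dot{\bf q}_i$, so the ``weak center-of-mass'' remark following Definition \ref{reH1} applies verbatim: as the configuration rotates rigidly about the $z$ axis, a nonzero horizontal part of ${\bf c}$ would itself rotate, contradicting that ${\bf c}$ is constant, so the two horizontal components must vanish for all $t$. Evaluating the $y$-component relation at $t=0$ gives the hyperbolic analogue of \eqref{withz},
$$
m_2\rho_2 z_2\sin a=-\,m_3\rho_3 z_3\sin b.
$$

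The second ingredient is the $\ddot y_1$ equation of \eqref{coordH}. Substituting the solution, using $\dot x_1^2+\dot y_1^2-\dot z_1^2=\rho_1^2\omega^2$ and the identity $z_1^2=1+\rho_1^2$, I would move every term proportional to $\sin\omega t$ to the left; crucially the position-coupling term $c(m_2+m_3)y_1$ is itself a multiple of $y_1=\rho_1\sin\omega t$, so it joins the left side and leaves on the right only $m_2\rho_2\sin(\omega t+a)+m_3\rho_3\sin(\omega t+b)$. The left side is then a fixed constant times $\sin\omega t$, which vanishes at $t=0$ irrespective of the value of that constant, yielding the analogue of \eqref{withoutz},
$$
m_2\rho_2\sin a=-\,m_3\rho_3\sin b.
$$
Dividing the two displayed relations (legitimate by the nondegeneracy of the masses, the radii $\rho_i$, and the angular offsets) forces $z_2=z_3$, and repeating the computation centered at the other bodies gives $z_1=z_2=z_3$. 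Hence all three bodies share a common height, rotate on a single circle in a plane orthogonal to the axis, and Proposition \ref{equilH} then forces $m_1=m_2=m_3$, which completes the proof.

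The step needing the most care is the bookkeeping in the third paragraph: one must verify that in ${\bf H}^2$ the coupling term enters \eqref{coordH} with the sign and structure that make it proportional to $\sin\omega t$ (and not to $\sin(\omega t+\alpha_j)$), so that it can be absorbed on the left and disappears at $t=0$. Everything else is a faithful transcription of the ${\bf S}^2$ argument, with $(1-k_{0j}^2)^{3/2}$ replaced by $(c^2-1)^{3/2}$ and the relation $r_i^2+z_i^2=1$ replaced by $z_i^2-\rho_i^2=1$.
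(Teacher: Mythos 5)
Your proposal is correct and takes essentially the same route as the paper: the paper gives no separate proof of Theorem \ref{lagranH}, stating only that the argument for Theorem \ref{lagranS} carries over after replacing the Euclidean products by the Lorentzian ones and adjusting signs, which is precisely the transcription you carry out. Your write-up in fact supplies the sign bookkeeping for the coupling term in \eqref{coordH} and the reduction to Proposition \ref{equilH} that the paper leaves implicit.
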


We will further prove an analogue of Theorem \ref{regeo3}.


\begin{theorem}
Consider the 3-body problem in ${\bf H}^2$ with equal masses, $m:=m_1=m_2=m_3$.
Fix the body of mass $m_1$ at $(0,0,1)$ and the bodies of masses $m_2$ and $m_3$
at the opposite ends of a diameter on the circle $z=$ constant. Then, for any $m>0$ and $z>1$, there are a positive and a negative $\omega$, which produce elliptic relative equilibria that rotate around the $z$ axis.\label{regeo3H}
\end{theorem}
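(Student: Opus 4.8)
The plan is to mimic the proof of Theorem \ref{regeo3}, transplanting the rotating configuration onto the hyperboloid and replacing the Euclidean dot products by the Lorentz product $\boxdot$. Concretely, I would substitute into the equations of motion (\ref{coordH}) the candidate solution
$$x_1=0,\ y_1=0,\ z_1=1,$$
$$x_2=\rho\cos\omega t,\ y_2=\rho\sin\omega t,\ z_2=z,$$
$$x_3=\rho\cos(\omega t+\pi),\ y_3=\rho\sin(\omega t+\pi),\ z_3=z,$$
with $\rho\ge 0$ and $z$ constants subject to $z^2=\rho^2+1$, so that each body lies on ${\bf H}^2$. This is precisely the elliptic relative equilibrium of Definition \ref{reH1} with $m_1$ on the rotation axis and $m_2,m_3$ at the opposite ends of a rotating diameter of the circle $z=$ constant.

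First I would dispatch the body $m_1$. Because $m_1$ sits on the $z$ axis with zero velocity while $m_2,m_3$ are placed symmetrically, the $x_1$ and $y_1$ equations reduce to identities (the horizontal components of the two attractions cancel), and in the $z_1$ equation the inner products ${\bf q}_1\boxdot{\bf q}_2={\bf q}_1\boxdot{\bf q}_3=-z$ force each bracketed numerator $z_j+({\bf q}_1\boxdot{\bf q}_j)z_1$ to vanish identically; hence $\ddot{\bf q}_1\equiv{\bf 0}$ and $m_1$ stays fixed. The substantive computation is then for $m_2$ (the case of $m_3$ being identical by symmetry). Here I would evaluate the two relevant Lorentz products, ${\bf q}_2\boxdot{\bf q}_1=-z$ and ${\bf q}_2\boxdot{\bf q}_3=-(2\rho^2+1)$, and use $z^2-1=\rho^2$ to reduce the two denominators $[({\bf q}_2\boxdot{\bf q}_j)^2-1]^{3/2}$ to $\rho^3$ and $8\rho^3z^3$, respectively. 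Since $\dot{x}_2^2+\dot{y}_2^2-\dot{z}_2^2=\rho^2\omega^2$, every term of the $x_2$ equation carries a common factor $\cos\omega t$ (and every term of the $y_2$ equation a factor $\sin\omega t$, yielding the same condition, while the $z_2$ equation is an identity). Cancelling this factor collapses the system to the single algebraic relation
$$\frac{\omega^2}{m}=\frac{4z^2+1}{4z^3(z^2-1)^{3/2}},$$
the exact hyperbolic counterpart of equation (\ref{ratio1}), obtained by replacing $1-z^2$ with $z^2-1$.

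Finally I would analyze the right-hand side as a function of $z$ on $(1,\infty)$: it is everywhere positive, tends to $+\infty$ as $z\to 1^+$, and tends to $0$ as $z\to\infty$. Hence for every $m>0$ and every $z>1$ the equation determines a value $\omega^2>0$, giving one positive and one negative $\omega$, the sign fixing the sense of rotation, exactly as claimed.

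The step I expect to be most delicate is the bookkeeping in the $m_2$ equation: correctly tracking the signs introduced by the Lorentz product $\boxdot$ and by the constraint term $+m_i(\dot{\bf q}_i\boxdot\dot{\bf q}_i){\bf q}_i$, and verifying that all the time dependence factors out cleanly so that the three scalar equations for $m_2$ are mutually consistent and reduce to a single condition. Everything else parallels Theorem \ref{regeo3} and is routine.
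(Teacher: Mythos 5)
Your proposal is correct and follows essentially the same route as the paper: substitute the rotating configuration with $z^2=\rho^2+1$ into system \eqref{coordH}, observe that all equations reduce to identities except a single algebraic relation, and arrive at exactly the paper's equation \eqref{ratio2}, $\frac{4z^2+1}{4z^3(z^2-1)^{3/2}}=\frac{\omega^2}{m}$, whose positivity for $z>1$ gives the two values of $\omega$. The intermediate Lorentz products and denominators you record are all correct, so the proposal matches the paper's proof in both method and outcome.
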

\begin{proof}
Substituting into the equations of motion (\ref{coordH}) a solution of the form
\begin{align*}
x_1&=0,& y_1&=0,& z_1&=1,\\
x_2&=\rho\cos\omega t,& y_2&=\rho\sin\omega t,& z_2&=z,\\
x_3&=\rho\cos(\omega t+\pi),& y_3&=\rho\sin(\omega t+\pi),& z_3&=z,
\end{align*}
where $\rho\ge0$ and $z\ge1$ are constants satisfying $z^2=\rho^2+1$, leads either to identities or to the algebraic equation
\begin{equation}
{4z^2+1\over 4z^3(z^2-1)^{3/2}}={\omega^2\over m}.\label{ratio2}
\end{equation}
The function on the left hand side is positive for $z>1$.
Therefore, for every $m>0$ and $z>1$, there are a positive and 
a negative $\omega$ that lead to a geodesic elliptic relative equilibrium. The sign of $\omega$ determines the sense of rotation.
\end{proof}
\begin{remark}
For every $\omega^2/m>0$, there is exactly one $z>1$ that satisfies
equation (\ref{ratio2}) (see Figure \ref{regeoH2}).
\end{remark}

\begin{figure} 
\centering 
\includegraphics[width=1.7in]{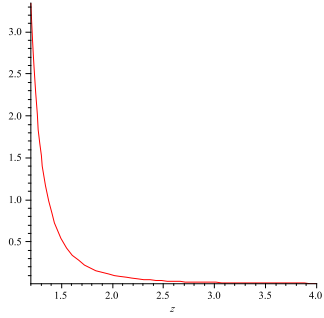}
\caption{\small The graph of the function $f(z)={4z^2+1\over 4z^3(z^2-1)^{3/2}}$ for
$z>1$.}\label{regeoH2}
\end{figure}


\subsection{Hyperbolic Relative Equilibria in $\bf H^2$}
We now prove some results concerning hyperbolic relative equilibria.
We first show that, in the $n$-body problem, hyperbolic relative equilibria 
do not exist along any given fixed geodesic of ${\bf H}^2$. In other words, the
bodies cannot chase each other along a geodesic and maintain the same
initial distances for all times.


\begin{theorem}
Along any fixed geodesic, the $n$-body problem in ${\bf H}^2$ has no
hyperbolic relative equilibria.\label{noreH}
\end{theorem}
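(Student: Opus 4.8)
The plan is to substitute the hyperbolic relative equilibrium ansatz (\ref{hyper}) into the equations of motion (\ref{coordH}), but only after first pinning down the geometry. By Definition \ref{reH} a hyperbolic relative equilibrium is generated by a hyperbolic rotation about the $x$ axis, and the hypothesis forces all bodies to remain on one and the same \emph{fixed} geodesic for all $t$; hence that geodesic must be invariant under the whole one-parameter group of these rotations. In the splitting ${\bf R}^3=\langle e_x\rangle\oplus\langle e_y,e_z\rangle$ the flow fixes the $x$ axis pointwise and acts on the $(y,z)$ plane as a genuine hyperbolic rotation, whose only invariant lines are the two null directions $y=\pm z$. A short eigenspace argument then shows that the unique plane through the origin that is invariant for all $t$ and still meets the sheet $z>0$ is $\{x=0\}$ (the planes $y=\pm z$ miss the hyperboloid). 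So the fixed geodesic must be $\{x=0\}\cap{\bf H}^2$, which forces $x_i=0$ and therefore $\rho_i=(1+x_i^2)^{1/2}=1$ for every body.

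With $x_i=0$, $y_i=\sinh(\omega t+\alpha_i)$, $z_i=\cosh(\omega t+\alpha_i)$, I would first record the two facts that make the motion a relative equilibrium: the Lorentz products $w_{ij}:={\bf q}_i\boxdot{\bf q}_j=y_iy_j-z_iz_j=-\cosh(\alpha_i-\alpha_j)$ are constant in $t$ (so the mutual distances are preserved, as they must be), and $\dot{x}_i^2+\dot{y}_i^2-\dot{z}_i^2=\omega^2$, while $\ddot{y}_i=\omega^2 y_i$ and $\ddot{z}_i=\omega^2 z_i$. The addition formulas for $\sinh$ and $\cosh$ then give the two clean identities $y_j+w_{ij}y_i=z_i\sinh(\alpha_j-\alpha_i)$ and $z_j+w_{ij}z_i=y_i\sinh(\alpha_j-\alpha_i)$, and $w_{ij}^2-1=\sinh^2(\alpha_i-\alpha_j)$. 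Substituting into (\ref{coordH}), the constraint term $\omega^2 y_i$ (resp.\ $\omega^2 z_i$) cancels against $\ddot{y}_i$ (resp.\ $\ddot{z}_i$), the $x$ equations reduce to identities, and \emph{both} the $y$ and the $z$ equations collapse to the single family of conditions
\begin{equation*}
\sum_{j=1,\,j\ne i}^{n}\frac{m_j\,\sinh(\alpha_j-\alpha_i)}{\big[\sinh^2(\alpha_i-\alpha_j)\big]^{3/2}}=0,\qquad i=1,\dots,n. \tag{$\star$}
\end{equation*}
The $y$ equations produce $(\star)$ after dividing by $z_i=\cosh(\omega t+\alpha_i)>0$; the $z$ equations produce $y_i$ times the same (constant) sum, which must therefore vanish since $y_i\not\equiv0$.

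The contradiction comes from the body at the front of the chase. Order the phases so that $\alpha_1<\alpha_2<\cdots<\alpha_n$ (they are distinct, since $\alpha_i=\alpha_j$ is a collision) and examine $(\star)$ for $i=n$. Every summand then has $\alpha_j-\alpha_n<0$, so $\sinh(\alpha_j-\alpha_n)<0$, while $m_j>0$ and the denominator is positive; the whole sum is strictly negative and cannot be zero. This contradicts $(\star)$, proving that no hyperbolic relative equilibrium can lie on a fixed geodesic. The conclusion is exactly the expected physical picture: the leading body has all the others behind it along the geodesic, feels an unbalanced tangential pull, and so cannot translate rigidly with the flow.

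The step I expect to require the most care is the geometric reduction in the first paragraph—establishing rigorously that a fixed geodesic carrying such a motion is forced to be $\{x=0\}\cap{\bf H}^2$, hence $x_i=0$ and $\rho_i=1$; once that is secured, the remainder is a finite piece of hyperbolic trigonometry followed by the one-line sign argument. A secondary point worth stating explicitly is the justification for passing from the $z$ equations to $(\star)$ despite $y_i$ vanishing at isolated times: the bracketed sums are independent of $t$, so an identity holding at any single instant with $y_i\ne0$ holds for all $t$.
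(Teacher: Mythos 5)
Your proof is correct and follows essentially the same route as the paper: reduce to the geodesic $x=0$, substitute the ansatz with $\rho_i=1$, and derive a contradiction from the sign of the tangential force along the geodesic. Your execution is in fact tidier --- the paper reaches your condition $(\star)$ only implicitly, via a sign analysis of the $y_i$-equation on a time interval, concluding that no $\alpha_i$ can be strictly maximal and hence that two phases must coincide (a singular configuration), whereas you factor out $\cosh(\omega t+\alpha_i)$ explicitly and read off the contradiction directly at the leading body; your opening paragraph also supplies the invariance argument behind the paper's bare ``without loss of generality.''
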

\begin{proof}
Without loss of generality, we can prove this result for the geodesic $x=0$.
We will show that equations (\ref{coordH}) do not have solutions of the 
form (\ref{hyper}) with $x_i=0$ and (consequently) $\rho_i=1,\ i=1,\dots,n$.
Substituting
\begin{equation}
x_i=0,\ \ y_i=\sinh(\omega t+\alpha_i),\ \ {\rm and} \ \ 
z_i=\cosh(\omega t+\alpha_i)\label{hypre}
\end{equation}
into system (\ref{coordH}), the equation corresponding to the $y_i$ coordinate becomes
\begin{equation}
\sum_{j=1,j\ne i}^n{m_j[\sinh(\omega t+\alpha_j)-\cosh(\alpha_i-\alpha_j)\sinh(\omega t+\alpha_i)]\over|\sinh(\alpha_i-\alpha_j)|^3}=0.
\label{tri}
\end{equation}
Assume now that $\alpha_i>\alpha_j$ for all $j\ne i$. Let $\alpha_{M(i)}$ be the
maximum of all $\alpha_j$ with $j\ne i$. Then for  
$t\in(-\alpha_{M(i)}/\omega,-\alpha_i/\omega)$, we have that $\sinh(\alpha t+\alpha_j)<0$ for all $j\ne i$ and $\sinh(\alpha t+\alpha_i)>0$. Therefore the left hand side of 
equation (\ref{tri}) is negative in this interval, so the identity cannot take place
for all $t\in{\bf R}$. It follows that a necessary condition to satisfy equation  (\ref{tri}) is that $\alpha_{M(i)}\ge\alpha_i$. But this inequality must be
verified for all $i=1,\dots,n$, a fact that can be written as:
$$\alpha_1\ge\alpha_2 \ \ {\rm or}\ \ \alpha_1\ge\alpha_3\ \ {\rm or}\ \ \dots \ \ {\rm or} \ \
\alpha_1\ge\alpha_n,$$
$$\alpha_2\ge\alpha_1 \ \ {\rm or}\ \ \alpha_2\ge\alpha_3\ \ {\rm or}\ \ \dots \ \ {\rm or} \ \
\alpha_2\ge\alpha_n,$$
$$ \dots $$
$$\alpha_n\ge\alpha_1 \ \ {\rm or}\ \ \alpha_n\ge\alpha_2\ \ {\rm or}\ \ \dots \ \ {\rm or} \ \
\alpha_n\ge\alpha_{n-1}.$$
The constants $\alpha_1,\dots,\alpha_n$ must satisfy one inequality from each 
of the above lines. But every possible choice implies the existence of at least 
one $i$ and one $j$ with $i\ne j$ and $\alpha_i=\alpha_j$. For those $i$ and $j$, 
$\sinh(\alpha_i-\alpha_j)=0$, so equation (\ref{tri}) is undefined, therefore
equations (\ref{coordH}) cannot have solutions of the form (\ref{hypre}). Consequently
hyperbolic relative equilibria do not exist along the geodesic $x=0$.
\end{proof}

Theorem \ref{noreH} raises the question whether hyperbolic relative equilibria do exist at all. For three equal masses, the answer is given by the following result, which shows
that, in ${\bf H}^2$, three bodies can move along hyperbolas lying in parallel planes
of ${\bf R}^3$, maintaining the initial distances among themselves and remaining on the same geodesic (which rotates hyperbolically). The existence of such solutions is surprising. They rather resemble fighter planes flying in formation than celestial bodies moving under the action of gravity alone.


\begin{theorem}
In the 3-body problem of equal masses, $m:=m_1=m_2=m_3$, in ${\bf H}^2$, 
for any given $m>0$ and $x\ne0$, there exist a positive and a negative $\omega$
that lead to hyperbolic relative equilibria. \label{hyp}
\end{theorem}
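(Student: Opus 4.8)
The plan is to produce a hyperbolic relative equilibrium by rigidly transporting an explicit, symmetric three-body configuration under the one-parameter group of hyperbolic rotations about the $x$ axis, and then to show that the equations of motion \eqref{coordH} collapse to a single scalar relation among $\omega^2$, $m$, and $x$ that always admits a positive solution. Since Theorem \ref{noreH} forbids hyperbolic relative equilibria on a \emph{fixed} geodesic, I would place the three masses on a geodesic that is itself carried along by the flow: one body on the rotation axis $x=0$ and the other two symmetrically on either side of it, at $x_2=x$ and $x_3=-x$.

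Concretely, I would try the ansatz of the form \eqref{hyper} given by
\begin{align*}
x_1 &= 0, & y_1 &= \sinh\omega t, & z_1 &= \cosh\omega t,\\
x_2 &= x, & y_2 &= \rho\sinh\omega t, & z_2 &= \rho\cosh\omega t,\\
x_3 &= -x, & y_3 &= \rho\sinh\omega t, & z_3 &= \rho\cosh\omega t,
\end{align*}
with $\rho:=(1+x^2)^{1/2}$, so that $\rho_1=1$, $\rho_2=\rho_3=\rho$, and all phases vanish. At $t=0$ the three masses sit on the geodesic $y=0$ at the collinear, reflection-symmetric positions $x_1=0$, $x_2=-x_3=x$ (an Eulerian-type arrangement). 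A direct check—or, conceptually, the fact that each such rotation is an isometry of ${\bf H}^2$—shows that the Lorentz products $\q_2\boxdot\q_3=-(1+2x^2)$ and $\q_1\boxdot\q_2=\q_1\boxdot\q_3=-\rho$ are constant in time, so the mutual distances are preserved automatically and the configuration is collisionless for $x\neq0$, since $(\q_1\boxdot\q_2)^2-1=x^2>0$ and $(\q_2\boxdot\q_3)^2-1=4x^2\rho^2>0$. It remains only to verify that the ansatz solves \eqref{coordH}.

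First I would record that the constraints $\q_i\boxdot\q_i=-1$ and $\q_i\boxdot\dot\q_i=0$ hold identically and that $\dot\q_i\boxdot\dot\q_i=\rho_i^2\omega^2$. For the central body the reflection symmetry $(x,y,z)\mapsto(-x,y,z)$ makes the pulls of $m_2$ and $m_3$ cancel, and substituting the ansatz collapses all three components of the $i=1$ equation to the identity $\ddot\q_1=\omega^2\q_1$ with a vanishing gravitational contribution, so these equations are satisfied trivially. For $m_2$ (and, symmetrically, $m_3$), substituting into the $x_2$, $y_2$, and $z_2$ equations and using the constant products above reduces each component, after dividing by the common time factor, to the single relation
$$\omega^2=\frac{m(4\rho^2+1)}{4\rho^3x^3}=\frac{m(4x^2+5)}{4(1+x^2)^{3/2}x^3},$$
taking $x>0$ without loss of generality (the case $x<0$ is its mirror image). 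Since the right-hand side is strictly positive for every $m>0$ and $x\neq0$, it fixes $\omega^2>0$, yielding exactly one positive and one negative $\omega$, as claimed.

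The substantive point—and the step I expect to demand the most care—is the \emph{consistency} of this reduction for $m_2$: the $y_2$ and $z_2$ equations depend on $t$ nonlinearly through $\sinh\omega t$ and $\cosh\omega t$, and a priori they could impose conditions incompatible with the one coming from the $x_2$ equation. Verifying that all three components yield the \emph{same} algebraic equation, rather than an overdetermined inconsistent system, is the crux; here the Lorentzian signature matters, because the constraint (``centrifugal'') term $(\dot x_i^2+\dot y_i^2-\dot z_i^2)\q_i$ enters \eqref{coordH} with the opposite sign to the spherical case and must be combined with $\dot\q_i\boxdot\dot\q_i=\rho_i^2\omega^2$. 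Once this collapse is confirmed, existence of $\omega$ is immediate from the positivity of the right-hand side, and I would remark that the computation for $m_2$ never uses the value of $m_1$ beyond $m_2=m_3$, so the same argument yields hyperbolic relative equilibria when the central mass differs from the two outer ones.
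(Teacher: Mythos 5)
Your proposal is correct and follows essentially the same route as the paper: the identical ansatz with $m_1$ on the axis $x=0$ and $m_2,m_3$ at $x=\pm x$ carried by the hyperbolic rotation, the same constant Lorentz products, and the same reduction to the single equation $\omega^2/m=(4x^2+5)/\bigl(4x^2|x|(x^2+1)^{3/2}\bigr)$. The consistency check you flag as the crux is exactly the verification the paper performs, and your closing remark about unequal central mass matches the paper's subsequent remark.
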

\begin{proof}
We will show that ${\bf q}_i(t)=(x_i(t),y_i(t),z_i(t)), \ i=1,2,3$, is a hyperbolic relative equilibrium of system (\ref{coordH}) for
\begin{align*}
x_1&=0,& y_1&=\sinh\omega t,&  z_1&=\cosh\omega t,\\
x_2&=x,&  y_2&=\rho\sinh\omega t,&  z_2&=\rho\cosh\omega t,\\
x_3&=-x,&  y_3&=\rho\sinh\omega t,&   z_3&=\rho\cosh\omega t,
\end{align*}
where $\rho=(1+x^2)^{1/2}$.\label{hypre3H}
Notice first that
$$x_1x_2+y_1y_2-z_1z_2=x_1x_3+y_1y_3-z_1z_3=-\rho,$$
$$x_2x_3+y_2y_3-z_2z_3=-2x^2-1,$$
$$
\dot{x}_1^2+\dot{y}_1^2-\dot{z}_1^2=\omega^2,\ \
\dot{x}_2^2+\dot{y}_2^2-\dot{z}_2^2=\dot{x}_3^2+\dot{y}_3^2-\dot{z}_3^2=
\rho^2\omega^2.
$$
Substituting the above coordinates and expressions into equations
(\ref{coordH}), we are led either to identities or to the equation
\begin{equation}
{4x^2+5\over 4x^2|x|(x^2+1)^{3/2}}={\omega^2\over m},\label{eq7}
\end{equation}
from which the statement of the theorem follows.
\end{proof}
\begin{remark}
The left hand side of equation (\ref{eq7}) is undefined for $x=0$, but it tends
to infinity when $x\to0$ and to 0 when $x\to\pm\infty$. This means 
that for each $\omega^2/m>0$ there are exactly one positive and
one negative $x$ (equal in absolute value), which satisfy the equation.
\end{remark}
\begin{remark}
Theorem \ref{hyp} is also true if, say, $m:=m_1$ and $M:=m_2=m_3$.
Then the analogue of equation (\ref{eq7}) is
$$
{m\over x^2|x|(x^2+1)^{1/2}}+{M\over4x^2|x|(x^2+1)^{3/2}}=\omega^2,
$$
and it is obvious that for any $m,M>0$ and $x\ne 0$, there are a positive
and negative $\omega$ satisfying the above equation.
\end{remark}
\begin{remark}
Theorem \ref{hyp} also works for two bodies of equal masses, $m:=m_1=m_2$,
of coordinates
$$
x_1=-x_2=x, y_1=y_2=\rho\sinh\omega t, z_1=z_2=\rho\cosh\omega t,
$$
where $x$ is a positive constant and $\rho=(x^2+1)^{3/2}$. Then the analogue
of equation (\ref{eq7}) is
$$
{1\over 4x^2|x|(x^2+1)^{3/2}}={\omega^2\over m},
$$
which obviously supports a statement similar to the one in Theorem \ref{hyp}.
\end{remark}


\subsection{Parabolic Relative Equilibria in $\bf H^2$}

We now show that there are no parabolic relative equilibria. More precisely, we prove the following result.

\begin{theorem}\label{thpar}
 The $n$-body problem in ${\bf H}^2$ has no parabolic relative equilibria.
\end{theorem}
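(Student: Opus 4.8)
The plan is to avoid substituting the parabolic ansatz \eqref{parabol} directly into the equations of motion \eqref{coordH}, whose gradient terms would produce an unwieldy computation, and instead to exploit the single fact that every solution must conserve the total angular momentum $\sum_{i=1}^n{\bf q}_i\boxtimes{\bf p}_i={\bf c}$, with ${\bf p}_i=m_i\dot{\bf q}_i$. A parabolic relative equilibrium is, by definition, a solution of \eqref{coordH}; so if one existed, the vector $\sum_i m_i\,{\bf q}_i\boxtimes\dot{\bf q}_i$ would have to be independent of $t$, and I aim to show that this fails.

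First I would differentiate \eqref{parabol}. Writing $d_i:=c_i-b_i$, a short computation gives $\dot{x}_i=d_i$ (a constant in time), $\dot{y}_i=\dot{z}_i=a_i+d_it$, and, crucially, $y_i-z_i=b_i-c_i=-d_i$, also constant. These identities are exactly what one expects from the fact that \eqref{parabol} is the orbit ${\bf q}_i(t)=P(t){\bf q}_i(0)$ of the parabolic one-parameter subgroup $P$, whose generator annihilates the null direction $(0,1,1)$; verifying this structural interpretation is a convenient sanity check before proceeding.

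Next I would compute \emph{only} the first component of $\sum_i m_i\,{\bf q}_i\boxtimes\dot{\bf q}_i$. Since the first component of ${\bf a}\boxtimes{\bf b}$ is $a_yb_z-a_zb_y$, the $i$-th term equals $y_i\dot{z}_i-z_i\dot{y}_i=(a_i+d_it)(y_i-z_i)=-d_i(a_i+d_it)$. Summing, the first component of the total angular momentum is $-\sum_i m_id_ia_i-\big(\sum_i m_id_i^2\big)t$, a linear polynomial in $t$. Conservation forces the coefficient of $t$ to vanish, i.e. $\sum_{i=1}^n m_id_i^2=0$; since every $m_i>0$, this yields $d_i=0$ for all $i$.

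Finally I would extract the contradiction from $d_i=0$. The condition $d_i=c_i-b_i=0$ means $b_i=c_i$, so the admissibility constraint $a_i^2+b_i^2-c_i^2=-1$ required of a parabolic relative equilibrium collapses to $a_i^2=-1$, which is impossible. Hence no parabolic relative equilibrium can exist. The only real obstacle here is conceptual rather than computational: recognizing that the angular-momentum integral alone already obstructs the motion, so that the full force computation is unnecessary. Once that is seen, the remaining work is just careful bookkeeping of the Lorentzian cross product $\boxtimes$ and its signs.
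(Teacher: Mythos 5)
Your proof is correct and follows essentially the same route as the paper's: both differentiate the parabolic ansatz, observe that the first component of the conserved angular momentum is $-\sum_i m_i a_i d_i-\bigl(\sum_i m_i d_i^2\bigr)t$ with $d_i=c_i-b_i$, force $d_i=0$, and derive the contradiction $a_i^2=-1$ from the constraint. Your write-up is just a more carefully organized version of the argument already in the paper.
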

\begin{proof}
 Let $x_i,y_i$, and $z_i$ be as in the definition of parabolic relative equilibria (\ref{parabol}).
 Then $\dot x_i=-b_i+c_i$, $\dot y_i=a_i+(c_i-b_i)t$, and $a_i+(c_i-b_i)t$. Thus, the first component of the angular momentum is $\sum_i m_i a_i(b_i-c_i) -\sum_i m_i(b_i-c_i)^2t$. It follows that $b_i=c_i$ because the first component of the angular momentum must be constant. 
But $a_i^2+b_i^2-c_i^2=-1$, hence $a_i^2=-1$, which is impossible, since $a_i$  is a real number. 
\end{proof}


\section{ Saari's conjecture}

In 1970, Don Saari conjectured that solutions of the classical $n$-body problem with constant moment of inertia are relative equilibria, \cite{Saa}, \cite{Saa1}. The moment of inertia is defined in classical Newtonian celestial mechanics as ${1\over 2}\sum_{i=1}^nm_i{\bf q}_i\cdot{\bf q}_i$, a
function that gives a crude measure of the bodies' distribution in space. But this definition makes little sense in ${\bf S}^2$ and ${\bf H}^2$ because 
${\bf q}_i\odot{\bf q}_i=\pm1$ for every $i=1,\dots,n$. To avoid this problem, we 
adopt the standard point of view used in physics, and define the moment of inertia in ${\bf S}^2$ or ${\bf H}^2$ about the direction of the angular momentum. But while 
fixing an axis in ${\bf S}^2$  does not restrain generality, the symmetry of ${\bf H}^2$ makes us distinguish between two cases. 

Indeed, in ${\bf S}^2$ we can assume that the rotation takes place around 
the $z$ axis, and thus define the moment of inertia as
\begin{equation}
{\bf I}:=\sum_{i=1}^nm_i(x_i^2+y_i^2).\label{momz}
\end{equation}
In ${\bf H}^2$, all possibilities can be reduced via suitable isometric transformations
(see Appendix) to: (i) the symmetry about the $z$ axis, when the moment of inertia takes the same form (\ref{momz}), and (ii) the symmetry about the $x$ axis, which corresponds to hyperbolic rotations, when---in agreement with the definition of the Lorentz product (see Appendix)---we define the moment of inertia as
\begin{equation}
{\bf J}:=\sum_{i=1}^nm_i(y_i^2-z_i^2).\label{momx}
\end{equation}
The case of the parabolic roations will not be considered because there are no parabolic relative equilibria. 

\smallskip

These definitions allow us to formulate the following conjecture:

\medskip

\noindent {\bf Saari's Conjecture in ${\bf S}^2$ and ${\bf H}^2$}. {\it For the 
gravitational $n$-body problem in ${\bf S}^2$ and ${\bf H}^2$, every solution 
that has a constant moment of inertia about the direction of the angular 
momentum is either an elliptic relative equilibrium in ${\bf S}^2$ or ${\bf H}^2$, 
or a hyperbolic relative equilibrium in ${\bf H}^2$.} 

\medskip

By generalizing an idea we used in the Euclidean case, \cite{Dia}, \cite{Dia2}, 
we can now settle this conjecture when the bodies undergo another constraint. More precisely, we will prove the following result.


\begin{theorem}
For the gravitational $n$-body problem in ${\bf S}^2$ and ${\bf H}^2$, every 
solution with constant moment of inertia about the direction of the angular 
momentum for which the bodies remain aligned along a geodesic that rotates
elliptically in ${\bf S}^2$ or ${\bf H}^2$, or hyperbolically in ${\bf H}^2$, is 
either an elliptic relative equilibrium in ${\bf S}^2$ or ${\bf H}^2$, or a hyperbolic 
relative equilibrium in ${\bf H}^2$.\label{Saari}
\end{theorem}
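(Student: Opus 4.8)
The plan is to handle the three admissible regimes --- elliptic rotation in ${\bf S}^2$, elliptic rotation in ${\bf H}^2$, and hyperbolic rotation in ${\bf H}^2$ --- by a single mechanism, treating the spherical elliptic case in detail and indicating the sign-adjusted translations. First I would adapt coordinates to the moving geodesic. Since the rotation is about the direction of the angular momentum, I place that direction along the $z$ axis and write each body as a point of the rotating geodesic. In the spherical elliptic case this geodesic is a meridian, so I set ${\bf q}_i=(\sin\psi_i\cos\phi,\ \sin\psi_i\sin\phi,\ \cos\psi_i)$, where $\psi_i=\psi_i(t)$ is the arclength position of $m_i$ along the geodesic and $\phi=\phi(t)$ is the rotation angle \emph{shared} by all bodies (the two half-meridians at $\phi$ and $\phi+\pi$ are recovered by letting $\sin\psi_i$ change sign). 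The reduction of an elliptically rotating geodesic to this meridian form --- equivalently, the fact that the carrying geodesic must pass through the rotation axis rather than precess as a tilted great circle --- is the first point that needs care, and I return to it at the end.

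At each instant the tangent plane to ${\bf S}^2$ at ${\bf q}_i$ is spanned by the along-geodesic unit vector ${\bf e}_i^\psi=(\cos\psi_i\cos\phi,\cos\psi_i\sin\phi,-\sin\psi_i)$ and the azimuthal unit vector ${\bf e}_i^\phi=(-\sin\phi,\cos\phi,0)$, the latter orthogonal to the geodesic. The key structural fact is that gravity acts purely along the geodesic: from ${\bf q}_j=\cos(\psi_j-\psi_i){\bf q}_i+\sin(\psi_j-\psi_i){\bf e}_i^\psi$ one gets ${\bf q}_j-({\bf q}_i\cdot{\bf q}_j){\bf q}_i=\sin(\psi_j-\psi_i){\bf e}_i^\psi$ and $1-({\bf q}_i\cdot{\bf q}_j)^2=\sin^2(\psi_j-\psi_i)$, so the gradient \eqref{grad} becomes $\nabla_{{\bf q}_i}U_1=m_i\,{\bf e}_i^\psi\sum_{j\ne i}m_j\,\mathrm{sign}\,\sin(\psi_j-\psi_i)/\sin^2(\psi_j-\psi_i)$. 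Because the constraint force $-m_i(\dot{\bf q}_i\cdot\dot{\bf q}_i){\bf q}_i$ is normal to ${\bf S}^2$, projecting \eqref{coordS} onto ${\bf e}_i^\phi$ annihilates both contributions and leaves only $\ddot\phi\sin\psi_i+2\dot\phi\dot\psi_i\cos\psi_i=0$, that is $\tfrac{d}{dt}\big(\dot\phi\sin^2\psi_i\big)=0$. Hence $\dot\phi\sin^2\psi_i=c_i$ is constant for each $i$; this is exactly the conservation of the $z$-component $x_i\dot y_i-y_i\dot x_i=\dot\phi\sin^2\psi_i$ of each individual body's angular momentum, which holds precisely because neither the meridian-directed attraction nor the central constraint force exerts a torque about the rotation axis.

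Constancy of the moment of inertia then forces rigidity. If $\dot\phi$ is not identically zero, each $\sin^2\psi_i=c_i/\dot\phi$, so by \eqref{momz} we have ${\bf I}=\sum_i m_i\sin^2\psi_i=\big(\sum_i m_ic_i\big)/\dot\phi$; as at least one $c_i$ is positive (otherwise all bodies sit on the axis, a degeneracy), $\sum_i m_ic_i>0$, and ${\bf I}=\mathrm{const}$ forces $\dot\phi=\omega=\mathrm{const}$, whence every $\sin^2\psi_i$, and so (by continuity) every $\psi_i$, is constant --- precisely the elliptic relative equilibrium of Definition \ref{reS}. The subcase $\dot\phi\equiv0$ is collinear motion along a fixed meridian with constant ${\bf I}$, the geodesic analogue of the Euclidean collinear Saari problem of \cite{Dia2}, to be disposed of by the same one-dimensional argument (or excluded as a non-rotation). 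The two hyperbolic-plane regimes run identically on the Weierstrass model. For elliptic rotation I use the hyperbolic meridian ${\bf q}_i=(\sinh v_i\cos\phi,\sinh v_i\sin\phi,\cosh v_i)$: the ${\bf e}_i^\phi$-projection of \eqref{coordH} gives $\tfrac{d}{dt}(\dot\phi\sinh^2 v_i)=0$, and ${\bf I}=\sum_i m_i\sinh^2 v_i=\mathrm{const}$ forces $\dot\phi$ and every $v_i$ constant (Definition \ref{reH1}). For hyperbolic rotation the motion is a Lorentz boost about the $x$ axis, and I take ${\bf q}_i=(\sinh\xi_i,\ \cosh\xi_i\sinh\chi,\ \cosh\xi_i\cosh\chi)$ with common boost $\chi=\chi(t)$, exactly the form of Theorem \ref{hyp}; the conserved per-body quantity from the direction orthogonal to the geodesic is now the $x$-component $y_i\dot z_i-z_i\dot y_i=-\cosh^2\xi_i\,\dot\chi$ of the Lorentzian angular momentum \eqref{angular}, so $\cosh^2\xi_i\,\dot\chi=c_i$, and since \eqref{momx} gives ${\bf J}=-\sum_i m_i\cosh^2\xi_i=-\big(\sum_i m_ic_i\big)/\dot\chi$, constancy of ${\bf J}$ forces $\dot\chi=\omega$ and every $\xi_i$ constant (Definition \ref{reH}). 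Throughout, the only care required is with the Lorentzian signs carried by $\boxdot$ and $\boxtimes$.

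The main obstacle is the very first step: showing that an elliptically (resp.\ hyperbolically) rotating geodesic carrying a genuine solution must pass through the rotation axis, so that the clean meridian/boost parametrization is legitimate and not merely convenient. I expect to settle this from the azimuthal equation itself: for a tilted, precessing great circle the requirement that the tangential acceleration \emph{perpendicular} to the geodesic vanish for two or more bodies in general position is overdetermined and forces the tilt to the meridian value (or collapses the configuration), after which the per-body angular-momentum conservation and the constant moment of inertia finish the argument as above. Verifying this reduction rigorously, together with the boundary subcases --- a body sitting on the axis, as in Theorems \ref{regeo3} and \ref{regeo3H}, which has $c_i=0$ and stays fixed --- is where the real work lies.
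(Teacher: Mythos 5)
Your mechanism is the same as the paper's: the gravitational force on each body lies in the instantaneous plane of the carrying geodesic, hence exerts no torque about the rotation axis, so each body's axial angular-momentum component $L_i^z={\bf I}_i\,\omega(t)$ (resp.\ $L_i^x={\bf J}_i\,\omega(t)$) is individually conserved; summing and using ${\bf I}=\mathrm{const}$ (resp.\ ${\bf J}=\mathrm{const}$) forces $\omega$ constant, whence each ${\bf I}_i$, hence each latitude $z_i$ (resp.\ $x_i$), is constant. You carry this out in an explicit meridian/boost parametrization where the paper argues vectorially with $\dot{\bf L}_i=m_i\,{\bf q}_i\otimes\widetilde\nabla_{{\bf q}_i}U_\kappa({\bf q})$, but the content is identical, and your coordinate computations are correct.

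The one place you diverge is the ``main obstacle'' you defer to the end: proving that the rotating geodesic must contain the rotation axis (be a meridian rather than a precessing tilted great circle). The paper does not prove this either --- it simply declares ``without loss of generality the geodesic passes through $(0,0,1)$ and rotates about the $z$-axis,'' i.e.\ it reads the hypothesis ``rotates elliptically about the direction of the angular momentum'' as already asserting the meridian form. So you are holding yourself to a stronger standard than the published argument; if you do want to close that case, your overdetermination idea via the off-geodesic projection is a reasonable route, but be aware it is extra work beyond what the theorem, as the authors interpret it, requires. Likewise your $\dot\phi\equiv 0$ subcase is excluded in the paper by the standing assumption $\omega(t)\ne 0$, and your degenerate subcase (all bodies on the axis) is a singular configuration, so neither needs separate treatment.
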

\begin{proof}
Let us first prove the case in which ${\bf I}$ is constant in ${\bf S}^2$ and ${\bf H}^2$,
i.e.~when the geodesic rotates elliptically.
According to the above definition of $\bf I$, we can assume without loss of generality that the geodesic passes through the point $(0,0,1)$ and rotates about the $z$-axis
with angular velocity $\omega(t)\ne 0$. The angular momentum of each body is ${\bf L}_i=m_i{\bf q}_i\otimes \dot{\bf q}_i$, so its derivative with respect to $t$ takes the form
$$\dot{\bf L}_i=m_i\dot{\bf q}_i\otimes\dot{\bf q}_i+m_i{\bf q}_i\otimes\ddot{\bf q}_i=
m_i{\bf q}_i\otimes{\widetilde\nabla_{{\bf q}_i}} U_\kappa({\bf q})-m_i\dot{\bf q}_i^2{\bf q}_i\otimes{\bf q}_i
=m_i{\bf q}_i\otimes{\widetilde\nabla_{{\bf q}_i}} U_\kappa({\bf q}),$$
with $\kappa=1$ in ${\bf S}^2$ and $\kappa=-1$ in ${\bf H}^2$.
Since ${\bf q}_i\odot{\widetilde\nabla_{{\bf q}_i}} U_\kappa({\bf q})=0$, it follows that 
${\widetilde\nabla_{{\bf q}_i}} U_\kappa({\bf q})$ is either zero or orthogonal to ${\bf q}_i$.
(Recall that orthogonality here is meant in terms of the standard inner product because,
both in ${\bf S}^2$ and ${\bf H}^2$, ${\bf q}_i\odot{\widetilde\nabla_{{\bf q}_i}} U_\kappa({\bf q})={\bf q}_i\cdot{\nabla_{{\bf q}_i}} U_\kappa({\bf q})$.)
If ${\widetilde\nabla_{{\bf q}_i}} U_\kappa({\bf q})={\bf 0}$, then $\dot{\bf L}_i={\bf 0}$, so $\dot{L}_i^z=0$.

Assume now that ${\widetilde\nabla_{{\bf q}_i}} U_\kappa({\bf q})$ is orthogonal to 
${\bf q}_i$. Since all the particles are on a geodesic, their corresponding position
vectors are in the same plane, therefore any linear combination of them is in this
plane, so ${\widetilde\nabla_{{\bf q}_i}} U_\kappa({\bf q})$ is in the same plane. 
Thus $\widetilde\nabla_{{\bf q}_i} U_\kappa({\bf q})$ and ${\bf q}_i$ 
are in a plane orthogonal to the $xy$ plane. It follows that $\dot{\bf L}_i$ is parallel
to the $xy$ plane and orthogonal to the $z$ axis. Thus the $z$ component,
$\dot{L}_i^z$, of $\dot{\bf L}_i$ is $0$, the same conclusion we obtained in the
case ${\widetilde\nabla_{{\bf q}_i}} U_\kappa({\bf q})={\bf 0}$. Consequently, $L_i^z=c_i$, 
where $c_i$ is a constant. 

Let us also remark that since the angular momentum and angular velocity vectors are
parallel to the $z$ axis, $L_i^z={\bf I}_i\omega(t)$, where ${\bf I}_i=m_i(x_i^2+y_i^2)$
is the moment of inertia of the body $m_i$ about the $z$-axis. Since the total moment
of inertia, ${\bf I}$, is constant, and $\omega(t)$ is the same for all bodies because
they belong to the same rotating geodesic, it follows that
$\sum_{i=1}^n{\bf I}_i\omega(t)={\bf I}\omega(t)=c,$
where $c$ is a constant. Consequently, $\omega$ is a constant vector.

Moreover, since $L_i^z=c_i$, it follows that ${\bf I}_i\omega(t)=c_i$. Then
every ${\bf I}_i$ is constant, and so is every $z_i,\ i=1,\dots,n$. Hence each
body of mass $m_i$ has a constant $z_i$-coordinate, and all bodies rotate with 
the same constant angular velocity around the $z$-axis, properties that agree
with our definition of an elliptic relative equilibrium.

We now prove the case ${\bf J}=$ constant, i.e.~when the geodesic rotates hyperbolically in ${\bf H}^2$. According
to the definition of $\bf J$, we can assume that the bodies are on a moving geodesic
whose plane contains the $x$ axis for all time and whose vertex slides along
the geodesic hyperbola $x=0$. (This moving geodesic hyperbola can be also
visualized as the intersection between the sheet $z>0$ of the hyperboloid and 
the plane containing the $x$ axis and rotating about it. For an instant, 
this plane also contains the $z$ axis.)

The angular momentum of each body is ${\bf L}_i=m_i{\bf q}_i\boxtimes\dot{\bf q}_i$, so we can show as before that its derivative takes the form $\dot{\bf L}_i=m_i{\bf q}_i\boxtimes{\overline\nabla_{{\bf q}_i}} U_{-1}({\bf q})$. Again, ${\overline\nabla_{{\bf q}_i}} U_{-1}({\bf q})$ is either zero or orthogonal to ${\bf q}_i$. In the former case we can draw the same conclusion as earlier, that 
$\dot{\bf L}_i={\bf 0}$, so in particular $\dot{L}_i^x=0$. In the latter case, ${\bf q}_i$ 
and ${\overline\nabla_{{\bf q}_i}} U_{-1}({\bf q})$ are in the plane of the moving hyperbola, so their cross product, ${\bf q}_i\boxtimes{\overline\nabla_{{\bf q}_i}} U_{-1}({\bf q})$
(which differs from the standard cross product only by its opposite $z$ component), 
is orthogonal to the $x$ axis, and therefore $\dot{L}_i^x=0$. Thus $\dot{L}_i^x=0$ in either case.

From here the proof proceeds as before by replacing $\bf I$ with $\bf J$ and the $z$ axis with the $x$ axis, and noticing that $L_i^x={\bf J}_i\omega(t)$, to
show that every $m_i$ has a constant $x_i$ coordinate. In other words, each body is moving along a (in general non-geodesic) hyperbola given by the intersection of the hyperboloid with a plane orthogonal to the $x$ axis. These facts in combination with the sliding of the moving geodesic hyperbola along the fixed geodesic hyperbola $x=0$ 
are in agreement with our definition of a hyperbolic relative equilibrium.
\end{proof}


\section{Appendix}

\subsection{The Weierstrass model}

Since the Weierstrass model of the hyperbolic (Bolyai-Lobachevsky) plane is little 
known, we will present here its basic properties. This model appeals for at least 
two reasons: (i) it allows an obvious comparison with the sphere, both from the geometric and analytic point of view; (ii) it emphasizes  the differences between 
the Bolyai-Lobachevsky and the Euclidean plane as clearly as the well-known
differences between the Euclidean plane and the sphere. As far as we are 
concerned, this model was the key for obtaining the results we proved for
the $n$-body problem for $\kappa<0$.

The Weierstrass model is constructed on one of the sheets of the hyperboloid 
$x^2+y^2-z^2=-1$ in the 3-dimensional Minkowski space 
$\mathcal{M}^3:=({\bf R}^3, \boxdot)$, in which  ${\bf a}\boxdot{\bf b}=a_xb_x+a_yb_y-a_zb_z$, with ${\bf a}=(a_x,a_y,a_z)$ and ${\bf b}=(b_x,b_y,b_z)$,
represents the Lorentz inner product. We choose the $z>0$ sheet of the hyperboloid, which we identify with the Bolyai-Lobachevsky plane ${\bf H}^2$.

A linear transformation $T\colon\mathcal{M}^3\to\mathcal{M}^3$ is 
orthogonal if $T({\bf a})\boxdot T({\bf a})={\bf a}\boxdot{\bf a}$ for any
${\bf a}\in\mathcal{M}^3$. The set of these transformations, together with
the Lorentz inner product, forms the orthogonal group
$O(\mathcal{M}^3)$, given by matrices of determinant $\pm1$.
Therefore the group $SO(\mathcal{M}^3)$ of orthogonal transformations of 
determinant 1 is a subgroup of $O(\mathcal{M}^3)$. Another subgroup
of $O(\mathcal{M}^3)$ is $G(\mathcal{M}^3)$, which is formed by
the transformations $T$ that leave ${\bf H}^2$ invariant. Furthermore, $G(\mathcal{M}^3)$ has the closed Lorentz subgroup, ${\rm Lor}(\mathcal{M}^3):= G(\mathcal{M}^3) \cap SO(\mathcal{M}^3)$. 
 
An important result is the Principal Axis Theorem for  ${\rm Lor}(\mathcal{M}^3)$, \cite{Dillen}, \cite{Nomizu}. Let us define the Lorentzian rotations about an axis 
as  the 1-parameter subgroups of  ${\rm Lor}(\mathcal{M}^3)$ that leave the axis pointwise fixed. Then the Principal Axis Theorem states that every Lorentzian transformation has one of the forms: 
$$
A=P\begin{bmatrix}
\cos\theta & -\sin\theta & 0 \\ 
\sin\theta & \cos\theta & 0 \\ 
0 & 0 & 1 
\end{bmatrix} P^{-1},
$$
$$A=P\begin{bmatrix}
1 & 0 & 0 \\ 
0 & \cosh s & \sinh s \\ 
0 & \sinh s & \cosh s 
\end{bmatrix}P^{-1}, 
$$  
or 
$$A=P\begin{bmatrix}
1 & -t & t \\ 
t & 1-t^2/2 & t^2/2 \\ 
t & -t^2/2 & 1+t^2/2 
\end{bmatrix}P^{-1}, 
$$  
where $\theta\in[0,2\pi)$, $s, t\in{\bf R}$, and  $P\in {\rm Lor}(\mathcal{M}^3)$. 
These transformations are called elliptic, hyperbolic, and parabolic, respectively. The elliptic transformations are rotations about a {\it timelike} axis---the $z$ axis in our case; hyperbolic rotations are rotations about a {\it spacelike} axis---the $x$ axis in our context; and parabolic transformations are rotations about 
a {\it lightlike} (or null) axis, represented here by the line  $x=0$, $y=z$.
This result resembles Euler's Principal Axis Theorem, which states that any element of $SO(3)$ can be written, in some orthonormal basis, as a rotation about the $z$ axis.

The geodesics of ${\bf H}^2$ are the hyperbolas obtained by intersecting the 
hyperboloid with planes passing through the origin of the coordinate system.
For any two distinct points ${\bf a}$ and ${\bf b}$ of ${\bf H}^2$, there is a unique geodesic that connects them, and the distance between these points is given by 
$d({\bf a},{\bf b})=\cosh^{-1}(-{\bf a}\boxdot{\bf b})$.

In the framework of Weierstrass's model, the parallels' postulate of hyperbolic geometry can be translated as follows. Take a geodesic $\gamma$, i.e.~a hyperbola obtained by intersecting a plane through the origin, $O$, of the coordinate system with the upper sheet, $z>0$, of the hyperboloid. This hyperbola has two asymptotes in its plane: the straight lines $a$ and $b$, intersecting at $O$. Take a point, $P$, on the upper sheet of the hyperboloid but not on the chosen hyperbola. The plane $aP$ produces the geodesic hyperbola $\alpha$, whereas $bP$ produces $\beta$. These two hyperbolas intersect at $P$. Then $\alpha$ and $\gamma$ are parallel geodesics meeting at infinity along $a$, while $\beta$ and $\gamma$ are parallel geodesics meeting at infinity along $b$. All the hyperbolas between $\alpha$ and $\beta$ (also obtained from planes through $O$) are non-secant with $\gamma$.

Like the Euclidean plane, the abstract Bolyai-Lobachevsky plane has no privileged points or geodesics. But the Weierstrass model has some convenient points and 
geodesics, such as the point $(0,0,1)$ and the geodesics passing through it. The elements of ${\rm Lor}(\mathcal{M}^3)$ allow us to move the geodesics of ${\bf H}^2$ to convenient positions, a property we frequently use in this paper to simplify our arguments. Other properties of the Weierstrass model can be found in \cite{Fab} and \cite{Rey}. The Lorentz group is treated in some detail in \cite{Bak}, but the Principal Axis Theorems for the Lorentz group contained in \cite{Bak} and \cite{Rey} fails to include parabolic rotations, and is therefore incomplete.

\subsection{History of the model}

The first researcher who mentioned Karl Weierstrass in connection with the hyperboloidal model of the Bolyai-Lobachevsky plane was Wilhelm Killing. In a paper published in 1880, \cite{Kil1}, he used what he called Weierstrass's coordinates to describe the ``exterior hyperbolic plane'' as an ``ideal region'' of the Bolyai-Lobachevsky plane. In 1885, he added that Weierstrass had introduced these coordinates, in combination with ``numerous applications,''  during a seminar held in 1872,  \cite{Kil2}, pp.~258-259. We found no evidence of any written account of the hyperboloidal model for the Bolyai-Lobachevsky plane prior to the one Killing gave in a paragraph of \cite{Kil2}, p.~260. His remarks might have inspired Richard Faber to name this model after Weierstrass and to dedicate a chapter to it in \cite{Fab}, pp.~247-278.

\bigskip

\noindent{\bf Acknowledgments.} We would like to thank Sergey Bolotin, Alexey Borisov, Eduardo Pi\~{n}a, Don Saari,  Alexey Shchepetilov, and Jeff Xia for discussions and suggestions that helped us improve this paper. We also acknowledge the grant support Florin Diacu and Manuele Santoprete received from NSERC (Canada) and Ernesto P\'erez-Chavela from CONACYT (Mexico).


\end{document}